\numberwithin{equation}{section}
\newtheorem{thm}{Theorem}[section]
\newtheorem{lemma}[thm]{Lemma}
\newtheorem{pro}[thm]{Proposition}
\newtheorem{corollary}[thm]{Corollary}
\newtheorem{rem}[thm]{Remark}
\newcommand{\be}{\begin{equation}}
\newcommand{\ee}{\end{equation}}
\newcommand{\bea}{\begin{eqnarray*}}
\newcommand{\eea}{\end{eqnarray*}}
\begin{document}
\date{}
\title{\textbf{Global existence and blow-up for the variable coefficient Schr\"{o}dinger equations with a linear potential  \footnote{2020 Mathematics Subiect Classification: 35Q55, 35B44.
}}}
\author{Bowen Zheng\footnote{E-mail: \emph{bwen\_zj1516@126.com} (B. Zheng).},\quad \  Tohru Ozawa \footnote{Corresponding author: \emph{txozawa@waseda.jp} (T. Ozawa).
}
\\\emph{College of Sciences, China Jiliang University},\\ \emph{Hangzhou 310018, P. R. China},\\
\emph{Department of Applied Physics, Waseda University},\\ \emph{Tokyo 169-8555, Japan}} \maketitle

\begin{abstract}
In this paper, we study a class of variable coefficient Schr\"{o}dinger equations with a linear potential
\[i\partial_tu+\nabla\cdot(|x|^b\nabla u)-V(x)u=-|x|^c|u|^pu,\]
where $2-n<b\leq0,\ c\geq b-2$ and $0<\textbf{p}_c\leq(2-b)(p+2)$, where $\textbf{p}_c:=np-2c$.
In the radial or finite variance case, we firstly prove the global existence and blow-up below the ground state threshold for the mass-critical and inter-critical nonlinearities. Next, adopting the variational method of Ibrahim-Masmoudi-Nakanishi \cite{IMN}, we obtain a sufficient condition on the nonradial initial data, under which the global behavior of the general solution is established.

\end{abstract}

\section{Introduction}
In this paper, we consider the Cauchy problem for the variable coefficient Schr\"{o}dinger equation with a linear potential (INLS$_{b,V}$ for short)
\begin{equation}\label{a0}
\left\{
\begin{aligned}
 & i\partial_tu+\nabla\cdot(|x|^b\nabla u)-V(x)u=-|x|^c|u|^pu,\quad\ (x, t)\in\Bbb{R}^n\times[0, +\infty),  \\
 &u(x,0)=u_0(x),
\end{aligned}\right.
\end{equation}
where $u:\ \Bbb{R}^n\times[0, +\infty)\rightarrow\Bbb{C},\ 2-n<b<2,\ c\geq b-2,\ 0<\textbf{p}_c:=np-2c\leq(2-b)(p+2)$ and the potential $V(x)\geq0$. 
The INLS$_{b,V}$ appears in the study of standing waves problem for the following degenerate elliptic equation:
\begin{equation}\nonumber
\nabla\cdot(|x|^b\nabla \phi)-V(x)\phi=-|x|^c|\phi|^p\phi,
\end{equation}
of which the properties of nontrivial solution have attracted a lot of interests from the mathematical community (see e.g.  \cite{DLY}, \cite{G3}, \cite{G4}, \cite{M}-\cite{SW2}, \cite{ZYS} and references therein).

We define the functional adapted to the Schr\"{o}dinger operator
\[\mathcal{A}_{b, V}:=-\nabla\cdot(|x|^b\nabla)+V(x)\]
by
\[\|u\|_{\dot{H}_{b,V}^1}^2:=\langle\mathcal{A}_{b,V}u,\ u\rangle=\int(|x|^b|\nabla u|^2+V(x)|u|^2)dx.\]

The INLS$_{b}$ (case $V=0$) enjoys a scaling invariance under the scaling $u\mapsto u_\lambda$, where
\begin{equation}\label{k7}
u_\lambda(x, t):=\lambda^{\frac{2-b+c}{p}} u(\lambda x, \lambda^{2-b}t), \quad \ \lambda>0.
\end{equation}
A direct calculation gives
\begin{equation}\nonumber
\|u_\lambda(\cdot,0)\|_{\dot{H}^{s}}=\lambda^{s+\frac{2-b+c}{p}-\frac{n}{2}}\|u_0\|_{\dot{H}^{s}},
\end{equation}
which shows that \eqref{k7} leaves the the usual homogeneous Sobolev $\dot{H}^{s_c}$-norm of the initial data invariant with the critical Sobolev index
\[s_c:=\frac{n}{2}-\frac{2-b+c}{p}.\]
The mass-critical case corresponds to $s_c=0$ (\emph{equivalently $\textbf{p}_c=2(2-b)$});
The energy-critical case corresponds to $s_c=\frac{2-b}{2}$ (\emph{equivalently $\textbf{p}_c=(2-b)(p+2)$}). The intercritical case corresponds to $s_c\in(0, \frac{2-b}{2})$ (\emph{equivalently $\textbf{p}_c\in(2(2-b),\ (2-b)(p+2))$}). For later uses, it is convenient to introduce the following exponent:
\begin{equation}\label{k6}
\sigma:=\frac{2-b-2s_c}{2s_c}=\frac{(2-b)(p+2)-\textbf{p}_c}{\textbf{p}_c-4+2b}.
\end{equation}

The main purpose of the present paper is to study long time dynamics (global existence, finite or infinite time blow-up) of the radial and non-radial solutions to the INLS$_{b,V}$.
Since $a(x)=|x|^b$ with $b<0$ can be singular at $x=0$, the local well-posedness analysis of the INLS$_{b,V}$ is more subtle and intricate in several aspects, for example the energy space associated with $\mathcal{A}_{b, V}$ is no longer usual Sobolev space, and Strichartz estimate arguments fails. To our aim, we take the LWP in $W_b^{1,2}$ related to the INLS$_{b,V}$  as an assumption when necessary and build part of our conditional result upon it. This means that we always assume that for any $u_0\in W^{1,2}_b$, the INLS$_{b,V}$  admits a unique local solution $u$ in $C([0, T^\ast);\ W^{1,2}_b)$ with the maximal lifetime $0<T^\ast<+\infty$. If
\[\lim_{t\rightarrow T^\ast}\|\nabla u(t)\|_{b,2}=\infty,\]
we call such $u$ is the finite time (\emph{resp. infinite time}) blow-up solution when $T^\ast<+\infty$ (\emph{resp. $T^\ast=+\infty$}).
In addition, the solutions to INLS$_{b,V}$ formally conserve their mass and energy (see e.g. \cite{CH}), 
\begin{eqnarray}
&&M(u(t)):=\int|u|^2dx=M(u_0),\\\label{f5}
&&E_{b,V}(u(t)):=\frac{1}{2}\|\nabla u\|^2_{b,2}+\frac{1}{2}\int V(x)|u|^2dx-\frac{1}{p+2}\|u\|^{p+2}_{c,p+2}=E_{b,V}(u_0).\quad
\end{eqnarray}

Let us review some recent development for the INLS$_{b,V}$. When $b=0$ and $V=0$,  \eqref{a0} reduces to the inhomogeneous nonlinear Schr\"{o}dinger equation (INLS)
\[ i\partial_tu+\Delta u=-|x|^c|u|^pu.\]
In this case, the local well-posedness in $H^1$ of the solutions were established by Geneoud-Stuart \cite{GS4}, whose proof makes use of the energy method developed by Cazenave \cite{TC}. See also \cite{GC4} for other proof based on Strichartz estimates argument. In the mass-critical case, Genoud \cite{G5} showed that the INLS with $\max\{-2, -n\}<c<0$ is globally well-posedness in $H^1$ by assuming $u_0\in H^1$ and $\|u_0\|_2<\|\mathcal{Q}\|_2$, where $\mathcal{Q}$ is the unique positive radial solution to the elliptic equation
\[\Delta \mathcal{Q}-\mathcal{Q}+|x|^c|\mathcal{Q}|^p\mathcal{Q}=0.\]
This result was also extended by Farah \cite{F4} to the intercritical case, and he also showed that the solution blows up in finite time (see also \cite{D5}).

When $b=0$ and $V\neq0$, the global existence and blow-up for the INLS$_{V}$ equation
\[i\partial_tu+\Delta u-Vu=-|x|^c|u|^pu\]
were already extensively studied in recent years. Under the assumptions that
$V\in\mathcal{K}_0\cap L^{\frac{3}{2}}$ and $\|V_-\|_{\mathcal{K}}:=\sup_{x\in\Bbb{R}^3}\int\frac{|V_-(y)|}{|x-y|}dy<4\pi,$ Hong \cite{H1} studied the
INLS$_{V}$ and established the local well-posedness in $H^1$. In case $c=0$, Hamano-Ikeda \cite{HI} proved that the intercritical INLS$_{V}$ in three dimension with $V\geq0,\ L^{\frac{3}{2}}\ni x\cdot\nabla V\leq0$ is globally well-posed in $H^1$ if $u_0\in H^1$ and satisfies
\begin{equation}\nonumber
E_{V}(u_0)M(u_0)^{\delta_c}<E_{0}(\mathcal{Q})M(\mathcal{Q})^{\delta_c},\quad \|\nabla u_0\|_{2}\|u_0\|_2^{\delta_c}<\|\nabla \mathcal{Q}\|_{2}\|\mathcal{Q}\|_2^{\delta_c},
\end{equation}
where $\delta_c:=\frac{1-\beta_c}{\beta_c}$ with $\beta_c:=\frac{3}{2}-\frac{2}{p}$.
They also proved that if $2V+x\cdot\nabla V\geq0$,
\begin{equation}\nonumber
E_{V}(u_0)M(u_0)^{\delta_c}<E_{0}(\mathcal{Q})M(\mathcal{Q})^{\delta_c},\quad \|\nabla u_0\|_{2}\|u_0\|_2^{\delta_c}>\|\nabla \mathcal{Q}\|_{2}\|\mathcal{Q}\|_2^{\delta_c},
\end{equation}
and, in addition, either $|x|u_0\in L^2$ or $u_0$ is radial, then the solution blows up in finite time. Recently, the global existence and blow-up criteria for INLS$_{V}$ in inhomogeneous case $-1<c<0$ were also studied by Guo-Wang-Yao \cite{GWY} (case $p=2$) and Dinh \cite{VDD2} (case $\frac{4+2c}{3}<p<4+2c$).
More results on other type of potential have also been addressed, see e.g. Killip-Murphy-Visan-Zheng \cite{KMVZ}, Dinh-Keraani \cite{DK}, An-Kim-Jang \cite{AKJ} for the inverse-power potential $V(x)=\frac{d}{|x|^\sigma}$, or Gustafson-Inui \cite{GI}, Ikeda-Inui \cite{II} for repulsive delta potential.

As we see, the global existence and blow-up for the INLS or INLS$_{V}$ equations have been studied extensively, of which the dispersion coefficient are both $b=0$. While when $b\neq0$, the more general variable coefficient Schr\"{o}dinger equation \eqref{a0} or even for $V=0$ is less understood. Such problem become more subtle in the presence of the dispersion coefficient $|x|^b$.
Motivated by the aforementioned works, we are mainly interested in studying the global existence and blow-up behavior for the focusing INLS$_{b,V}$ in case with $2-n<b<2$ and $2(2-b)\leq\textbf{p}_c<(2-b)(p+2)$.

In this paper, we pose some additional assumptions on the potential $V(x)$ as follows:
\begin{eqnarray}\nonumber
&&(\mbox{\uppercase\expandafter{\romannumeral1}})\quad V(x)\geq0,\quad x\cdot\nabla V+(2-b)V\geq0;\\\nonumber
&&(\mbox{\uppercase\expandafter{\romannumeral2}})\quad x\cdot\nabla V\in L^{\frac{n}{2}}(\Bbb{R}^n; |x|^{-\frac{nb}{2}}dx);\\\nonumber
&&(\mbox{\uppercase\expandafter{\romannumeral3}})\quad x\cdot\nabla V\leq0;\\\nonumber
&&(\mbox{\uppercase\expandafter{\romannumeral4}})\quad x\cdot\nabla^2 Vx^T\leq-(3-b)x\cdot\nabla V;\quad \quad \quad \quad \quad \quad \quad
\end{eqnarray}
where $\nabla^2 V$ is the Hessian matrix of $V$.

Our first goal here is to establish a sufficient condition for the global existence and blow-up of $W_b^{1,2}$-solution to the INLS$_{b,V}$ for radial or finite variance initial data. 
\begin{thm}\label{thmm2}
(Mass-critical case) Let $n\geq3,\ 2-n<b\leq0$, $c\geq b-2,\ \emph{\textbf{p}}_c=2(2-b)$ and let $V(x)$ satisfy (\mbox{\uppercase\expandafter{\romannumeral1}}). Suppose $u$ is the solution to the INLS$_{b,V}$ with initial data $u_0\in W_b^{1,2}$.

(i) If $\|u_0\|_2<\|Q_{1}\|_2$ and $u_0$ is radial, where $Q_{1}$ is the ground state of the following elliptic equation
\begin{equation}\label{c1}
\nabla\cdot(|x|^b\nabla Q_{\omega})-\omega Q_{\omega}+|x|^c|Q_{\omega}|^pQ_{\omega}=0
\end{equation}
with $\omega=1$, then $u$ exists globally and $\sup_{t\in[0, +\infty)}\|u\|_{W_b^{1,2}}<\infty$;

(ii) If $E_{b,V}(u_0)<0$ and $u_0\in L^2(|x|^{2-b}dx)$, then the solution $u$ blows up in finite time.
\end{thm}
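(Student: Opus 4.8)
The plan is to treat the two parts by the standard Weinstein/Glassey dichotomy, adapted to the operator $\mathcal{A}_{b,V}$ and to the inhomogeneous weight $|x|^c$. For part (i), the key input is a sharp Gagliardo–Nirenberg-type inequality associated with $\mathcal{A}_{b,V}$: there is a best constant $C_{GN}$ such that
\[
\|u\|_{c,p+2}^{p+2}\leq C_{GN}\,\|\nabla u\|_{b,2}^{p}\,\|u\|_2^{2},
\]
valid for radial $u$ (since the ground state $Q_1$ of \eqref{c1} is radial and realizes this constant on the radial subspace), with $C_{GN}$ expressed through $\|Q_1\|_2$ via the Pohozaev/Nehari identities for \eqref{c1}; in the mass-critical case the exponent on $\|u\|_2$ is exactly $2$, so the precise statement is $C_{GN}=\frac{p+2}{2}\|Q_1\|_2^{-p}$ (up to the normalization fixed by $\omega=1$). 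First I would record this inequality, then plug it into the energy identity \eqref{f5}: since $V\geq0$ by assumption (\uppercase\expandafter{\romannumeral1}),
\[
E_{b,V}(u)\geq \frac12\|\nabla u\|_{b,2}^2-\frac{1}{p+2}C_{GN}\|\nabla u\|_{b,2}^{p}\|u\|_2^{2}
=\frac12\|\nabla u\|_{b,2}^2\Bigl(1-\Bigl(\tfrac{\|u_0\|_2}{\|Q_1\|_2}\Bigr)^{p}\Bigr),
\]
using mass conservation $\|u(t)\|_2=\|u_0\|_2$. Under $\|u_0\|_2<\|Q_1\|_2$ the bracket is a fixed positive constant, so $\|\nabla u(t)\|_{b,2}^2\leq \frac{2}{1-(\|u_0\|_2/\|Q_1\|_2)^p}E_{b,V}(u_0)$ is bounded uniformly in $t$; combined with the conserved mass this gives $\sup_t\|u\|_{W^{1,2}_b}<\infty$, and the blow-up alternative (the assumed LWP in $W^{1,2}_b$ with the criterion $\lim_{t\to T^\ast}\|\nabla u(t)\|_{b,2}=\infty$) forces $T^\ast=+\infty$.

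For part (ii), the plan is a virial/convexity argument. I would introduce the variance $I(t):=\int |x|^{2-b}|u(x,t)|^2\,dx$, which is finite at $t=0$ by hypothesis $u_0\in L^2(|x|^{2-b}dx)$ and finite for $t\in[0,T^\ast)$ by a standard truncation/propagation argument. Differentiating twice and using the equation \eqref{a0}, one gets $I'(t)$ proportional to a momentum-type quantity and
\[
I''(t)=\alpha\,\|\nabla u\|_{b,2}^2-\beta\,\|u\|_{c,p+2}^{p+2}+(\text{potential terms involving }x\cdot\nabla V),
\]
for explicit positive constants $\alpha,\beta$ depending on $n,b,c$. The crucial algebraic point is that, because $\mathbf{p}_c=2(2-b)$ is exactly the mass-critical exponent, the right-hand side collapses to a multiple of the energy: $I''(t)=A\,E_{b,V}(u(t))+(\text{terms with }x\cdot\nabla V)$ with $A>0$; and assumption (\uppercase\expandafter{\romannumeral1}) in the form $x\cdot\nabla V+(2-b)V\geq0$ is exactly what is needed to show the potential contribution to $I''$ is $\leq 0$ (i.e.\ it only helps the blow-up). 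Hence $I''(t)\leq A\,E_{b,V}(u_0)<0$ by energy conservation and the hypothesis $E_{b,V}(u_0)<0$, so $I(t)$ is a nonnegative function whose second derivative is bounded above by a negative constant — forcing $I$ to reach zero in finite time, which is impossible unless $T^\ast<\infty$. Thus the solution blows up in finite time.

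The main obstacle I expect is the rigorous computation and justification of the virial identity in this degenerate, weighted setting: the weight $|x|^b$ is singular at the origin when $b<0$, the energy space is not the usual $H^1$, and standard Strichartz-based regularization is unavailable (as the authors note). So the variance $I(t)$ must be handled via a cutoff $\chi_R(x)|x|^{2-b}$, differentiating the regularized quantity, controlling the commutator and boundary terms as $R\to\infty$ using only $W^{1,2}_b$ a priori bounds and the structure of $\mathcal{A}_{b,V}$, and checking that the weight $|x|^{2-b}$ is the correct one matching the operator (this is why the power $2-b$, not $2$, appears in the finite-variance hypothesis). A secondary technical point is establishing the sharp weighted Gagliardo–Nirenberg inequality with the ground-state constant on the radial class — this presumably follows from the variational characterization of $Q_1$ together with a symmetric-decreasing rearrangement adapted to the weights, or is quoted from the cited elliptic literature; I would cite it and reduce part (i) to it rather than reprove it here.
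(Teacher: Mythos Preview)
Your approach is essentially identical to the paper's: part (i) via the sharp radial Gagliardo--Nirenberg inequality and energy conservation, part (ii) via the virial identity for the weighted variance $\int |x|^{2-b}|u|^2\,dx$ combined with Glassey's convexity argument, using assumption (\uppercase\expandafter{\romannumeral1}) in the form $x\cdot\nabla V+(2-b)V\ge0$ to discard the potential contribution. The paper indeed justifies the virial identity by an $\varepsilon$-regularization $e^{-2\varepsilon|x|^{2-b}}|x|^{2-b}$ (its Corollary~2.8), exactly the cutoff strategy you anticipate.

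One minor correction: you have the exponents in the Gagliardo--Nirenberg inequality swapped. In the mass-critical case $\mathbf{p}_c=2(2-b)$ the inequality reads
\[
\|u\|_{c,p+2}^{p+2}\le C_{GN}\,\|\nabla u\|_{b,2}^{\,2}\,\|u\|_2^{\,p},
\]
not $\|\nabla u\|_{b,2}^{\,p}\|u\|_2^{\,2}$ as you wrote (the kinetic exponent is $\mathbf{p}_c/(2-b)=2$, and the mass exponent is $p$). Your value $C_{GN}=\tfrac{p+2}{2}\|Q_1\|_2^{-p}$ is correct, and with the right exponents the computation
\[
E_{b,V}(u)\ge\tfrac12\|\nabla u\|_{b,2}^2\Bigl(1-(\|u_0\|_2/\|Q_1\|_2)^p\Bigr)
\]
goes through exactly as you state; with the exponents as you wrote them it would not factor this way unless $p=2$.
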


\begin{thm}\label{cor1}
(Intercritical case) Let $n\geq3,\ 2-n<b\leq0$, $c\geq b-2,$ $2(2-b)< \emph{\textbf{p}}_c\leq(2-b)(p+2)$ and let $V(x)$ satisfy (\mbox{\uppercase\expandafter{\romannumeral1}})-(\mbox{\uppercase\expandafter{\romannumeral2}}). Let $u(t)$ be the solution to  the INLS$_{b,V}$ with initial data $u_0\in W_b^{1,2}$ and $E_{b,V}(u_0)M(u_0)^\sigma<E_{b}(Q_{1})M(Q_{1})^\sigma$.

(i) If $\|u_0\|_{\dot{H}_{b, V}^1}\|u_0\|_2^\sigma<\|\nabla Q_{1}\|_{b,2}\|Q_{1}\|_2^\sigma$ and $u_0$ is radial, then $u$ exists globally and
\begin{equation}\nonumber
\|u\|_{\dot{H}_{b, V}^1}\|u\|_2^\sigma<\|\nabla Q_{1}\|_{b,2}\|Q_{1}\|_2^\sigma \quad\quad\mbox{for\ any}\ t\in [0, +\infty);
\end{equation}

(ii) If $\|u_0\|_{\dot{H}_{b, V}^1}\|u_0\|_2^\sigma>\|\nabla Q_{1}\|_{b,2}\|Q_{1}\|_2^\sigma$, and either $u_0\in L^2(|x|^{2-b}dx)$ or $u_0$ is radial (in this case, we further assume that $p<4$), then the solution $u(t)$ blows up in finite time and
\begin{equation}\label{c13}
\|u\|_{\dot{H}_{b, V}^1}\|u\|_2^\sigma>\|\nabla Q_{1}\|_{b,2}\|Q_{1}\|_2^\sigma \quad\quad\mbox{for\ any}\ t\in [0, T^\ast).
\end{equation}
\end{thm}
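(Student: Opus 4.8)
The plan is to adapt the concentration–compactness / rigidity-free variational argument of Holmer–Roudenko type, combined with a virial identity, to the variable-coefficient setting.

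\medskip
\noindent\textbf{Step 1: Sharp Gagliardo–Nirenberg inequality and the ground state.} First I would establish the sharp weighted Gagliardo–Nirenberg inequality
\[
\|u\|_{c,p+2}^{p+2}\leq C_{\mathrm{GN}}\,\|\nabla u\|_{b,2}^{\frac{\mathbf{p}_c}{2-b}}\,\|u\|_2^{p+2-\frac{\mathbf{p}_c}{2-b}},
\]
with the optimal constant attained by the ground state $Q_1$ of \eqref{c1}, and express $C_{\mathrm{GN}}$ in terms of $\|\nabla Q_1\|_{b,2}$ and $\|Q_1\|_2$ via the Pohozaev/Nehari identities for \eqref{c1}. (This is presumably done in an earlier section; I would simply invoke it.) The scaling exponent $\sigma$ in \eqref{k6} is exactly the one that makes the product $\|u\|_{\dot H^1_{b,V}}\|u\|_2^\sigma$ and the energy–mass product $E_{b,V}(u_0)M(u_0)^\sigma$ scale-invariant, so the hypotheses are the natural ``below-threshold'' conditions.

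\medskip
\noindent\textbf{Step 2: The coercivity / trapping argument for part (i).} Using $V\geq 0$ (assumption (I)) we have $\|\nabla u\|_{b,2}\le\|u\|_{\dot H^1_{b,V}}$, and also $E_{b,V}(u)\ge E_b(u):=\frac12\|\nabla u\|_{b,2}^2-\frac{1}{p+2}\|u\|_{c,p+2}^{p+2}$. Plugging the sharp GN inequality into $E_b$ and writing $y(t):=\|\nabla u(t)\|_{b,2}\|u(t)\|_2^\sigma$, one obtains a function $f(y)=\tfrac12 y^2 - \tfrac{C}{?}\,y^{\mathbf p_c/(2-b)}\,(\cdots)$ whose shape (increasing then decreasing, since $\mathbf p_c/(2-b)>2$ in the intercritical range) together with the conservation law $E_{b,V}(u)M(u)^\sigma \le E_{b,V}(u_0)M(u_0)^\sigma < E_b(Q_1)M(Q_1)^\sigma = f(y_{\max})$ confines $y(t)$ to the region $y(t)<\|\nabla Q_1\|_{b,2}\|Q_1\|_2^\sigma$ for all $t$, by continuity of $y(t)$ in $C([0,T^*);W^{1,2}_b)$ and the initial strict inequality. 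This gives the a priori bound on $\|\nabla u(t)\|_{b,2}$ and hence $T^*=+\infty$. Radiality is not really needed for (i) — it is harmless to keep it, and it is used only to control the remainder in Step 3; but I would state (i) in a way parallel to the mass-critical theorem. For the bound on the full $\dot H^1_{b,V}$-norm one also uses assumption (I) in the form $x\cdot\nabla V+(2-b)V\ge0$ only if needed through the virial functional; actually $V\ge0$ alone suffices to close (i).

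\medskip
\noindent\textbf{Step 3: The virial/Morawetz argument for part (ii).} For the blow-up statement I would use the (localized) virial identity for the generator of the scaling \eqref{k7}. In the finite-variance case, with $\mathcal V(t):=\int |x|^{2-b}|u(t,x)|^2\,dx$, a computation using the equation gives $\mathcal V'(t)=4\,\mathrm{Im}\!\int |x|^{2-b}\bar u\,x\cdot\nabla u\,dx$ (up to a constant) and then
\[
\mathcal V''(t)\;=\;c_1\Big(\,(2-b)\|\nabla u\|_{b,2}^2 - \tfrac{\mathbf p_c}{p+2}\|u\|_{c,p+2}^{p+2}\,\Big)\;-\;c_2\!\int\!\big(x\cdot\nabla V + (2-b)V\big)|u|^2\,dx,
\]
where the sign of the potential term is controlled by assumption (I) (it is $\le 0$, hence helps blow-up). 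Rewriting the bracket as $(2-b)\big(\text{something}\big) = (2-b)\,\mathbf p_c\, E_{b,V}(u) - (\cdots)\|\nabla u\|_{b,2}^2 + (\cdots)\int V|u|^2$ and invoking the energy–mass subthreshold hypothesis together with the \emph{reversed} GN/trapping inequality $y(t)>\|\nabla Q_1\|_{b,2}\|Q_1\|_2^\sigma$ (which propagates for all $t\in[0,T^*)$ by the same continuity argument as in Step 2, using that the initial data lies in the ``outer'' well), one shows $\mathcal V''(t)\le -\delta<0$ for some fixed $\delta$, forcing $\mathcal V(t)$ to reach $0$ in finite time — impossible unless $T^*<\infty$. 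In the radial non-finite-variance case I would instead use a truncated weight $\varphi_R(x)$ with $\varphi_R(x)=|x|^{2-b}$ for $|x|\le R$; the error terms are estimated by the radial Strauss-type decay, and the restriction $p<4$ enters precisely to absorb the nonlinear remainder $\int_{|x|>R}|x|^c|u|^{p+2}$ against the kinetic energy (this is the analogue of the Ogawa–Tsutsumi argument).

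\medskip
\noindent\textbf{Main obstacle.} The technically delicate point is Step 3 in the radial case: deriving the localized virial identity for the degenerate operator $\nabla\cdot(|x|^b\nabla\cdot)$ with the singular/variable weight $|x|^b$, and controlling the commutator and cut-off error terms near $x=0$ where $|x|^b$ blows up (since $b<0$). One must choose the truncation and the multiplier compatibly with the geometry induced by $|x|^b$ (effectively working with the metric $|x|^{b}dx$ rather than the Euclidean one), verify that the boundary/error terms are genuinely lower order, and check that assumption (II) — the integrability of $x\cdot\nabla V$ in the weighted space $L^{n/2}(|x|^{-nb/2}dx)$ — is exactly what is needed to make the potential contribution to $\mathcal V''$ well-defined and negligible in the limit $R\to\infty$. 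A secondary subtlety is making the ``trapping'' dichotomy rigorous when only $W^{1,2}_b$ well-posedness (and not a full Strichartz theory) is assumed, but since $y(t)$ is continuous along the $C([0,T^*);W^{1,2}_b)$ flow and can never equal the threshold value (that would contradict the strict energy inequality via the sharp GN inequality), the standard bootstrap goes through.
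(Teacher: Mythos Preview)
Your proposal is essentially correct and follows the same Holmer--Roudenko strategy as the paper: sharp weighted Gagliardo--Nirenberg with optimal constant expressed via the ground state $Q_1$, a trapping/continuity argument for (i), and a (localized) virial argument for (ii). Two points deserve correction or comment.

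First, your remark that ``radiality is not really needed for (i)'' is wrong in this setting. The sharp constant $C_{\mathrm{GN}}$ in the weighted Gagliardo--Nirenberg inequality \eqref{a19} is established in the paper only for \emph{radial} functions; for general $u\in W^{1,2}_b$ one has only the non-sharp Caffarelli--Kohn--Nirenberg inequality \eqref{g7}, and symmetry breaking is a genuine phenomenon in this family. Without the sharp radial constant the trapping argument would not produce the exact threshold $\|\nabla Q_1\|_{b,2}\|Q_1\|_2^\sigma$. Note also that the paper runs the trapping on $\|u\|_{\dot H^1_{b,V}}\|u\|_2^\sigma$ rather than on $\|\nabla u\|_{b,2}\|u\|_2^\sigma$, since that is the quantity in the statement.

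Second, for the radial blow-up the paper does not close by the direct Ogawa--Tsutsumi/Glassey argument you sketch. The localized virial estimate (Lemma~\ref{lem2}) carries a remainder $(2b+C\epsilon+o_R(1))\|\nabla u\|_{b,2}^2$, and this has a favourable sign only when $b<0$ strictly. The paper instead runs a two-stage ODE argument: from $P(u)\le-\delta$ one first obtains $I'_{\psi_R}(t)\lesssim -t$ and hence the lower bound $\|\nabla u(t)\|_{b,2}\gtrsim t$; then, rewriting $4(2-b)P(u)$ via energy conservation to extract a dominant $-c\|\nabla u\|_{b,2}^2$ term, one gets $I''_{\psi_R}\le-\delta_0\|\nabla u\|_{b,2}^2$ and closes with the Riccati inequality $f'\ge Af^2$ for $f(t)=\int_{T_1}^t\|\nabla u(s)\|_{b,2}^2\,ds$. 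Your direct approach would work at least when $b<0$, but be aware the paper takes this more elaborate route.
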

\begin{rem}
 When $u_0$ is radial, the restriction $p<4$ is added to control the nonlinear term $\mathcal{R}_3$ in the localized virial estimate (see Lemma \ref{lem2}).\
\end{rem}

The main tools for proving Theorems \ref{thmm2} and \ref{cor1} are the coercivity given by the virial identities related to the following virial functional
\begin{equation}\label{c4}
P(u):=\|\nabla u\|^2_{b,2}-\frac{1}{2-b}\int (x\cdot\nabla V)|u|^2dx-\frac{\textbf{p}_c}{(2-b)(p+2)}\|u\|^{p+2}_{c,p+2}
\end{equation}
and sharp radial Gagliardo-Nirenberg type estimate.
How to derive the uniform bounds of the functional $P$ in the presence of space-dependent coefficient $|x|^b$ and the term $x\cdot\nabla V$ is the main difficulty we encounter. For this reason, our argument will be more complicated.

In the second part, we aim to extend the global existence and blow-up properties to the INLS$_{b,V}$ with general initial data (not necessarily radial or finite variance). It is conjectured that if a general solution to the INLS$_{b,V}$ satisfies
\begin{equation}\label{}
\sup_{t\in [0, T^\ast)}P(u)\leq-\delta,
\end{equation}
then it blows up. However, there is no affirmative answer on this conjecture up to data for the INLS$_{b,V}$.
To overcome it, we will replace the threshold $E_{b}(Q_{1})M(Q_{1})^\sigma$ in Theorem \ref{cor1} and classify the initial data by using the ground state $Q_{\omega}$ to the elliptic equation \eqref{c1}, which mainly adopts the variational method of Ibrahim-Masmoudi-Nakanishi \cite{IMN}.

To state it, we introduce several notations now. We define the action functional
\begin{eqnarray}\nonumber
\lefteqn{S_{\omega,V}(\phi):=E_{b, V}(\phi)+\frac{\omega}{2}M(\phi)}\\\label{g1}
&&\quad\quad =\frac{1}{2}\|\nabla \phi\|^2_{b,2}+\frac{1}{2}\int V(x)|\phi|^2dx+\frac{\omega}{2}\|\phi\|^2_2-\frac{1}{p+2}\|\phi\|^{p+2}_{c,p+2}.\quad
\end{eqnarray}
By a scaling transformation $\phi\mapsto\phi_\lambda^{\alpha, \beta}$, where
\begin{equation}\label{g35}
\phi_\lambda^{\alpha, \beta}(x):=e^{\alpha\lambda}\phi(e^{\beta\lambda}x),\quad x\in\Bbb{R}^n,
\end{equation}
we define the first order derivative of $S_{\omega,V}(\phi_\lambda^{\alpha, \beta})$ at $\lambda=0$ by $K_{\omega,V}^{\alpha,\beta}(\phi)$
\begin{eqnarray}\nonumber
\lefteqn{K_{\omega,V}^{\alpha,\beta}(\phi):=\frac{\partial}{\partial\lambda}S_{\omega,V}(\phi_\lambda^{\alpha, \beta})\mid_{\lambda=0}}\\\nonumber
&&\quad\quad =\frac{2\alpha+(2-b-n)\beta}{2}\|\nabla\phi\|^2_{b,2}+\frac{2\alpha-n\beta}{2}(\int V(x)|\phi|^2dx+\omega\|\phi\|^2_2)\\\label{g3}
&&\quad\quad\quad\ -\frac{\beta}{2}\int (x\cdot\nabla V)|\phi|^2dx-\frac{\alpha(p+2)-(n+c)\beta}{p+2}\|\phi\|^{p+2}_{c,p+2}.
\end{eqnarray}
In particular, when $(\alpha, \beta)=(n,2)$ and $(\alpha,\beta)=(1,0)$,
\begin{eqnarray}\nonumber
&&P(\phi):=\frac{1}{2-b}K_{\omega,V}^{n,2}(\phi)\\\nonumber
&&\quad\quad\quad=\|\nabla \phi\|^2_{b,2}-\frac{1}{2-b}\int (x\cdot\nabla V)|\phi|^2dx-\frac{\textbf{p}_c}{(2-b)(p+2)}\|\phi\|^{p+2}_{c,p+2},\\\nonumber
&&I_{\omega,V}(\phi):=K_{\omega,V}^{1,0}(\phi)=\|\nabla \phi\|^2_{b,2}+\int V(x)|\phi|^2dx+\omega\|\phi\|^2_2-\|\phi\|^{p+2}_{c,p+2},\quad\quad\quad
\end{eqnarray}
where $P$ is the virial functional defined in \eqref{c4}, and $I_{\omega,V}$ is called Nehari functional.

For each $(\alpha, \beta)\in\Bbb{R}^2$ in the range
\begin{equation}\label{g2}
\alpha>0,\quad \beta\geq0,\quad 2\alpha-n\beta\geq0
\end{equation}
and the elliptic equation \eqref{c1}, we consider the following constrained minimizing problem for nonradial function
\begin{equation}\label{g6}
m_{\omega,V}^{\alpha,\beta}=\inf\{S_{\omega,V}(\phi):\ \phi\in W_b^{1,2}\backslash\{0\},\ K_{\omega,V}^{\alpha,\beta}(\phi)=0\}.
\end{equation}
As in \cite{ZZ1}, the minimizing problem $m_{\omega,0}^{1,0}$ is proved to be attained by the ground state $Q_{\omega}$. However, the more general case $m_{\omega,0}^{\alpha,\beta}$ for $(\alpha, \beta)$ satisfying \eqref{g2} seems to be unknown in the study of stability analysis, which is essential in our proof of global existence and blow-up of the INLS$_{b,V}$.

In order to do so, we first investigate some properties of the ground state $Q_{\omega}$ and make a comparison on the minimization problems between $m_{\omega,0}^{\alpha,\beta}$ and $m_{\omega,V}^{\alpha,\beta}$.
\begin{pro}\label{prop1}
Let $n\geq3,\ 2-n<b<2,\ b-2<c\leq\min\{\frac{nb}{n-2},\ 0\},\ \omega>0,\ 2(2-b)<\emph{\textbf{p}}_c<(2-b)(p+2)$ and let $(\alpha, \beta)$ satisfy \eqref{g2}.

(i)  If $V=0$, then $m_{\omega,0}^{\alpha,\beta}$ is attained by the ground state $Q_{\omega}$ of the elliptic equation \eqref{c1};

(ii) If $V(x)$ satisfies (\uppercase\expandafter{\romannumeral1}) and (\uppercase\expandafter{\romannumeral3}), then $m_{\omega,0}^{\alpha,\beta}\leq m_{\omega,V}^{\alpha,\beta}$.
\end{pro}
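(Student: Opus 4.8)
\medskip
\noindent\textbf{Plan of proof.} Both parts exploit the exact homogeneity of $S_{\omega,V}$ under the scaling \eqref{g35}. Set $a_1:=2\alpha+(2-b-n)\beta$, $a_2:=2\alpha-n\beta$, $a_3:=\alpha(p+2)-(n+c)\beta$; then \eqref{g2} together with $2(2-b)<\textbf{p}_c<(2-b)(p+2)$ and $c\le 0$ gives $a_3>0$, $0<a_1<a_3$ and $0\le a_2<a_3$ (with $a_3-a_1=\alpha p-(c+2-b)\beta$, $a_3-a_2=\alpha p-c\beta$), so that for every $\phi\neq0$ the map
\[
\lambda\longmapsto S_{\omega,0}(\phi_\lambda^{\alpha,\beta})=\tfrac12 e^{a_1\lambda}\|\nabla\phi\|_{b,2}^2+\tfrac\omega2 e^{a_2\lambda}\|\phi\|_2^2-\tfrac1{p+2}e^{a_3\lambda}\|\phi\|_{c,p+2}^{p+2}
\]
has a unique, nondegenerate maximum; in particular the constraint sets defining $m_{\omega,0}^{\alpha,\beta}$ and $m_{\omega,V}^{\alpha,\beta}$ are nonempty. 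Eliminating $\|\phi\|_{c,p+2}^{p+2}$ from $K_{\omega,V}^{\alpha,\beta}(\phi)=0$ produces the identity
\[
S_{\omega,V}(\phi)=\frac{a_3-a_1}{2a_3}\|\nabla\phi\|_{b,2}^2+\frac1{2a_3}\int\big[(a_3-a_2)V+\beta\,x\cdot\nabla V\big]|\phi|^2\,dx+\frac{a_3-a_2}{2a_3}\,\omega\|\phi\|_2^2,
\]
and likewise its $V\equiv0$ version on the set $\{K_{\omega,0}^{\alpha,\beta}(\phi)=0\}$.

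For part (i), the bound $m_{\omega,0}^{\alpha,\beta}\le S_{\omega,0}(Q_\omega)$ is immediate: $Q_\omega$ solves \eqref{c1}, hence is a critical point of $S_{\omega,0}$, hence $K_{\omega,0}^{\alpha,\beta}(Q_\omega)=\partial_\lambda S_{\omega,0}((Q_\omega)_\lambda^{\alpha,\beta})|_{\lambda=0}=0$ for \emph{every} $(\alpha,\beta)$, so $Q_\omega$ lies in the constraint set and $S_{\omega,0}(Q_\omega)=m_{\omega,0}^{1,0}$ by \cite{ZZ1}. For the reverse inequality let $K_{\omega,0}^{\alpha,\beta}(\phi)=0$ and write $X=\|\nabla\phi\|_{b,2}^2$, $Y=\omega\|\phi\|_2^2$, so that $\|\phi\|_{c,p+2}^{p+2}=\tfrac{p+2}{2a_3}(a_1X+a_2Y)$ and $S_{\omega,0}(\phi)=\tfrac{a_3-a_1}{2a_3}X+\tfrac{a_3-a_2}{2a_3}Y$. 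The sharp Gagliardo--Nirenberg inequality $\|\phi\|_{c,p+2}^{p+2}\le C_\ast\|\nabla\phi\|_{b,2}^{\textbf{p}_c/(2-b)}\|\phi\|_2^{p+2-\textbf{p}_c/(2-b)}$ --- whose extremisers, by \cite{ZZ1}, are the rescalings of $Q_\omega$ --- then forces $a_1X+a_2Y\le\kappa\,X^{a/2}Y^{d/2}$ with $a=\textbf{p}_c/(2-b)\in(2,p+2)$, $d=p+2-a$. Minimising the linear form $\tfrac{a_3-a_1}{2a_3}X+\tfrac{a_3-a_2}{2a_3}Y$ over this region reduces, on its boundary curve, to a one--variable minimisation in the ratio $r=X/Y$ whose stationarity condition is a positive multiple of a quadratic in $r$ having a single positive root; since the linear form tends to $+\infty$ at both ends of the curve, that root is the minimiser, and it coincides with the point where Gagliardo--Nirenberg is saturated and $K_{\omega,0}^{\alpha,\beta}=0$, namely $Q_\omega$. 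Therefore $S_{\omega,0}(\phi)\ge S_{\omega,0}(Q_\omega)$, whence $m_{\omega,0}^{\alpha,\beta}=S_{\omega,0}(Q_\omega)$ is attained by $Q_\omega$. (Alternatively, a minimiser $\phi_\ast$ of $m_{\omega,0}^{\alpha,\beta}$ can be produced directly in $W_b^{1,2}$ by concentration--compactness, using the two--parameter scaling to restore compactness; one then kills the Lagrange multiplier in $S_{\omega,0}'(\phi_\ast)=\theta\,(K_{\omega,0}^{\alpha,\beta})'(\phi_\ast)$ by pairing with $\partial_\lambda(\phi_\ast)_\lambda^{\alpha,\beta}|_{\lambda=0}$ and using the nondegeneracy of the scaling maximum, so that $\phi_\ast$ solves \eqref{c1} and $S_{\omega,0}(\phi_\ast)\ge S_{\omega,0}(Q_\omega)$.)

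For part (ii), observe that $V\ge0$ forces $S_{\omega,V}\ge S_{\omega,0}$ pointwise, and that $2\alpha-n\beta\ge0$, $V\ge0$, $x\cdot\nabla V\le0$ (from \eqref{g2} and (III)) force $K_{\omega,V}^{\alpha,\beta}(\psi)\ge K_{\omega,0}^{\alpha,\beta}(\psi)$ for every $\psi\in W_b^{1,2}$. Fix $\phi$ with $K_{\omega,V}^{\alpha,\beta}(\phi)=0$. Transferring the nonnegative quantities $\tfrac{a_2}2\int V|\phi|^2\,dx$ and $-\tfrac\beta2\int(x\cdot\nabla V)|\phi|^2\,dx$ in $K_{\omega,V}^{\alpha,\beta}(\phi)=0$ gives $\tfrac{a_3}{p+2}\|\phi\|_{c,p+2}^{p+2}\ge\tfrac{a_1}2\|\nabla\phi\|_{b,2}^2+\tfrac{a_2}2\omega\|\phi\|_2^2$. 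Let $t_\ast>0$ be the unique number with $K_{\omega,0}^{\alpha,\beta}(t_\ast\phi)=0$, i.e. $t_\ast^p=\big(\tfrac{a_1}2\|\nabla\phi\|_{b,2}^2+\tfrac{a_2}2\omega\|\phi\|_2^2\big)\big/\big(\tfrac{a_3}{p+2}\|\phi\|_{c,p+2}^{p+2}\big)$; the inequality just obtained gives $t_\ast\le1$. Using the $V\equiv0$ identity for $t_\ast\phi$, then the identity for $S_{\omega,V}(\phi)$ together with hypothesis (I) (which makes $(a_3-a_2)V+\beta\,x\cdot\nabla V\ge(a_3-a_1)V\ge0$), we get
\[
S_{\omega,0}(t_\ast\phi)=t_\ast^2\Big(\tfrac{a_3-a_1}{2a_3}\|\nabla\phi\|_{b,2}^2+\tfrac{a_3-a_2}{2a_3}\omega\|\phi\|_2^2\Big)\le\tfrac{a_3-a_1}{2a_3}\|\nabla\phi\|_{b,2}^2+\tfrac{a_3-a_2}{2a_3}\omega\|\phi\|_2^2\le S_{\omega,V}(\phi).
\]
Since $K_{\omega,0}^{\alpha,\beta}(t_\ast\phi)=0$ we conclude $m_{\omega,0}^{\alpha,\beta}\le S_{\omega,0}(t_\ast\phi)\le S_{\omega,V}(\phi)$; taking the infimum over admissible $\phi$ yields $m_{\omega,0}^{\alpha,\beta}\le m_{\omega,V}^{\alpha,\beta}$.

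The genuinely delicate point is the reverse inequality in (i): it rests on the sharp weighted Gagliardo--Nirenberg inequality in $W_b^{1,2}$ with the singular weights $|x|^b,|x|^c$ and on the identification of its extremisers with the rescalings of $Q_\omega$ (equivalently, on the attainment of $m_{\omega,0}^{1,0}$ by $Q_\omega$ from \cite{ZZ1}); in the self-contained variant the difficulty moves to proving existence of a minimiser for the $(\alpha,\beta)$--constrained problem, the obstruction being the two--parameter scaling non-compactness. The sign bookkeeping for $a_3-a_1$, $a_3-a_2$, $a_2$ and for $(a_3-a_2)V+\beta\,x\cdot\nabla V$ over the admissible ranges of the parameters and under (I), (III) is essential but routine.
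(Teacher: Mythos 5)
Your part (ii) is correct and is essentially the paper's argument in different clothing: where you rescale $\phi\mapsto t_\ast\phi$ with $t_\ast\le1$ to land on the $V=0$ constraint set and compare actions directly, the paper introduces the auxiliary functional $J_{\omega,V}^{\alpha,\beta}=S_{\omega,V}-\frac{1}{2\alpha+(2-b-n)\beta}K_{\omega,V}^{\alpha,\beta}$, proves $m_{\omega,V}^{\alpha,\beta}=\inf\{J_{\omega,V}^{\alpha,\beta}:K_{\omega,V}^{\alpha,\beta}\le0\}$, and uses $J_{\omega,0}^{\alpha,\beta}\le J_{\omega,V}^{\alpha,\beta}$ together with $K_{\omega,0}^{\alpha,\beta}(\phi)\le K_{\omega,V}^{\alpha,\beta}(\phi)=0$. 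Your sign bookkeeping $(a_3-a_2)V+\beta\,x\cdot\nabla V=(a_3-a_1)V+\beta[(2-b)V+x\cdot\nabla V]\ge0$ under (I) checks out, and the two routes are interchangeable.

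Part (i) as you mainly present it has a genuine gap. The minimization \eqref{g6} is over \emph{all} of $W_b^{1,2}\setminus\{0\}$, but the sharp Gagliardo--Nirenberg inequality \eqref{a19} with constant $C_{GN}$ expressed through $Q_\omega$ is stated in this paper only for \emph{radial} $f$; for nonradial functions the paper only records the non-sharp version \eqref{g7} with an unidentified constant. Your reverse inequality hinges on the constraint $a_1X+a_2Y\le\kappa X^{a/2}Y^{d/2}$ with the \emph{sharp} $\kappa$ saturated exactly by $Q_\omega$, applied to an arbitrary nonradial competitor; that the sharp nonradial constant coincides with the radial one, with extremizers the rescalings of $Q_\omega$, is essentially equivalent to the statement being proved (it is exactly what attainment of $m_{\omega,0}^{\alpha,\beta}$ by $Q_\omega$ over nonradial functions delivers), so the argument is circular unless that nonradial sharp inequality is independently supplied. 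The finite-dimensional minimization on the boundary curve is also only asserted, not carried out, for general $(\alpha,\beta)$ in \eqref{g2}. Your parenthetical alternative is in fact the paper's actual route, but it is the part carrying all the weight and you leave it as a one-line sketch: the paper first proves existence of a minimizer (Proposition \ref{pro2}) using boundedness of minimizing sequences via the functional $\widetilde{J}_{\omega,0}^{\alpha,\beta}$ (Lemma \ref{lemma5}), nontriviality of the weak limit via the positivity of $K_{\omega,0}^{\alpha,\beta}$ near the origin (Lemma \ref{lemma1}), and --- crucially --- the compact embedding $W_b^{1,2}\hookrightarrow L^{p+2}(|x|^cdx)$ of Lemma \ref{lemma6}, which is where the hypothesis $c\le\frac{nb}{n-2}$ enters and which replaces concentration--compactness because the weights destroy translation invariance; it then kills the Lagrange multiplier using $\mathcal{L}_{\alpha,\beta}^2S_{\omega,0}(\phi)<0$ and identifies the minimizer set with the ground-state set $\mathcal{G}_\omega$ in both inclusions. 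None of these steps is present in your write-up, so part (i) is not established as written.
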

Proposition \ref{prop1} shows that $m_{\omega,0}^{\alpha,\beta}$ is independent of $(\alpha, \beta)$. From now on, we express $m_{\omega,0}:=m_{\omega,0}^{\alpha,\beta}$ for simplicity.

Now by defining two subsets in $W_b^{1,2}$ as follows:
\begin{equation}\label{g23}
\mathcal{N}^+=\{\phi\in W_b^{1,2}: \ S_{\omega,V}(\phi)<m_{\omega,0},\quad K_{\omega,V}^{n,2}(\phi)\geq0\}
\end{equation}
and
\begin{equation}\label{g24}
\mathcal{N}^-=\{\phi\in W_b^{1,2}: \ S_{\omega,V}(\phi)<m_{\omega,0},\quad K_{\omega,V}^{n,2}(\phi)<0\},
\end{equation}
we prove the global existence and blow-up of the nonradial solution to the INLS$_{b,V}$, whose action is less than $m_{\omega,0}$.
\begin{thm}\label{thmm1}
Let $n\geq3,\ 2-n<b<2,\ b-2<c\leq0,\ \omega>0,\ 2(2-b)<\emph{\textbf{p}}_c<(2-b)(p+2)$ and let $V(x)$ satisfy (\uppercase\expandafter{\romannumeral1})-(\uppercase\expandafter{\romannumeral4}).
Let $u\in C([0, T^\ast); W_b^{1,2})$ be the local solution to the INLS$_{b,V}$ with initial data $u_0\in W_b^{1,2}$.

(i) If $u_0\in \mathcal{N}^+$, then the solution exists globally and $u(t)\in \mathcal{N}^+$ for any $t\in[0, +\infty)$.

(ii) If $u_0\in \mathcal{N}^-$ and additionally
\[\quad\quad c\leq b\leq0,\quad\quad \emph{\textbf{p}}_c\leq\frac{2c(2-b)}{b},\]
then $u(t)\in \mathcal{N}^-$ for any $t\in[0, T^\ast)$ and one of the following finite/infinite time blow-up statement holds true:

\quad (1) $T^\ast<+\infty$ and $\lim_{t\rightarrow T^\ast}\|\nabla u(t)\|_{b,2}=+\infty$;

\quad (2) $T^\ast=+\infty$ and there exists a time sequence $\{t_n\}$ such that $t_n\rightarrow+\infty$ and
\[\lim_{n\rightarrow +\infty}\|\nabla u(t_n)\|_{b,2}=+\infty.\]
\end{thm}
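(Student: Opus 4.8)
The plan is to run the Ibrahim--Masmoudi--Nakanishi variational scheme along the scaling direction $(\alpha,\beta)=(n,2)$, for which $K_{\omega,V}^{n,2}=(2-b)P$ is a scalar multiple of the virial functional. The central object is
\[
\mathcal{J}_{\omega,V}(\phi):=S_{\omega,V}(\phi)-\frac{1}{\textbf{p}_c}\,K_{\omega,V}^{n,2}(\phi),
\]
and a direct computation, in which the nonlinear terms cancel, gives
\[
\mathcal{J}_{\omega,V}(\phi)=\Big(\frac12-\frac{2-b}{\textbf{p}_c}\Big)\|\nabla\phi\|_{b,2}^2+\frac12\int V|\phi|^2\,dx+\frac{1}{\textbf{p}_c}\int (x\cdot\nabla V)|\phi|^2\,dx+\frac{\omega}{2}\|\phi\|_2^2 .
\]
Since $\textbf{p}_c>2(2-b)$, condition (I) makes the two potential integrals combine into a nonnegative quantity, so $\mathcal{J}_{\omega,V}\ge0$ and in fact $\mathcal{J}_{\omega,V}(\phi)\ge\big(\tfrac12-\tfrac{2-b}{\textbf{p}_c}\big)\|\nabla\phi\|_{b,2}^2$. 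The first step is the dynamical invariance of $\mathcal{N}^{\pm}$: $E_{b,V}$ and $M$ are conserved, hence so is $S_{\omega,V}$, which preserves the inequality $S_{\omega,V}(u(t))=S_{\omega,V}(u_0)<m_{\omega,0}$; and the sign of $K_{\omega,V}^{n,2}(u(t))$ cannot change, since otherwise continuity together with $u(t)\neq0$ (mass conservation) would produce a time $t_0$ with $u(t_0)\neq0$ and $K_{\omega,V}^{n,2}(u(t_0))=0$, whence $S_{\omega,V}(u(t_0))\ge m_{\omega,V}^{n,2}\ge m_{\omega,0}$ by the definition of the constrained minimum and Proposition~\ref{prop1}(ii), contradicting $S_{\omega,V}(u(t_0))<m_{\omega,0}$. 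This already gives the set-membership assertions $u(t)\in\mathcal{N}^{\pm}$ in (i) and (ii).

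For part (i): on $\mathcal{N}^{+}$ we have $K_{\omega,V}^{n,2}(u(t))\ge0$, so $\mathcal{J}_{\omega,V}(u(t))\le S_{\omega,V}(u(t))=S_{\omega,V}(u_0)<m_{\omega,0}$, and the coercivity bound gives $\sup_{t\ge0}\|\nabla u(t)\|_{b,2}^2<m_{\omega,0}\big(\tfrac12-\tfrac{2-b}{\textbf{p}_c}\big)^{-1}$; global existence then follows from the blow-up alternative of the local theory. For part (ii) the first task is a uniform lower bound for $-K_{\omega,V}^{n,2}(u(t))$. Fix $\phi\neq0$ with $K_{\omega,V}^{n,2}(\phi)\le0$ and set $g(\lambda):=S_{\omega,V}(\phi_\lambda^{n,2})$; the kinetic and nonlinear terms of $g$ carry the factors $e^{(4-2b)\lambda}$ and $e^{\textbf{p}_c\lambda}$, so, using condition (III) to discard the potential term and $\textbf{p}_c>2(2-b)$, one sees $g'(\lambda)>0$ for $\lambda$ sufficiently negative while $g'(0)=K_{\omega,V}^{n,2}(\phi)\le0$; hence there is $\lambda_1\le0$ with $K_{\omega,V}^{n,2}(\phi_{\lambda_1}^{n,2})=0$, and therefore $S_{\omega,V}(\phi_{\lambda_1}^{n,2})\ge m_{\omega,0}$. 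The key point is that $\lambda\mapsto\mathcal{J}_{\omega,V}(\phi_\lambda^{n,2})$ is nondecreasing: the kinetic part is visibly increasing, the term $\tfrac12\int V(e^{-2\lambda}\cdot)|\phi|^2$ is nondecreasing by (III), and for the remaining term one differentiates the ray map $s\mapsto(sx)\cdot\nabla V(sx)$ and invokes condition (IV) in the form $\frac{d}{ds}\big[(sx)\cdot\nabla V(sx)\big]\le\frac{b-2}{s}(sx)\cdot\nabla V(sx)$, combined again with (III) and $\textbf{p}_c>2(2-b)$. Evaluating at $\lambda=0\ge\lambda_1$ yields $\mathcal{J}_{\omega,V}(\phi)\ge\mathcal{J}_{\omega,V}(\phi_{\lambda_1}^{n,2})=S_{\omega,V}(\phi_{\lambda_1}^{n,2})\ge m_{\omega,0}$, i.e.
\[
K_{\omega,V}^{n,2}(\phi)\le\textbf{p}_c\big(S_{\omega,V}(\phi)-m_{\omega,0}\big)\qquad\text{whenever }\phi\neq0,\ K_{\omega,V}^{n,2}(\phi)\le0 .
\]
Applying this to $\phi=u(t)\in\mathcal{N}^{-}$ and using conservation of $S_{\omega,V}$ gives $K_{\omega,V}^{n,2}(u(t))\le\textbf{p}_c\big(S_{\omega,V}(u_0)-m_{\omega,0}\big)=:-\delta<0$, equivalently $P(u(t))\le-\delta/(2-b)$, for all $t\in[0,T^\ast)$.

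It remains to deduce the blow-up/grow-up dichotomy, which I would obtain from a localized virial argument (the analogue of Lemma~\ref{lem2} for the weight $|x|^{2-b}$, which is the natural finite-variance weight for this equation). Let $\varphi_R$ be a radial cutoff with $\varphi_R(x)=|x|^{2-b}$ for $|x|\le R$, $\varphi_R$ bounded, and the concavity and derivative bounds dictated by $|x|^{2-b}$, and set $z_R(t)=\int\varphi_R\,|u(t)|^2\,dx\ge0$. The localized virial identity gives
\[
z_R''(t)=4(2-b)\,P(u(t))+\mathrm{Err}_R(t),
\]
where $\mathrm{Err}_R$ is supported in $\{|x|>R\}$ and consists of a mass term of size $O\big(R^{-\kappa}\|u_0\|_2^2\big)$ with $\kappa>0$ (here $2-n<b\le0$ is used), a kinetic term with a favorable sign coming from the concavity of $\varphi_R$, potential corrections controlled by (I) and (IV), and a nonlinear remainder bounded by a constant times $\int_{|x|>R}|x|^c|u|^{p+2}\,dx$. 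Arguing by contradiction, suppose $\sup_{[0,T^\ast)}\|\nabla u(t)\|_{b,2}=:M<\infty$; then $T^\ast=+\infty$ by the blow-up alternative, and a localized weighted Gagliardo--Nirenberg estimate on $\{|x|>R\}$ --- where the hypotheses $c\le b\le0$ and $\textbf{p}_c\le\frac{2c(2-b)}{b}$ enter precisely to convert the excess powers of $|x|$ and of the scale $R$ into a gain $R^{-\kappa'}$, $\kappa'>0$ --- bounds the nonlinear remainder by $C(M)\,R^{-\kappa'}$. Taking $R$ large enough makes $\mathrm{Err}_R(t)\le 2\delta$, hence $z_R''(t)\le 4(2-b)P(u(t))+2\delta\le-2\delta<0$ for all $t\ge0$, so that $z_R(t)\to-\infty$, contradicting $z_R\ge0$. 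Consequently $\limsup_{t\to T^\ast}\|\nabla u(t)\|_{b,2}=+\infty$, which is alternative~(1) when $T^\ast<+\infty$ and alternative~(2), along a sequence $t_n\to+\infty$, when $T^\ast=+\infty$.

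I expect the main obstacle to be the control of the nonlinear remainder in $\mathrm{Err}_R$ for nonradial initial data with no decay hypothesis: in the radial setting this term is the $\mathcal{R}_3$ of Lemma~\ref{lem2} and is handled by the radial Sobolev embedding (whence the restriction $p<4$ in Theorem~\ref{cor1}), whereas here one must exploit the interplay $c\le b\le0$ together with the upper bound $\textbf{p}_c\le\frac{2c(2-b)}{b}$ to extract a genuine negative power of $R$ from a weighted interpolation inequality on the exterior region, while keeping the potential contributions to the virial under control through (IV). A secondary, purely technical, point is to make the scaling computations and the existence of $\lambda_1$ rigorous in the energy space $W_b^{1,2}$, which relies on the integrability assumption (II) for $x\cdot\nabla V$ via a weighted Sobolev inequality.
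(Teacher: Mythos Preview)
Your variational part is sound and close to the paper's: invariance of $\mathcal{N}^{\pm}$ follows exactly as you say (the paper's Proposition~\ref{lemm5} runs the same contradiction through $m_{\omega,V}^{n,2}\ge m_{\omega,0}$), the coercivity of $\mathcal{J}_{\omega,V}$ gives global existence on $\mathcal{N}^{+}$ (the paper packages this as Lemma~\ref{lemma3}), and your monotonicity argument for $\lambda\mapsto\mathcal{J}_{\omega,V}(\phi_\lambda^{n,2})$ yields the quantitative bound $K_{\omega,V}^{n,2}(u(t))\le -\delta$, which the paper obtains instead via the differential inequality $g''(\lambda)\le (4-2b)g'(\lambda)$ in Lemma~\ref{lemma4}. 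These are cosmetic differences.

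The real divergence, and the gap, is in the blow-up deduction. Your plan is to assume $\sup_t\|\nabla u(t)\|_{b,2}<\infty$ and bound the exterior nonlinear remainder $\int_{|x|>R}|x|^c|u|^{p+2}\,dx$ by $C(M)R^{-\kappa'}$ \emph{uniformly in time}; you attribute this to the hypotheses $c\le b\le0$ and $\textbf{p}_c\le\frac{2c(2-b)}{b}$. But you do not say what the ``localized weighted Gagliardo--Nirenberg estimate on $\{|x|>R\}$'' actually is, and for nonradial data there is no Strauss-type spatial decay to manufacture a negative power of $R$ out of the kinetic energy alone. The paper does \emph{not} argue this way. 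Instead it follows Du--Wu--Zhang: one first proves (Lemma~\ref{lem8}) that for any $\eta>0$ the exterior mass $\int_{|x|>R}|u(t)|^2\,dx$ stays below $o_R(1)+\eta$ on a time window $t\in[0,\,\alpha_0 R^{1-b/2}]$; one then runs the localized virial on that window with the nonlinear error controlled by interpolating between a hypothetically bounded weighted $L^q$ norm and the small exterior $L^2$ mass (this is the content of Lemma~\ref{lem7} and the estimate for $\mathcal{R}_3$ there), obtaining $I_{\psi_R}(T(R))<0$ and hence a contradiction. This shows that $\|u(t_n)\|_{\frac{c(q-2)}{p},q}\to\infty$ along a sequence. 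Only \emph{after} this does the condition $\textbf{p}_c\le\frac{2c(2-b)}{b}$ enter: it is exactly the range for which the Hardy--Sobolev inequality of Lemma~\ref{lem11} gives $\|u\|_{\frac{c(q-2)}{p},q}\le C\|\nabla u\|_{b,2}$ for some $q>p+2$, allowing one to upgrade the $L^q$ blow-up to $\|\nabla u(t_n)\|_{b,2}\to\infty$. So the extra restriction in part~(ii) is a Hardy--Sobolev constraint at the final step, not a device to gain powers of $R$ in the virial error as you suggest. Your proposal misplaces this condition and leaves the actual control of the nonradial nonlinear remainder unexplained; the Du--Wu--Zhang mechanism (time-window exterior $L^2$ smallness plus $L^2$--$L^q$ interpolation) is the missing ingredient.
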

\begin{rem}
There is a restriction $\emph{\textbf{p}}_c\leq\frac{2c(2-b)}{b}$ for the blow-up. This restriction is due to the Hardy-Sobolev inequality in \eqref{c26}.
\end{rem}

As a byproduct of the above criteria, we unify the condition $u_0\in\mathcal{N}^-$ with \eqref{g40}, and obtain the following nonradial blow-up result for the INLS$_{b,V}$, which is a version independent of the frequency.
\begin{corollary}\label{thm4}
Let $n,\ b,\ c$, $\emph{\textbf{p}}_c$ and $V(x)$ be the same as in Theorem \ref{thmm1} (ii).
Let $u$ be the corresponding solution to the INLS$_{b,V}$ with initial data $u_0\in W_b^{1,2}$ and satisfying
\begin{equation}
\left\{
\begin{aligned}
 & E_{b,V}(u_0)M(u_0)^\sigma<E_{b}(Q_{1})M(Q_{1})^\sigma,\\\label{g40}
 &\|u_0\|_{\dot{H}_{b, V}^1}\|u_0\|_2^\sigma>\|\nabla Q_{1}\|_{b,2}\|Q_{1}\|_2^\sigma.
\end{aligned}\right.
\end{equation}
Then either $T^\ast<+\infty$, or $T^\ast=+\infty$ and there exists a time sequence $t_n\rightarrow+\infty$ such that $\|\nabla u(t_n)\|_{b,2}\rightarrow+\infty$ as $n\rightarrow +\infty$.
\end{corollary}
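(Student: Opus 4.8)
The plan is to reduce Corollary~\ref{thm4} to Theorem~\ref{thmm1}(ii): all of its structural hypotheses on $n,b,c,\textbf{p}_c,V$ are assumed here, so it is enough to produce a single frequency $\omega>0$ for which the data obeying \eqref{g40} lies in $\mathcal N^-$, i.e. $S_{\omega,V}(u_0)<m_{\omega,0}$ and $K_{\omega,V}^{n,2}(u_0)<0$ (see \eqref{g24}); the conclusion is then precisely the finite/sequential blow-up alternative of Theorem~\ref{thmm1}(ii).

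\emph{Choice of frequency.} The ground states of \eqref{c1} satisfy $Q_\omega(x)=\omega^{\frac{2-b+c}{p(2-b)}}Q_1(\omega^{\frac1{2-b}}x)$, whence $M(Q_\omega)=\omega^{-\frac{2s_c}{2-b}}M(Q_1)$, $\|\nabla Q_\omega\|_{b,2}^2=\omega^{\frac{2-b-2s_c}{2-b}}\|\nabla Q_1\|_{b,2}^2$, $\|Q_\omega\|_{c,p+2}^{p+2}=\omega^{\frac{2-b-2s_c}{2-b}}\|Q_1\|_{c,p+2}^{p+2}$, and hence $E_b(Q_\omega)=\omega^{\frac{2-b-2s_c}{2-b}}E_b(Q_1)$. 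Using the definition \eqref{k6} of $\sigma$ one checks that $E_b(Q_\omega)M(Q_\omega)^\sigma$ and $\|\nabla Q_\omega\|_{b,2}\|Q_\omega\|_2^\sigma$ are independent of $\omega$ and equal the corresponding quantities for $Q_1$. Since $s_c\in(0,\frac{2-b}2)$ in the intercritical range, $\omega\mapsto M(Q_\omega)$ is a strictly decreasing bijection of $(0,+\infty)$ onto itself; as $u_0\neq0$ (otherwise \eqref{g40} cannot hold) we fix the unique $\omega_0>0$ with $M(Q_{\omega_0})=M(u_0)$.

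\emph{Membership in $\mathcal N^-$.} By Proposition~\ref{prop1}(i) and the $(\alpha,\beta)$-independence of $m_{\omega,0}$, one has $m_{\omega_0,0}=m_{\omega_0,0}^{n,2}=S_{\omega_0,0}(Q_{\omega_0})$ with $K_{\omega_0,0}^{n,2}(Q_{\omega_0})=0$, i.e. $\frac{\textbf{p}_c}{(2-b)(p+2)}\|Q_{\omega_0}\|_{c,p+2}^{p+2}=\|\nabla Q_{\omega_0}\|_{b,2}^2$. The first line of \eqref{g40} and the invariance above give $E_{b,V}(u_0)M(u_0)^\sigma<E_b(Q_{\omega_0})M(Q_{\omega_0})^\sigma$, hence $E_{b,V}(u_0)<E_b(Q_{\omega_0})$ (as $M(u_0)=M(Q_{\omega_0})>0$), and adding $\frac{\omega_0}{2}M(u_0)=\frac{\omega_0}{2}M(Q_{\omega_0})$ yields $S_{\omega_0,V}(u_0)<m_{\omega_0,0}$. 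The second line of \eqref{g40}, together with $M(u_0)=M(Q_{\omega_0})$, gives $\|u_0\|_{\dot H_{b,V}^1}^2>\|\nabla Q_{\omega_0}\|_{b,2}^2$. Cancelling the equal mass terms in $S_{\omega_0,V}(u_0)<S_{\omega_0,0}(Q_{\omega_0})$ gives
\[\frac{1}{p+2}\big(\|u_0\|_{c,p+2}^{p+2}-\|Q_{\omega_0}\|_{c,p+2}^{p+2}\big)>\frac12\big(\|u_0\|_{\dot H_{b,V}^1}^2-\|\nabla Q_{\omega_0}\|_{b,2}^2\big)>0,\]
and multiplying by $\frac{\textbf{p}_c}{(2-b)(p+2)}$, inserting the Pohozaev identity above and using $\textbf{p}_c>2(2-b)$, one is led to
\[\|u_0\|_{\dot H_{b,V}^1}^2-\frac{\textbf{p}_c}{(2-b)(p+2)}\|u_0\|_{c,p+2}^{p+2}<\Big(1-\frac{\textbf{p}_c}{2(2-b)}\Big)\big(\|u_0\|_{\dot H_{b,V}^1}^2-\|\nabla Q_{\omega_0}\|_{b,2}^2\big)<0.\]
Finally, assumption (\uppercase\expandafter{\romannumeral1}) gives $-\frac{1}{2-b}\int(x\cdot\nabla V)|u_0|^2\,dx\le\int V|u_0|^2\,dx$, so that $K_{\omega_0,V}^{n,2}(u_0)=(2-b)\big(\|\nabla u_0\|_{b,2}^2-\frac1{2-b}\int(x\cdot\nabla V)|u_0|^2\,dx-\frac{\textbf{p}_c}{(2-b)(p+2)}\|u_0\|_{c,p+2}^{p+2}\big)\le(2-b)\big(\|u_0\|_{\dot H_{b,V}^1}^2-\frac{\textbf{p}_c}{(2-b)(p+2)}\|u_0\|_{c,p+2}^{p+2}\big)<0$. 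Hence $u_0\in\mathcal N^-$ for $\omega=\omega_0$, and Theorem~\ref{thmm1}(ii) completes the proof.

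\emph{Expected obstacle.} The delicate point is exactly the chain that pins the sign $K_{\omega_0,V}^{n,2}(u_0)<0$: one must convert the strict budget $S_{\omega_0,V}(u_0)<m_{\omega_0,0}$ and the gradient gap $\|u_0\|_{\dot H_{b,V}^1}^2>\|\nabla Q_{\omega_0}\|_{b,2}^2$ into the required negativity via the Pohozaev identity for $Q_{\omega_0}$, while absorbing the indefinite term $x\cdot\nabla V$ through assumption (\uppercase\expandafter{\romannumeral1}); one should also keep Proposition~\ref{prop1}(ii), $m_{\omega,0}\le m_{\omega,V}$, in mind, since it is what makes $\mathcal N^-$ the correct set in which to place $u_0$. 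The ground-state rescaling identities, the $\omega$-independence of $E_b(Q_\omega)M(Q_\omega)^\sigma$ and $\|\nabla Q_\omega\|_{b,2}\|Q_\omega\|_2^\sigma$, and the solvability for $\omega_0$ are routine bookkeeping in the exponents $\frac{2s_c}{2-b}$, $\frac{2-b-2s_c}{2-b}$ and $\sigma$.
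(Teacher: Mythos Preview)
Your proof is correct and follows the same overall strategy as the paper: reduce to Theorem~\ref{thmm1}(ii) by exhibiting a frequency $\omega>0$ for which $u_0\in\mathcal N^-$. The two executions differ only in bookkeeping. Where the paper finds $\omega_0$ by maximizing $f(\omega)=S_{\omega,0}(Q_\omega)-S_{\omega,V}(u_0)$ via calculus and then checks that $f(\omega_0)>0$ is equivalent to the first line of \eqref{g40}, you go straight to the choice $M(Q_{\omega_0})=M(u_0)$ using the scaling of $Q_\omega$; a short computation shows these are in fact the \emph{same} $\omega_0$, so your route is simply a more transparent way of landing on it. For the sign of $K_{\omega_0,V}^{n,2}(u_0)$, the paper rewrites $P(u_0)$ through the energy identity $\|u_0\|_{\dot H_{b,V}^1}^2-\frac{\textbf{p}_c}{(2-b)(p+2)}\|u_0\|_{c,p+2}^{p+2}=\frac{\textbf{p}_c}{2-b}E_{b,V}(u_0)-(\frac{\textbf{p}_c}{4-2b}-1)\|u_0\|_{\dot H_{b,V}^1}^2$ and then inserts the two hypotheses of \eqref{g40} separately, while you extract a lower bound on $\|u_0\|_{c,p+2}^{p+2}-\|Q_{\omega_0}\|_{c,p+2}^{p+2}$ from the action inequality and combine it with Pohozaev; both manipulations use exactly the same ingredients (assumption~(\uppercase\expandafter{\romannumeral1}), \eqref{g40}, Pohozaev) and are algebraically equivalent. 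One small wording fix: when you say ``multiplying by $\frac{\textbf{p}_c}{(2-b)(p+2)}$'' you should first clear the $\frac{1}{p+2}$ on the left (i.e., multiply by $\frac{\textbf{p}_c}{2-b}$); the displayed conclusion is correct as written.
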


This paper is organized as follows. In Section 2, we are devoted to preparatory materials, including the weighted Sobolev embedding and virial estimates adapted to the INLS$_{b,V}$.
In Section 3, we prove the global existence and blow-up for the INLS$_{b,V}$ with radial initial data given in Theorems \ref{thmm2} and \ref{cor1}. In Section 4, we show the variational analysis given in Proposition \ref{prop1}. Finally, the global existence and blow-up for nonradial solution are given in Section 5.

\textbf{Notations.} We use $L^q(\Bbb{R}^n; \omega(x)dx)$ to denote the weighted Lebesgue space, which is defined via
\[\|u\|_{L^q(\Bbb{R}^n;\ \omega(x)dx)}=(\int\omega(x)|u|^qdx)^{\frac{1}{q}}.\]
To shorten formulas, we often abbreviate $L^q(\Bbb{R}^n;\ \omega(x)dx)$ by $L^q(\omega(x)dx)$, the norm $\|\cdot\|_{L^q(\Bbb{R}^n; |x|^adx)}$ by $\|\cdot\|_{a,q}$. In particular, we denote $\|\cdot\|_{q}=\|\cdot\|_{0,q}$.

We also define by the weighted Sobolev space $W_a^{s,q}(\Bbb{R}^n)$ by the completion of $C_0^\infty(\Bbb{R}^n)$ with respect to the norm
\[\|u\|_{W_{a}^{s,q}}=\|(-\Delta)^{\frac{s}{2}} u\|_{a,q}+\|u\|_{q}\quad\mbox{and}\quad  \|u\|_{\dot{W}_{a}^{s,q}}=\|(-\Delta)^{\frac{s}{2}} u\|_{a,q}.\]
We make the usual modifications when $q$ equals to $2$.
In particular, $H^s=W_0^{s,2},\ \dot{H}^s=\dot{W}_{0}^{s,2},$ where $H^s$ (\emph{or $\dot{H}^s$}) are usual inhomogeneous (\emph{or homogeneous}) Sobolev spaces. We denote $\delta_{ij}=1$ if $i=j$ and $0$ if $i\neq j$. If not specified, we use $C_\alpha$ to denote various constants which depend on $\alpha$.

\section{Preliminary}
\subsection{Key tools}
The following sharp Gagliardo-Nirenberg inequality for radial functions $f\in W^{1,2}_{b}$ can be found in \cite{ZOZ}.
\begin{lemma}
Let $2-n<b\leq0$ and let one of the following conditions
\begin{eqnarray}\label{g42}
&&(1)\ \ b-2\leq c\leq0:\quad\  -2c<\emph{\textbf{p}}_c<(2-b)(p+2),\\\label{b6}
&&(2)\ \ c>0:\quad\quad\quad\quad\ \frac{(2-b)p}{2}<\emph{\textbf{p}}_c<(2-b)(p+2)\quad\quad\quad\quad\quad\quad
\end{eqnarray}
holds. Then for any radial function $f\in W^{1,2}_{b}$, we have
\begin{equation}\label{a19}
\|f\|_{c,p+2}^{p+2}\leq C_{GN}\|\nabla f\|_{b,2}^{\frac{\emph{\textbf{p}}_c}{2-b}}\|f\|_{2}^{\frac{(2-b)(p+2)-\emph{\textbf{p}}_c}{2-b}},
\end{equation}
where $C_{GN}=\left(\frac{\omega\emph{\textbf{p}}_c}{(2-b)(p+2)-\emph{\textbf{p}}_c}\right)^{1-\frac{\emph{\textbf{p}}_c}{2(2-b)}}\frac{(2-b)(p+2)}{\emph{\textbf{p}}_c\|Q_\omega\|_2^p}$.
The equality is attained by $Q_{\omega}$, which is the ground state solution to the elliptic equation \eqref{c1}.

\end{lemma}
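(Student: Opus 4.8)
The plan is to realize the optimal constant by a Weinstein-type variational argument. Introduce the scale-invariant quotient
\[
J(f):=\frac{\|\nabla f\|_{b,2}^{\frac{\textbf{p}_c}{2-b}}\,\|f\|_{2}^{\frac{(2-b)(p+2)-\textbf{p}_c}{2-b}}}{\|f\|_{c,p+2}^{p+2}},\qquad f\in W_b^{1,2}\ \text{radial},\ f\neq0,
\]
so that, once the two Sobolev exponents are checked to be positive in the range \eqref{g42}--\eqref{b6}, the inequality \eqref{a19} is exactly $J_{\min}:=\inf J>0$ with $C_{GN}=J_{\min}^{-1}$. A direct computation shows $J$ is invariant under the two-parameter rescaling $f\mapsto \mu f(\nu\,\cdot)$, $\mu,\nu>0$: the $\mu$-powers cancel because the Sobolev exponents sum to $p+2$, and the $\nu$-powers cancel precisely because $\textbf{p}_c=np-2c$.

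First I would invoke the weighted radial Sobolev embedding $W^{1,2}_{b,\mathrm{rad}}\hookrightarrow L^{p+2}(|x|^c\,dx)$ (a Caffarelli--Kohn--Nirenberg / Strauss-type bound valid exactly in the range \eqref{g42}--\eqref{b6}), which already gives $J_{\min}>0$, together with its compact version: every bounded radial sequence in $W^{1,2}_b$ has a subsequence converging strongly in $L^{p+2}(|x|^c\,dx)$. Given a minimizing sequence $\{f_k\}$, use the scaling invariance to normalize $\|\nabla f_k\|_{b,2}=\|f_k\|_2=1$; then $\{f_k\}$ is bounded in $W_b^{1,2}$, so along a subsequence $f_k\rightharpoonup f_\ast$ in $W_b^{1,2}$ and $f_k\to f_\ast$ in $L^{p+2}(|x|^c\,dx)$. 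Strong convergence forces $\|f_\ast\|_{c,p+2}^{p+2}=J_{\min}^{-1}>0$, hence $f_\ast\neq0$, and weak lower semicontinuity of the two quadratic norms gives $J(f_\ast)\le J_{\min}$, so $f_\ast$ attains the infimum. I expect this compactness/normalization step to be the principal obstacle, since $|x|^b$ is singular at the origin and degenerate in the gradient (as $b\le0$), so the contributions near $x=0$ and near $|x|=\infty$ must be controlled carefully — or the embedding simply quoted from the literature.

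Next I would identify the extremizer. Writing the Euler--Lagrange equation for $J$ at $f_\ast$ (two Lagrange multipliers coming from the two normalizations) gives $-\nabla\cdot(|x|^b\nabla f_\ast)+\lambda_1 f_\ast=\lambda_2|x|^c|f_\ast|^pf_\ast$ with $\lambda_1,\lambda_2>0$; a further rescaling $f_\ast\mapsto \mu f_\ast(\nu\,\cdot)$ normalizes the multipliers so that the equation becomes exactly \eqref{c1} with $\omega=1$ (general $\omega$ then follows by the same scaling). Since $Q_\omega$ is by definition the ground state of \eqref{c1}, it coincides up to these symmetries with $f_\ast$, so equality in \eqref{a19} is achieved at $Q_\omega$.

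Finally I would pin down $C_{GN}$ from two identities enjoyed by $Q_\omega$. Pairing \eqref{c1} with $Q_\omega$ and integrating by parts yields the Nehari relation $\|\nabla Q_\omega\|_{b,2}^2+\omega\|Q_\omega\|_2^2=\|Q_\omega\|_{c,p+2}^{p+2}$, i.e.\ $K_{\omega,0}^{1,0}(Q_\omega)=0$. Differentiating $S_{\omega,0}(\phi_\lambda^{\alpha,\beta})$ at $\lambda=0$ along the scaling \eqref{g35} and using that $Q_\omega$ is a critical point of the action gives $K_{\omega,0}^{\alpha,\beta}(Q_\omega)=0$ for every $(\alpha,\beta)$; choosing in \eqref{g3} any $(\alpha,\beta)$ independent of $(1,0)$ (the Pohozaev identity) produces a second linear relation among $\|\nabla Q_\omega\|_{b,2}^2$, $\omega\|Q_\omega\|_2^2$ and $\|Q_\omega\|_{c,p+2}^{p+2}$. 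Solving the resulting $2\times2$ system gives
\[
\|\nabla Q_\omega\|_{b,2}^2=\frac{\omega\,\textbf{p}_c}{(2-b)(p+2)-\textbf{p}_c}\,\|Q_\omega\|_2^2,\qquad
\|Q_\omega\|_{c,p+2}^{p+2}=\frac{(2-b)(p+2)}{\textbf{p}_c}\,\|\nabla Q_\omega\|_{b,2}^2,
\]
and substituting these into $J(Q_\omega)^{-1}$ — whereupon the two powers of $\|Q_\omega\|_2$ combine to $\|Q_\omega\|_2^{-p}$ — yields precisely the stated formula for $C_{GN}$.
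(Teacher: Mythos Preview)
The paper does not prove this lemma at all: it simply quotes the result from \cite{ZOZ} and records the Pohozaev identities \eqref{a3} in a remark. So there is no ``paper's own proof'' to compare against, and your sketch stands on its own.

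Your outline is the standard Weinstein variational argument and is correct in structure. The scale invariance of $J$, the normalization of a minimizing sequence, weak lower semicontinuity of the quadratic pieces plus strong $L^{p+2}(|x|^c\,dx)$ convergence to rule out vanishing, and the Euler--Lagrange identification with \eqref{c1} are all set up properly. Your Pohozaev computation is also right and matches \eqref{a3} exactly, and plugging those relations into $J(Q_\omega)^{-1}$ does collapse the $\|Q_\omega\|_2$ powers to $\|Q_\omega\|_2^{-p}$ and reproduce the stated $C_{GN}$.

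The one place where your sketch is honest but thin is the radial compact embedding $W^{1,2}_{b,\mathrm{rad}}\hookrightarrow\hookrightarrow L^{p+2}(|x|^c\,dx)$ in the full range \eqref{g42}--\eqref{b6}. Note that the paper's own compactness result, Lemma~\ref{lemma6}, is for nonradial functions and carries the extra restriction $c\le \frac{nb}{n-2}$, which is narrower than what the present lemma allows (in particular it excludes all of case \eqref{b6}). For radial functions one recovers compactness on the wider range by combining the weighted Strauss inequality (Lemma~\ref{lem3.1}) with a splitting near $0$ and near infinity, but this step is not free and is precisely what \cite{ZOZ} supplies; if you want a self-contained argument you should spell it out rather than just ``invoke'' it.
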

\begin{rem}
(1) We also have the following Pohozaev's identities:
\begin{eqnarray}\label{a3}
\|Q_{\omega}\|_2^2=\frac{(2-b)(p+2)-\emph{\textbf{p}}_c}{(2-b)(p+2)\omega}\|Q_{\omega}\|_{c,p+2}^{p+2}=\frac{(2-b)(p+2)-\emph{\textbf{p}}_c}{\omega\emph{\textbf{p}}_c}\|\nabla Q_{\omega}\|^2_{b,2}.
\end{eqnarray}
In particular, we have
\begin{eqnarray}\nonumber
C_{GN}=\frac{(2-b)(p+2)}{\emph{\textbf{p}}_c}\left(\|\nabla Q_{\omega}\|_{b,2}\|Q_{\omega}\|_{2}^\sigma\right)^{2-\frac{\emph{\textbf{p}}_c}{2-b}}.
\end{eqnarray}

(2) We note that the Gagliardo-Nirenberg inequality for nonradial functions $f\in W^{1,2}_{b}$
\begin{equation}\label{g7}
\|f\|_{c,p+2}^{p+2}\leq C\|\nabla f\|_{b,2}^{\frac{\textbf{p}_c}{2-b}}\|f\|_{2}^{\frac{(2-b)(p+2)-\emph{\textbf{p}}_c}{2-b}}
\end{equation}
and $-n<c\leq0,\ \emph{\textbf{p}}_c\leq(2-b)(p+2)$ can be derived from the celebrated Caffarelli-Kohn-Nirenberg inequality (see \cite{CKN}), which also occurred in \cite{ZO}.
\end{rem}

In the case where $\Omega\subset\Bbb{R}^n$ is bounded, the weighted Sobolev embedding lemma has been proved in \cite{GGW}.
\begin{lemma}\label{lem3}
Let $n\geq3$ and let $\Omega\subset\Bbb{R}^n$ be an open bounded set. Assume that $b>2-n$ and $b-2<c\leq\frac{nb}{n-2}$. Then the embedding
\[\dot{W}_b^{1,2}(\Omega)\hookrightarrow L^q(\Omega;\ |x|^cdx)\]
is continuous provided $q\in[1, \frac{2n+2c}{n-2+b}]$, and this embedding is compact for $q\in[1, \frac{2n+2c}{n-2+b})$.
\end{lemma}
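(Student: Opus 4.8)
The plan is to decompose the statement into three parts: (a) the continuous embedding at the endpoint exponent $q^\ast:=\frac{2n+2c}{n-2+b}$; (b) the continuous embedding for $1\le q<q^\ast$; and (c) the compactness for $1\le q<q^\ast$. Part (a) will follow as a black-box consequence of a classical weighted inequality, part (b) is elementary once the domain is bounded, and part (c) carries the real content.

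First I would prove (a) by invoking the Caffarelli--Kohn--Nirenberg inequality \cite{CKN} on $\mathbb{R}^n$. Extending $u\in C_0^\infty(\Omega)$ by zero and writing CKN in the form $\big\||x|^{-\gamma}u\big\|_{L^{q^\ast}(\mathbb{R}^n)}\le C\big\||x|^{-\mu}\nabla u\big\|_{L^2(\mathbb{R}^n)}$, the choices $\mu=-b/2$ and $\gamma=-c/q^\ast$ are dictated by scaling, since $q^\ast$ is precisely the exponent that makes both sides homogeneous of the same degree under $u\mapsto u(\lambda\,\cdot)$. A direct check then shows that the CKN admissibility conditions $\mu<\frac{n-2}{2}$ and $\mu\le\gamma\le\mu+1$ translate exactly into $b>2-n$ and $b-2<c\le\frac{nb}{n-2}$; in particular $c>b-2$ corresponds to the strict inequality $\gamma<\mu+1$ (so $q^\ast<\infty$ and, incidentally, $q^\ast>2$), and $c\le\frac{nb}{n-2}$ to $\gamma\ge\mu$. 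This yields $\|u\|_{L^{q^\ast}(\Omega;\,|x|^cdx)}\le C\|\nabla u\|_{b,2}$, and since $\dot W^{1,2}_b(\Omega)$ is the closure of $C_0^\infty(\Omega)$, the inequality passes to the whole space by density. Part (b) is then immediate: because $c>b-2>-n$, the measure $d\mu:=|x|^cdx$ is finite on the bounded set $\Omega$, so H\"older gives $\|u\|_{L^q(\Omega;\mu)}\le\mu(\Omega)^{1/q-1/q^\ast}\|u\|_{L^{q^\ast}(\Omega;\mu)}$ for every $q\le q^\ast$, and composing with (a) gives the continuous embedding for all $q\in[1,q^\ast]$.

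For the compactness (c) with $q<q^\ast$ fixed, I would take $u_j\rightharpoonup 0$ weakly in $\dot W^{1,2}_b(\Omega)$, so that $\sup_j\|\nabla u_j\|_{b,2}<\infty$, and show $u_j\to0$ strongly in $L^q(\Omega;|x|^cdx)$ by splitting the integral at a small ball $B_\delta$. On $B_\delta$, H\"older against the finite measure $\mu$ together with (a) gives $\int_{B_\delta}|x|^c|u_j|^q\,dx\le\mu(B_\delta)^{1-q/q^\ast}\|u_j\|_{L^{q^\ast}(\Omega;\mu)}^q\le C\,\delta^{(n+c)(1-q/q^\ast)}$, which tends to $0$ as $\delta\to0$ uniformly in $j$; this is exactly where the strict subcriticality $q<q^\ast$ and the positivity $n+c>0$ are used. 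On $\Omega\setminus B_\delta$ the weights $|x|^b$ and $|x|^c$ are bounded above and below by positive constants, so, using also the $q=2$ case of (b) to control $\|u_j\|_{L^2(\Omega\setminus B_\delta)}$, the zero-extensions of the $u_j$ (legitimate since elements of $\dot W^{1,2}_b(\Omega)$ vanish near $\partial\Omega$) are bounded in $W^{1,2}$ of a smooth annulus $B_R\setminus\overline{B_\delta}$ with $B_R\supset\overline\Omega$, hence converge weakly to $0$ there. The classical Rellich--Kondrachov theorem then gives $u_j\to0$ strongly in $L^q(B_R\setminus\overline{B_\delta})$ for every $q<\frac{2n}{n-2}$; since the hypothesis $c\le\frac{nb}{n-2}$ forces $q^\ast\le\frac{2n}{n-2}$, our $q$ qualifies, and boundedness of $|x|^c$ upgrades this to $L^q(\Omega\setminus B_\delta;|x|^cdx)$. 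A standard $\varepsilon/2$ argument — pick $\delta$ making the $B_\delta$-tail $<\varepsilon/2$ for all $j$, then let $j\to\infty$ — finishes the proof.

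The step I expect to be the main obstacle is the near-origin estimate in (c): one must be sure the decay factor $\delta^{(n+c)(1-q/q^\ast)}$ is genuine, which rests on $n+c>0$ (guaranteeing $\mu(B_\delta)\to0$) and on $q<q^\ast$ being strict — both pleasantly automatic from the hypotheses, the first from $b>2-n$ and $c>b-2$. The only other delicate point, that $\Omega$ need not have regular boundary, is sidestepped by working with the zero-extensions of the $u_j$ on a smooth annulus, where Rellich--Kondrachov applies with no boundary hypotheses; the failure of compactness at $q=q^\ast$ (not claimed in the lemma) is also transparent, since the above tail estimate degenerates precisely there.
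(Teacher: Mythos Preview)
Your proposal is correct, but there is nothing to compare against: the paper does not prove this lemma at all. It is quoted verbatim from \cite{GGW} with the sentence ``the weighted Sobolev embedding lemma has been proved in \cite{GGW}'' and no further argument. What you have supplied is therefore a self-contained proof of a result the paper simply imports.

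Your route --- CKN (equivalently the paper's own Hardy--Sobolev inequality, Lemma~\ref{lem11}, with $d=-c/q^\ast$) for the endpoint, H\"older on the finite measure $|x|^c\,dx$ for the subcritical range, and a split at $B_\delta$ combined with the unweighted Rellich--Kondrachov on an outer annulus for compactness --- is the standard one and all steps check out. In particular your observations that $c\le \frac{nb}{n-2}$ forces $q^\ast\le \frac{2n}{n-2}$ (so the classical Rellich exponent suffices on $\Omega\setminus B_\delta$) and that $n+c>0$ (so $\mu(B_\delta)\to 0$) are exactly the two places where the hypotheses on $c$ enter. One very small slip: the parenthetical ``so $q^\ast<\infty$'' is misattributed --- it is $b>2-n$ that keeps $q^\ast$ finite, while $c>b-2$ gives $q^\ast>2$; this has no bearing on the argument.
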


Now we extend the above compact embedding lemma to the whole Euclidean space, which mainly adopts the idea in \cite{ZO}. We include the proof for completeness.
\begin{lemma}\label{lemma6}
Let $n\geq3$, $2-n<b<2,\ b-2<c\leq \frac{nb}{n-2}$. If the function $\omega$ satisfies
\begin{equation}\label{f68}
\limsup_{|x|\rightarrow0}\frac{\omega(x)}{|x|^c}<\infty\quad\quad\mbox{and}\quad\quad \limsup_{|x|\rightarrow\infty}\frac{\omega(x)}{|x|^c}<\infty,
\end{equation}
then the following embedding
\begin{equation}\label{d1}
W_b^{1,2}\hookrightarrow L^{p+2}(\omega(x)dx)
\end{equation}
is compact for all $2(2-b)<\emph{\textbf{p}}_c<(2-b)(p+2)$.
\end{lemma}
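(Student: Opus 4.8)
The plan is to deduce the global compact embedding from the local one in Lemma \ref{lem3} by a standard exhaustion-plus-tail-smallness argument, using the growth hypotheses \eqref{f68} on $\omega$ to kill the contributions near $x=0$ and near $x=\infty$. Let $(f_k)$ be a sequence bounded in $W_b^{1,2}$; after passing to a subsequence we may assume $f_k\rightharpoonup f$ weakly in $W_b^{1,2}$, and we want $f_k\to f$ strongly in $L^{p+2}(\omega(x)dx)$. Writing $g_k=f_k-f$, it suffices to show $\|g_k\|_{L^{p+2}(\omega\,dx)}\to0$. Split $\Bbb{R}^n=B_\varepsilon\cup(B_R\setminus B_\varepsilon)\cup B_R^c$ for small $\varepsilon$ and large $R$. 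On the annulus $B_R\setminus B_\varepsilon$, Lemma \ref{lem3} applied with the bounded open set $\Omega=B_R\setminus\overline{B_\varepsilon}$ (where $|x|^c\sim$ a positive bounded weight, so also $\omega\lesssim |x|^c\lesssim 1$ there by \eqref{f68}) and the exponent $q=p+2$ — which lies strictly below the critical exponent $\tfrac{2n+2c}{n-2+b}$ precisely because $\textbf{p}_c<(2-b)(p+2)$ — gives a compact embedding, so $g_k\to0$ in $L^{p+2}$ on this annulus, hence in $L^{p+2}(\omega\,dx)$ restricted there.

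Next I would estimate the two tails uniformly in $k$. Near the origin, \eqref{f68} gives $\omega(x)\le C|x|^c$ on $B_\varepsilon$ for $\varepsilon$ small, so
\[
\int_{B_\varepsilon}\omega(x)|g_k|^{p+2}\,dx\le C\int_{B_\varepsilon}|x|^c|g_k|^{p+2}\,dx.
\]
To make this small uniformly in $k$, I would interpolate this local weighted $L^{p+2}$ norm against a subcritical and the critical weighted Lebesgue norm on $B_\varepsilon$ (using that $2(2-b)<\textbf{p}_c$, so $p+2$ sits strictly between $2$ and the critical exponent $\tfrac{2n+2c}{n-2+b}$), bound the critical norm by $\|g_k\|_{\dot W_b^{1,2}(B_\varepsilon)}\le \|g_k\|_{\dot W_b^{1,2}}\le C$ via the continuous endpoint embedding in Lemma \ref{lem3} (restricted to the ball $B_\varepsilon$), and bound the lower norm by a quantity that carries an explicit positive power of $\varepsilon$ coming from the volume of $B_\varepsilon$ and the gap between the exponents; the net effect is a bound $\le C\,\varepsilon^{\theta}$ with $\theta>0$, uniform in $k$. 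The tail near infinity is handled symmetrically: on $B_R^c$, \eqref{f68} gives $\omega(x)\le C|x|^c$ with $c\le 0$, and I would again interpolate between the critical weighted norm (controlled by $\|g_k\|_{\dot W_b^{1,2}(B_R^c)}\le C$) and a weighted $L^2$-type quantity, exploiting the decay of the weight $|x|^c$ for $|x|\ge R$ (since $c\le 0$) together with boundedness of $(f_k)$ — and of $f$ — in $L^2$ to obtain a bound $\le C\,R^{-\theta'}$, $\theta'>0$, uniform in $k$.

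Combining the three pieces: given $\eta>0$, first choose $\varepsilon$ small and $R$ large so that the two tail contributions are each $<\eta/3$ for all $k$; then, with $\varepsilon,R$ fixed, use the compact embedding on the annulus to find $k_0$ with the middle term $<\eta/3$ for $k\ge k_0$. Hence $\|g_k\|_{L^{p+2}(\omega\,dx)}^{p+2}<\eta$ for $k\ge k_0$, proving $f_k\to f$ in $L^{p+2}(\omega\,dx)$ and establishing the compactness in \eqref{d1}. The main obstacle is the tail-smallness step: one must choose the interpolation exponents so that every occurrence of $\|g_k\|$ in a genuinely critical scaling-invariant norm is absorbed by the uniform $\dot W_b^{1,2}$ bound (it cannot be made small), while the remaining factor is a subcritical norm that produces a genuine positive power of $\varepsilon$ (resp. $R^{-1}$); here the strict inequalities $2(2-b)<\textbf{p}_c<(2-b)(p+2)$ are exactly what guarantee $p+2$ is strictly subcritical and strictly above $2$, so that both tails can be controlled. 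The restriction $c\le \tfrac{nb}{n-2}$ is needed to invoke Lemma \ref{lem3}, and $b<2$, $b>2-n$ ensure the weighted Sobolev framework is nondegenerate.
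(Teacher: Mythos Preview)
Your overall strategy---three-region decomposition, compactness on the annulus via Lemma~\ref{lem3}, and uniform tail smallness near $0$ and $\infty$---is exactly the paper's, and the near-origin estimate is essentially equivalent to what the paper does. One technical point: the paper introduces a smooth cutoff $\psi$ supported in a fixed ball and applies a global Hardy--Sobolev inequality to $\psi g_n$, rather than invoking Lemma~\ref{lem3} on $B_\varepsilon$ directly; you should do likewise, since the bare restriction $g_k|_{B_\varepsilon}$ need not lie in the space $\dot W_b^{1,2}(B_\varepsilon)$ to which Lemma~\ref{lem3} applies.

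The one place your sketch diverges from the paper, and where it has a gap, is the mechanism for the tail at infinity. You propose to extract decay from the weight $|x|^c$ itself, invoking ``$c\le 0$''; but $c\le 0$ is not among the hypotheses (only $c\le\frac{nb}{n-2}$, which permits $c>0$ when $b>0$), and even when $c\le 0$ this yields at best a factor $R^{c}$, useless when $c$ is close to $0$. The paper's device is to \emph{shift the weight}: on $|x|>R$ write $|x|^c\le R^{-(2-b)/2}\,|x|^{c+(2-b)/2}$, and then bound $\int|x|^{c+(2-b)/2}|g_n|^{p+2}\,dx$ uniformly via a Caffarelli--Kohn--Nirenberg inequality in terms of $\|\nabla g_n\|_{b,2}$ and $\|g_n\|_2$. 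The strict lower bound $\textbf{p}_c>2(2-b)$ is what provides room to raise the weight exponent by $(2-b)/2$ while keeping the CKN interpolation parameter $a=\frac{\textbf{p}_c-2+b}{(2-b)(p+2)}$ in $(0,1)$; the resulting decay $R^{-(2-b)/2}$ is then independent of the sign of $c$. This weight-shift is the missing ingredient in your treatment of the outer tail.
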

\begin{proof}
Let $\{f_n\}_{n\in\Bbb{N}}$ be a bounded sequence in $W_b^{1,2}$. Then there exists a function $f\in W_b^{1,2}$ such that, up to subsequence,
\begin{equation}\label{f69}
f_n\rightharpoonup f\quad\quad \mbox{in}\ \ W_b^{1,2}\quad \mbox{as}\ n\rightarrow\infty.
\end{equation}
Define $g_n=f_n-f$. It suffices to show that
\[\lim_{n\rightarrow\infty}\int \omega(x)|g_n|^{p+2}dx=0.\]

To do it, by \eqref{f68}, there exists $R_1>r_1>0$ and various constants $C>0$ such that
\begin{equation}\nonumber
\omega(x)\leq C|x|^c\quad\quad\ \mbox{for any}\ x\ \mbox{with}\ 0<|x|\leq r_1\quad
\end{equation}
and
\begin{equation}\nonumber
\omega(x)\leq C|x|^c\quad\quad\ \ \mbox{for any}\ x\ \mbox{with}\ |x|\geq R_1.\quad\quad
\end{equation}
Now we choose a smooth cut-off function $\psi\in C_0^\infty(\Bbb{R}^n)$ satisfying
\begin{equation}\psi(x)=\nonumber
\left\{
\begin{aligned}
 &1, \quad\ |x|\leq\frac{r_1}{2},\\
 & 0, \quad\ |x|\geq r_1,
\end{aligned}\right.\ \quad\mbox{and}\quad\quad |\nabla\psi|\leq C.
\end{equation}
Then for $0<r<\frac{r_1}{2}$, we get
\begin{eqnarray}\nonumber
\lefteqn{\int_{|x|\leq r}\omega(x)|g_n|^{p+2}dx\leq Cr^{\frac{(2-b)(p+2)-\textbf{p}_c}{2}}\int|x|^{\frac{np-(2-b)(p+2)}{2}}|\psi g_n|^{p+2}dx}\\\nonumber
&&\quad\quad\quad\quad\quad\quad\quad \leq Cr^{\frac{(2-b)(p+2)-\textbf{p}_c}{2}}(\int|x|^{b}|\nabla(\psi g_n)|^2dx)^{\frac{p+2}{2}}\quad\quad
\end{eqnarray}
for all $2(2-b)<\textbf{p}_c<(2-b)(p+2)$ and $b>2-n$, where we used the Hardy-Sobolev inequality \eqref{c26} at the last step.
So, given any small $\epsilon>0$ and combined with  \eqref{f69}, there exists $r_1>r>0$ such that
\begin{equation}\label{d10}
\int_{|x|\leq r}\omega(x)|g_n|^{p+2}dx<\frac{\epsilon}{3}.
\end{equation}

Next, we will control the integral $\int\omega(x)|g_n|^{p+2}dx$ out of a big ball. For $R>R_1$, we have
\begin{equation}\label{d6}
\int_{|x|>R}\omega(x)|g_n|^{p+2}dx\leq\frac{C}{R^{\frac{2-b}{2}}}\int_{|x|>R}|x|^{c+\frac{2-b}{2}}|g_n|^{p+2}dx.
\end{equation}

To estimate \eqref{d6}, we invoke the Caffarelli-Kohn-Nirenberg inequality to obtain
\begin{equation}\nonumber
\int|x|^{c+\frac{2-b}{2}} |g_n|^{p+2}dx\leq C\|\nabla g_n\|_{b,2}^{a(p+2)}\|g_n\|_{2}^{(1-a)(p+2)}
\end{equation}
for $p\in(\frac{2c+2-b}{n},\ \frac{2c+3(2-b)}{n-2+b})$ and $b>2-n$, where $a=\frac{\textbf{p}_c-2+b}{(2-b)(p+2)}\in(0,1).$
Thus we have proved that
\begin{equation}\label{d7}
\{g_n\}_{n\in\Bbb{N}}\ \mbox{is\ uniformly\ bounded\ in}\ L^{p+2}(|x|^{c+\frac{2-b}{2}}dx)
\end{equation}
for all $p\in(\frac{2(2-b+c)}{n},\ \frac{2(2-b+c)}{n-2+b})$. 
Therefore, for any $\epsilon>0$, it follows from \eqref{d6} and \eqref{d7} that there exists $R>R_1$ such that
\begin{equation}\label{g39}
\int_{|x|>R}\omega(x)|g_n|^{p+2}dx<\frac{\epsilon}{3}.
\end{equation}

Finally, for $R>r>0$ given above, we deduce from Lemma \ref{lem3} that the embedding
\[\dot{W}_b^{1,2}(r<|x|\leq R)\hookrightarrow L^{p+2}(r<|x|\leq R; |x|^cdx)\]
is compact for $p<\frac{2(2-b+c)}{n-2+b}$. This together with \eqref{f69} yields that
\begin{equation}\label{f70}
\int_{r<|x|\leq R}\omega(x)|g_n|^{p+2}dx<\frac{\epsilon}{3}.
\end{equation}
Therefore, putting \eqref{d10}, \eqref{g39} and \eqref{f70} all together, we conclude that the embedding \eqref{d1} is compact.
\end{proof}

As a consequence, for $\omega(x)=|x|^c$ with $b-2<c\leq\frac{nb}{n-2}$, Lemma \ref{lemma6} implies that the embedding
\begin{equation}
W_b^{1,2}\hookrightarrow L^{p+2}(|x|^cdx)
\end{equation}
is compact for $2(2-b)<\textbf{p}_c<(2-b)(p+2)$.

At the end, we recall the radially weighted Strauss inequality cited in \cite{ZOZ}.
\begin{lemma}\label{lem3.1}
Let $b\geq2-2n$ and $f\in W_b^{1,2}$ be a radial function. Then we have the following inequality
\begin{equation}\nonumber
\sup_{x\in \Bbb{R}^n}|x|^{\frac{2n-2+b}{4}}|f|\leq C_n\|\nabla f\|_{b,2}^{\frac{1}{2}}\|f\|_{2}^{\frac{1}{2}}.
\end{equation}
\end{lemma}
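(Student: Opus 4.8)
The plan is to reduce to radial functions in $C_0^\infty(\mathbb{R}^n)$ and then run the classical Strauss argument in the radial variable, inserting the weight $|x|^{(2n-2+b)/2}$ at the right moment.

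First I would invoke the definition of $W_b^{1,2}$ as the completion of $C_0^\infty(\mathbb{R}^n)$: since the right-hand side is continuous in the $W_b^{1,2}$-norm, it suffices to establish the inequality for radial $f\in C_0^\infty(\mathbb{R}^n)$ and then recover the general case by a routine approximation argument (applying the inequality to differences $f_j-f_k$ shows $r^{(2n-2+b)/4}f_k(r)$ is uniformly Cauchy on $(0,\infty)$, which also gives meaning to the pointwise values of a general radial $f$). For such a smooth compactly supported radial $f$, write $f=f(r)$ with $r=|x|$; then $f(r)=0$ for $r$ large, so by the fundamental theorem of calculus
\[
|f(r)|^2=-\int_r^\infty\frac{d}{ds}|f(s)|^2\,ds=-2\int_r^\infty\mathrm{Re}\bigl(\overline{f(s)}\,f'(s)\bigr)\,ds\leq 2\int_r^\infty|f(s)|\,|f'(s)|\,ds .
\]

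Next, I would multiply by $r^{(2n-2+b)/2}$. Because $2n-2+b\geq0$ — this is exactly where the hypothesis $b\geq 2-2n$ enters — and $r\leq s$ on the domain of integration, the factor $r^{(2n-2+b)/2}$ may be absorbed into $s^{(2n-2+b)/2}$, giving
\[
r^{(2n-2+b)/2}|f(r)|^2\leq 2\int_r^\infty s^{(2n-2+b)/2}|f(s)|\,|f'(s)|\,ds .
\]
Then I would split the exponent as $\tfrac{2n-2+b}{2}=\tfrac{n-1}{2}+\tfrac{n-1+b}{2}$ and apply the Cauchy--Schwarz inequality, extending the integrals to $(0,\infty)$:
\[
r^{(2n-2+b)/2}|f(r)|^2\leq 2\Bigl(\int_0^\infty s^{n-1}|f(s)|^2\,ds\Bigr)^{1/2}\Bigl(\int_0^\infty s^{n-1+b}|f'(s)|^2\,ds\Bigr)^{1/2}.
\]
Recognizing, up to the surface area $|\mathbb{S}^{n-1}|$ of the unit sphere and the identity $|\nabla f|=|f'|$ for radial $f$, that the two integrals equal $|\mathbb{S}^{n-1}|^{-1}\|f\|_2^2$ and $|\mathbb{S}^{n-1}|^{-1}\|\nabla f\|_{b,2}^2$, I would take square roots and the supremum over $r>0$ to obtain the stated inequality with $C_n=\sqrt{2/|\mathbb{S}^{n-1}|}$.

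The computation is entirely routine; the only two points deserving care are the vanishing of the boundary term at infinity together with the final density/limiting step (both dispatched by the density of radial $C_0^\infty$ functions in $W_b^{1,2}$), and the observation that $b\geq 2-2n$ is used precisely to ensure the weight exponent $(2n-2+b)/2$ is nonnegative so it can be carried inside the integral. There is no substantive obstacle beyond bookkeeping.
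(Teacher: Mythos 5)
Your proof is correct. The paper does not prove Lemma \ref{lem3.1} itself --- it only recalls it as a known inequality from \cite{ZOZ} --- but your argument is the standard weighted Strauss radial lemma proof (fundamental theorem of calculus in the radial variable, absorption of the weight via $r\le s$ together with $2n-2+b\ge 0$, then Cauchy--Schwarz splitting the exponent as $\tfrac{n-1}{2}+\tfrac{n-1+b}{2}$), the constant $C_n=\sqrt{2/|\mathbb{S}^{n-1}|}$ comes out right, and the density/approximation step for radial functions in $W_b^{1,2}$ is handled appropriately.
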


The following known Hardy-Sobolev inequality appears in \cite{WW}.
\begin{lemma}\label{lem11}
Let $n\geq3,\ b>2-n$ and $-\frac{b}{2}\leq d\leq \frac{2-b}{2}$. Then
\begin{equation}\label{c26}
(\int |x|^{-\frac{2nd}{n-2+b+2d}}|f|^{\frac{2n}{n-2+b+2d}}dx)^{\frac{n-2+b+2d}{n}}\leq C_{b,d}\int |x|^b|\nabla f|^2dx.
\end{equation}

\end{lemma}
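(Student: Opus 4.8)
\textbf{Proof proposal for Lemma \ref{lem11} (Hardy--Sobolev inequality).}

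The plan is to reduce the weighted inequality to the classical Sobolev embedding by the standard change-of-exponent substitution. First I would reduce to $f\in C_0^\infty(\mathbb{R}^n)$ by density. Set $q:=\frac{2n}{n-2+b+2d}$ so that the left-hand side is $\||x|^{-d}f\|_{q}^{2}$ raised to the appropriate power, and observe that under the hypotheses $-\tfrac{b}{2}\le d\le\tfrac{2-b}{2}$ and $b>2-n$ one has $q\in[2,\tfrac{2n}{n-2}]$, i.e. $q$ is a Sobolev-admissible exponent in dimension $n$. The exponent bookkeeping is routine: $n-2+b+2d\in[n-2,n+2-2b+\dots]$ needs to be checked to land in $[n-2,n]$, which is exactly what the two-sided bound on $d$ guarantees.

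The key step is to write $|x|^{-d}f = |x|^{-d-\frac{b}{2}}\cdot(|x|^{\frac b2} f)$ is not quite the right split; instead I would use the substitution/Rellich--trick: set $g=|x|^{\gamma}f$ with $\gamma$ chosen so that $\||x|^b|\nabla f|^2\|_{L^1}$ is comparable to $\|\nabla g\|_{L^2}^2$ modulo an integration-by-parts term. More directly, one can invoke the Caffarelli--Kohn--Nirenberg inequality (already cited in the paper, see \cite{CKN}) in the special two-weight case with weights $|x|^b$ on the gradient and $|x|^{-2nd/(n-2+b+2d)}$ on the function, whose scaling exponents match precisely because of the definition of $q$; the admissibility conditions of CKN in this borderline ($L^2$-on-the-gradient) case reduce exactly to $b>2-n$ and $-\frac b2\le d\le\frac{2-b}{2}$. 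This yields \eqref{c26} with a constant $C_{b,d}$ depending only on $n,b,d$.

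Alternatively, for a self-contained argument I would: (i) pass to radial-type coordinates is not available since $f$ is not radial, so instead (ii) write $u=|x|^{-(n-2+b)/2}$-type comparison and use the ground-state substitution; the cleanest honest route is the integration by parts $\int |x|^{\mu}|f|^{q}\,dx$ against $\nabla\cdot(|x|^{\mu+1}\frac{x}{|x|})$ to generate a factor of $\nabla f$, followed by Hölder with exponents $(2,2^*)$ and absorption, which is the classical proof of the Hardy inequality generalized to weights — this is where the constraint $d\le\frac{2-b}{2}$ enters, ensuring the Hölder exponent on the $|x|^{b/2}\nabla f$ factor is exactly $2$ and the residual weight power is nonnegative so the boundary term vanishes.

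The main obstacle is purely the exponent/admissibility bookkeeping: verifying that the two-sided bound on $d$ together with $b>2-n$ places $q$ in the Sobolev range and makes the CKN (or integration-by-parts) admissibility conditions hold with the correct signs, and checking that the endpoint cases $d=-\frac b2$ (which should reduce to the plain weighted Sobolev embedding $\dot W^{1,2}_b\hookrightarrow L^{2n/(n-2+b)}$) and $d=\frac{2-b}{2}$ (which should reduce to the weighted Hardy inequality $\int|x|^{-2+b}|f|^2\le C\int|x|^b|\nabla f|^2$) are included. Since \cite{WW} is cited as the source, I would simply state that \eqref{c26} is a special case of the Caffarelli--Kohn--Nirenberg inequalities and refer to \cite{CKN, WW} for the detailed verification, rather than reproving it here.
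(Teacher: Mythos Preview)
Your final recommendation---to treat \eqref{c26} as a special case of the Caffarelli--Kohn--Nirenberg family and simply refer to \cite{CKN, WW}---is exactly what the paper does: the lemma is stated with the preamble ``The following known Hardy--Sobolev inequality appears in \cite{WW}'' and no proof is given. So on that level your proposal matches the paper.

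That said, the body of your proposal is more exploratory than a proof: you sketch three possible routes (a substitution $g=|x|^\gamma f$, a direct appeal to CKN admissibility, and an integration-by-parts/H\"older argument), abandon two of them mid-sentence, and never carry any one through. The CKN route is correct in spirit and your endpoint checks ($d=-b/2$ giving the weighted Sobolev embedding, $d=(2-b)/2$ giving the weighted Hardy inequality) are accurate, but the exponent bookkeeping you flag as ``routine'' is left undone (e.g.\ your line ``$n-2+b+2d\in[n-2,n+2-2b+\dots]$'' is incomplete). If you want a self-contained proof rather than a citation, commit to one route and finish it; otherwise, drop the exploratory paragraphs and keep only the citation, which is all the paper provides.
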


\subsection{Virial identities}
Given a real value function $\psi$, we define
\begin{equation}\label{e5}
I_\psi(t)=\int\psi(x)|u|^2dx.
\end{equation}
A direct computation shows that 
\begin{lemma}\label{lem2.1}
Let $u\in C(I; W_b^{1,2})$ be the solution to the INLS$_{b,V}$. Assume that $\psi,\ \phi\in W^{4,\infty}$ and  satisfy
\[\nabla\psi(x)=\frac{\nabla\phi(x)}{|x|^b}.\] Then
\begin{equation}\label{b26}
I'_\psi(t)=2\emph{Im}\int\nabla\phi\cdot\nabla u\overline{u}dx
\end{equation}
and
\begin{eqnarray}\nonumber
&&I''_\psi(t)=-\frac{2}{p+2}\int\left(p|x|^c\Delta\phi-2\nabla\phi\cdot\nabla|x|^c\right)|u|^{p+2}dx\\\nonumber
&&\quad\quad\quad\ -2\int(\nabla\phi\cdot\nabla |x|^b)|\nabla u|^2dx-2\int\nabla\phi\cdot\nabla V|u|^2dx\\\nonumber
&&\quad\quad\quad\ +4\emph{Re}\int(\nabla^2\phi\cdot\nabla u)\cdot\nabla \overline{u}|x|^bdx\\\label{b11}
&&\quad\quad\quad\ -\int(|x|^b\Delta^2\phi+\nabla\Delta\phi\cdot\nabla|x|^b)|u|^2dx.\quad\quad\quad\
\end{eqnarray}
\end{lemma}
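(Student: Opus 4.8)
\textbf{Proof proposal for Lemma \ref{lem2.1}.}

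The plan is to derive both identities by direct differentiation of $I_\psi(t)=\int\psi|u|^2\,dx$ along the flow, using the equation $i\partial_t u=-\nabla\cdot(|x|^b\nabla u)+Vu-|x|^c|u|^pu$ to substitute for $\partial_t u$ and then integrating by parts repeatedly. First I would compute $I'_\psi$: writing $\partial_t|u|^2=2\,\mathrm{Re}(\overline{u}\,\partial_t u)$ and using the equation, the potential term $V|u|^2$ and the nonlinear term $|x|^c|u|^{p+2}$ are real and drop out under $\mathrm{Re}$, leaving $I'_\psi=2\,\mathrm{Im}\int\psi\,\overline{u}\,\nabla\cdot(|x|^b\nabla u)\,dx$. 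Integrating by parts once gives $-2\,\mathrm{Im}\int|x|^b\nabla u\cdot\nabla(\psi\overline{u})\,dx$; the $|\nabla u|^2$ piece is real and vanishes under $\mathrm{Im}$, so this reduces to $-2\,\mathrm{Im}\int|x|^b(\nabla\psi\cdot\nabla u)\overline{u}\,dx = 2\,\mathrm{Im}\int(|x|^b\nabla\psi)\cdot\nabla u\,\overline{u}\,dx$ after a complex-conjugation symmetrization. Now the hypothesis $\nabla\psi=\nabla\phi/|x|^b$, i.e. $|x|^b\nabla\psi=\nabla\phi$, converts this to $2\,\mathrm{Im}\int\nabla\phi\cdot\nabla u\,\overline{u}\,dx$, which is \eqref{b26}. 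One should be slightly careful here that the manipulations are justified for $W_b^{1,2}$ solutions; this is where the regularity assumption $\psi,\phi\in W^{4,\infty}$ and a density/approximation argument enter, but I would treat this formally as the statement itself says ``a direct computation''.

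Next I would differentiate \eqref{b26} in $t$. Write $I''_\psi=2\,\mathrm{Im}\int\nabla\phi\cdot(\nabla\partial_t u\,\overline{u}+\nabla u\,\overline{\partial_t u})\,dx$. Using $\partial_t u = i[\nabla\cdot(|x|^b\nabla u)-Vu+|x|^c|u|^pu]$ (and its conjugate), and regrouping via integration by parts to move derivatives off $\partial_t u$, one obtains the standard Morawetz/virial-type bilinear expression. The general principle is: each of the three pieces of the equation (dispersive, potential, nonlinear) contributes a term to $I''_\psi$ which is the corresponding ``commutator'' with the multiplier $\nabla\phi\cdot\nabla$. For the potential term one gets $-2\int(\nabla\phi\cdot\nabla V)|u|^2\,dx$; for the nonlinear term, integration by parts on $\mathrm{Im}\int\nabla\phi\cdot\nabla(i|x|^c|u|^pu)\overline{u}$ plus its conjugate, together with $\nabla(|u|^{p+2})$ identities, yields $-\frac{2}{p+2}\int(p|x|^c\Delta\phi-2\nabla\phi\cdot\nabla|x|^c)|u|^{p+2}\,dx$; for the dispersive term, the classical computation of $\frac{d}{dt}\,\mathrm{Im}\int\nabla\phi\cdot\nabla u\,\overline{u}$ against the flow generated by $-\nabla\cdot(|x|^b\nabla)$ produces the remaining three terms: the quadratic-gradient term $-2\int(\nabla\phi\cdot\nabla|x|^b)|\nabla u|^2\,dx$, the Hessian term $4\,\mathrm{Re}\int|x|^b(\nabla^2\phi\,\nabla u)\cdot\nabla\overline{u}\,dx$, and the biharmonic term $-\int(|x|^b\Delta^2\phi+\nabla\Delta\phi\cdot\nabla|x|^b)|u|^2\,dx$. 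Collecting everything gives \eqref{b11}.

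The main obstacle is the dispersive-term bookkeeping: computing $\frac{d}{dt}\,\mathrm{Im}\int\nabla\phi\cdot\nabla u\,\overline{u}\,dx$ when the operator is the variable-coefficient $-\nabla\cdot(|x|^b\nabla)$ rather than the flat Laplacian. One must carefully integrate by parts several times, keeping track of all terms in which derivatives hit the weight $|x|^b$ versus the multiplier $\phi$ versus the solution $u$; in the flat case ($b=0$) the weight-derivative terms are absent and the Hessian/biharmonic terms take their familiar form, but here the extra terms $\nabla\phi\cdot\nabla|x|^b$ and $\nabla\Delta\phi\cdot\nabla|x|^b$ arise precisely from these weight derivatives. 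A clean way to organize this is to first record the elementary identity $2\,\mathrm{Re}\int|x|^b(\nabla^2\phi\,v)\cdot\overline{v}\,dx = \int\nabla\phi\cdot\nabla(|x|^b|v|^2)\,dx - \int(\nabla\phi\cdot\nabla|x|^b)|v|^2\,dx$ with $v=\nabla u$, which already exhibits why the Hessian term and the $\nabla\phi\cdot\nabla|x|^b$ term appear together, and to handle the double-Laplacian term by two integrations by parts starting from $\mathrm{Im}\int\nabla\phi\cdot\nabla\big(i\nabla\cdot(|x|^b\nabla u)\big)\overline{u}$ and its conjugate. A secondary, more technical point is justifying the differentiation under the integral sign and the integrations by parts at the level of $C(I;W_b^{1,2})$ solutions, which as noted I would either do by the usual approximation of $u_0$ by smooth compactly supported data or simply invoke as the formal ``direct computation'' the lemma advertises.
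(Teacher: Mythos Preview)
Your approach is exactly the standard direct computation the paper has in mind; in fact the paper does not spell out any details at all, merely citing \cite{CH} and writing ``the proof is standard\ldots here we omit the details''. There is a harmless sign slip in your derivation of $I'_\psi$ (the first occurrence of $2\,\mathrm{Im}$ should be $-2\,\mathrm{Im}$, which then propagates and is artificially ``fixed'' by your symmetrization step), but the method and the final identities are correct.
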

\begin{proof}
The proof is standard, see, for instance \cite{CH}, which deals with the variable coefficient Schr\"{o}dinger equation with harmonic potential $V(x)=|x|^2$. Here we omit the details.
\end{proof}

As a consequence, we obtain the following virial identity for the INLS$_{b,V}$.
\begin{corollary}\label{cor2}
Let $u_0\in W_b^{1,2}\cap L^2(|x|^{2-b}dx)$. Assume $u\in C(I; W_b^{1,2})$ is the maximal lifespan solution to the INLS$_{b,V}$. Then $u\in C(I; L^2(|x|^{2-b}dx))$ and for any $t\in I$,
\begin{equation}\nonumber
\frac{d^2}{dt^2}\|u(t)\|^2_{2-b,2}=2(2-b)^2\|\nabla u\|^2_{b,2}-2(2-b)\int (x\cdot\nabla V)|u|^2dx-\frac{2(2-b)\emph{\textbf{p}}_c}{p+2}\|u\|^{p+2}_{c,p+2}.
\end{equation}
\end{corollary}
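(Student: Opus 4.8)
The plan is to derive the virial identity for $\|u(t)\|_{2-b,2}^2$ as a direct specialization of Lemma~\ref{lem2.1}. First I would choose the weight pair $(\psi,\phi)$ so that $\psi(x)=|x|^{2-b}$; to match the hypothesis $\nabla\psi=\nabla\phi/|x|^b$ this forces $\nabla\phi=|x|^b\nabla\psi=(2-b)|x|^b|x|^{-b}x=(2-b)x$, hence $\phi(x)=\tfrac{2-b}{2}|x|^2$. With this explicit $\phi$ all the derivatives occurring in \eqref{b11} are elementary: $\nabla\phi=(2-b)x$, $\nabla^2\phi=(2-b)\,\mathrm{Id}$, $\Delta\phi=(2-b)n$, and $\Delta^2\phi=0$. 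I would substitute these into \eqref{b26}–\eqref{b11} term by term.

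The second step is to simplify each of the five terms. Since $\phi$ is quadratic, $\nabla\Delta\phi=0$, so the last line of \eqref{b11} vanishes entirely (both $\Delta^2\phi=0$ and $\nabla\Delta\phi=0$). The Hessian term becomes $4\,\mathrm{Re}\int (2-b)|\nabla u|^2|x|^b\,dx=4(2-b)\|\nabla u\|_{b,2}^2$. For the term $-2\int(\nabla\phi\cdot\nabla|x|^b)|\nabla u|^2\,dx$, I compute $\nabla\phi\cdot\nabla|x|^b=(2-b)x\cdot(b|x|^{b-2}x)=b(2-b)|x|^b$, giving $-2b(2-b)\|\nabla u\|_{b,2}^2$; adding this to the Hessian contribution yields $(4(2-b)-2b(2-b))\|\nabla u\|_{b,2}^2=2(2-b)^2\|\nabla u\|_{b,2}^2$, which is exactly the first term claimed. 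For the nonlinear term, $\nabla|x|^c=c|x|^{c-2}x$ so $\nabla\phi\cdot\nabla|x|^c=c(2-b)|x|^c$, and $p|x|^c\Delta\phi-2\nabla\phi\cdot\nabla|x|^c=(pn(2-b)-2c(2-b))|x|^c=(2-b)(np-2c)|x|^c=(2-b)\mathbf{p}_c|x|^c$; the prefactor $-\tfrac{2}{p+2}$ then produces $-\tfrac{2(2-b)\mathbf{p}_c}{p+2}\|u\|_{c,p+2}^{p+2}$. Finally $-2\int\nabla\phi\cdot\nabla V\,|u|^2\,dx=-2(2-b)\int(x\cdot\nabla V)|u|^2\,dx$. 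Collecting the three surviving terms gives precisely the stated formula.

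The remaining point is the regularity/justification: I need $u\in C(I;L^2(|x|^{2-b}dx))$ so that $\|u(t)\|_{2-b,2}^2$ is finite and twice differentiable, and I need the formal computation of Lemma~\ref{lem2.1} to be rigorous for the unbounded weight $\phi=\tfrac{2-b}{2}|x|^2\notin W^{4,\infty}$. The standard remedy, which I would only sketch, is a truncation argument: apply Lemma~\ref{lem2.1} with a family of weights $\phi_R(x)=\tfrac{2-b}{2}|x|^2\chi(x/R)$ that agree with $\tfrac{2-b}{2}|x|^2$ on $|x|\le R$, are globally bounded in $W^{4,\infty}$, and satisfy the compatibility relation $\nabla\psi_R=\nabla\phi_R/|x|^b$ with a correspondingly truncated $\psi_R$; then propagate the finite-variance bound by a Gronwall/bootstrap argument on $I_{\psi_R}$ to get a uniform-in-$R$ bound, pass $R\to\infty$, and use the conservation laws $M(u)=M(u_0)$ and $E_{b,V}(u)=E_{b,V}(u_0)$ together with the Gagliardo–Nirenberg inequality \eqref{a19}/\eqref{g7} to control $\|u\|_{c,p+2}^{p+2}$ uniformly on compact time intervals. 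I expect this approximation/justification step — rather than the algebra — to be the main obstacle, since one must be careful that the error terms introduced by the cutoff (supported in $R\le|x|\le 2R$) vanish as $R\to\infty$, which uses exactly the decay of $u$ in $L^2(|x|^{2-b}dx)$ that one is simultaneously trying to establish; this is why the statement explicitly records $u\in C(I;L^2(|x|^{2-b}dx))$ as part of the conclusion. Since the excerpt defers the analogous computation to \cite{CH}, in the write-up I would likewise indicate that the truncation is routine and refer to that source for the details, presenting the substitution above as the substance of the proof.
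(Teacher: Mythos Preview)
Your algebraic derivation is correct and matches the paper's up to an overall scaling: the paper takes $\nabla\phi=x$ (so $\psi=\frac{1}{2-b}|x|^{2-b}$) and then multiplies the resulting $I''_\psi$ by $2-b$, whereas you absorb that factor into $\phi$ from the start. The term-by-term computation you give is exactly what is needed.

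The one substantive difference is in the justification of $u\in C(I;L^2(|x|^{2-b}dx))$. The paper does \emph{not} defer this to \cite{CH}; that citation is for Lemma~\ref{lem2.1} only. Instead, the paper carries out an explicit regularization, but with an exponential damping factor rather than a spatial cutoff: one multiplies the equation by $ie^{-2\varepsilon|x|^{2-b}}|x|^{2-b}\overline{u}$, integrates, and obtains
\[
\frac{d}{dt}\int e^{-2\varepsilon|x|^{2-b}}|x|^{2-b}|u|^2\,dx
=2(2-b)\,\mathrm{Im}\int e^{-2\varepsilon|x|^{2-b}}\overline{u}\,x\cdot\nabla u\,(1-2\varepsilon|x|^{2-b})\,dx.
\]
The point is that $e^{-2\varepsilon|x|^{2-b}}(1-2\varepsilon|x|^{2-b})$ is bounded uniformly in $x$ and $\varepsilon$, so the right-hand side is controlled by $\|\nabla u\|_{b,2}\|u\|_2$ with no circularity, and one passes $\varepsilon\to 0$ by monotone convergence. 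Your proposed truncation $\phi_R=\tfrac{2-b}{2}|x|^2\chi(x/R)$ also works, and for the first-order identity \eqref{b26} the annular error terms are harmless (they are controlled by $\|\nabla u\|_{b,2}\|u\|_2$ just as above, not by the weighted norm you are trying to bound), so your worry about circularity is overstated. The exponential regularization is slightly slicker here because it keeps the weight $|x|^{2-b}$ intact and avoids having to track cutoff derivatives; your approach has the advantage of reusing the same localized-virial machinery that appears later in Lemma~\ref{lem2}. Note also that no Gagliardo--Nirenberg control of $\|u\|_{c,p+2}^{p+2}$ is needed for this step, since the nonlinearity does not enter the first time derivative \eqref{b26}.
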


\begin{proof}
Let $\varepsilon>0,$ we multiply the INLS$_{b,V}$ equation \eqref{a0} by $ie^{-2\varepsilon |x|^{2-b}}|x|^{2-b}\overline{u}$, integrate over $\Bbb{R}^n$ and take the real part
of the result to obtain
\begin{equation}\nonumber
\frac{d}{dt}\int e^{-2\varepsilon |x|^{2-b}}|x|^{2-b}|u|^2dx=2\textmd{Im}\int|x|^{b}\nabla u\cdot\nabla(e^{-2\varepsilon |x|^{2-b}}|x|^{2-b}\overline{u})dx.
\end{equation}
Integrating the above equality with respect to $t$, we have
\begin{eqnarray}\label{e12}
\lefteqn{\int e^{-2\varepsilon |x|^{2-b}}|x|^{2-b}|u|^2dx=\int e^{-2\varepsilon |x|^{2-b}}|x|^{2-b}|u_0|^2dx}\\\nonumber
&&\quad\quad\quad\quad\quad\quad\quad\quad\quad+2(2-b)\textmd{Im}\int_0^t\int e^{-2\varepsilon |x|^{2-b}}\overline{u}x\cdot\nabla u(1-2\varepsilon |x|^{2-b})dxdt.
\end{eqnarray}
Since $ e^{-\varepsilon |x|^{2-b}}\left(1-2\varepsilon |x|^{2-b}\right)$ is bounded in both $x$ and $\varepsilon$, and that $\nabla u$ is bounded in $L^2(|x|^bdx)$, so it follows from \eqref{e12} that for every compact interval $I\subset[0, T^\ast)$, there exists constant $C$ such that
\[\int e^{-2\varepsilon |x|^{2-b}}|x|^{2-b}|u|^2dx\leq C\int e^{-2\varepsilon |x|^{2-b}}|x|^{2-b}|u_0|^2dx<\infty\]
for all $\varepsilon>0$. It follows that the function $t\mapsto u(t)$ is weakly continuous in $[0, T^\ast)\rightarrow L^2(|x|^{2-b}dx)$. Now let $\varepsilon\rightarrow0$ in \eqref{e12}, then
\begin{equation}\nonumber
\int|x|^{2-b}|u|^2dx=\int|x|^{2-b}|u_0|^2dx+2(2-b)\textmd{Im}\int_0^t\int\overline{u}x\cdot\nabla udxdt,
\end{equation}
which yields that the function $t\mapsto u(t)$ is continuous in $[0, T^\ast)\rightarrow L^2(|x|^{2-b}dx)$.
Hence, we conclude that the solution $u\in C([0, T^\ast);\ L^2(|x|^{2-b}dx))$.

It remains to show the virial identity. Let $\nabla\phi=x$ in Lemma \ref{lem2.1}, then
\begin{eqnarray}\nonumber
\lefteqn{\frac{d^2}{dt^2}\|u(t)\|^2_{2-b,2}=2(2-b)^2\|\nabla u\|^2_{b,2}}\\\nonumber
&&\quad\quad\quad\quad\quad\quad\quad-2(2-b)\int (x\cdot\nabla V)|u|^2dx-\frac{2(2-b)\textbf{p}_c}{p+2}\|u\|^{p+2}_{c,p+2},
\end{eqnarray}
which is the desired result.
\end{proof}

Next, we establish a localized virial estimate related to the INLS$_{b,V}$ equation \eqref{a0}.

If we suppose $\psi$ and $\phi$ are radially symmetric, then Lemma \ref{lem2.1} implies
\begin{equation}\label{c2}
I'_\psi(t)=2\textmd{Im}\int\phi'\frac{x\cdot\nabla u}{r}\overline{u}dx,
\end{equation}
where $r=|x|$. Moreover, since
\begin{eqnarray}\nonumber
\lefteqn{4\textmd{Re}\int(\nabla^2\phi\cdot\nabla u)\cdot\nabla \overline{u}|x|^b dx=4\int\frac{\phi'}{r}|x|^b|\nabla u|^2dx}\\\nonumber
&&\quad\quad\quad\quad\quad\quad\quad\quad\quad\quad\quad\quad+4\int(\frac{\phi''}{r^2}-\frac{\phi'}{r^3})|x|^b|x\cdot\nabla u|^2dx,\quad\quad
\end{eqnarray}
which is obtained by the fact that
\[\frac{\partial}{\partial x_j}=\frac{x_j}{r}\partial_r,\quad\quad\quad \frac{\partial^2}{\partial x_j\partial x_k}=(\frac{\delta_{jk}}{r}-\frac{x_jx_k}{r^3})\partial_r+\frac{x_jx_k}{r^2}\partial_r^2\]
on radial functions. Thus, \eqref{b11} can be rewritten as
\begin{eqnarray}\nonumber
\lefteqn{I''_\psi(t)=4\int(\frac{\phi''}{r^2}-\frac{\phi'}{r^3})|x|^b|x\cdot\nabla u|^2dx}\\\nonumber
&&\quad+(4-2b)\int\frac{\phi'}{r}|x|^b|\nabla u|^2dx-\frac{2}{p+2}\int(p\Delta\phi-\frac{2c\phi'}{r})|x|^c|u|^{p+2}dx\\\label{c3}
&&\quad-2\int\frac{\phi'}{r}(x\cdot\nabla V)|u|^2dx-\int(|x|^b\Delta^2\phi+\nabla\Delta\phi\cdot\nabla|x|^b)|u|^2dx.
\end{eqnarray}

By choosing an appropriate cut-off function similar to $x^2$ near the origin $x=0$, we deduce the following localized virial estimate.
\begin{lemma}\label{lem2}
Let $n\geq3$ and $b>2-n$. If $x\cdot\nabla V\in L^{\frac{n}{2}}(|x|^{-\frac{nb}{2}}dx)$ and $u\in C([0, T^\ast); W_b^{1,2})$ is the solution to the INLS$_{b,V}$ with radial initial data $u_0$, then there exists constant $C>0$ depending on $\emph{\textbf{p}}_c, b, M(u_0)$ such that the following localized virial estimate
\begin{eqnarray}\nonumber
\lefteqn{I''_{\psi_R}(t)\leq4(2-b)P(u)+C_{\epsilon}R^{-\frac{2\textbf{p}_c-(2-b)p}{4-p}}M(u_0)^{\frac{p}{4}+1}}\\\label{e23}
&&\ +CR^{b-2}M(u_0)+(2b+C\epsilon+C\|x\cdot\nabla V\|_{L^{\frac{n}{2}}(|x|>R;\ |x|^{-\frac{nb}{2}}dx)})\|\nabla u\|_{b,2}^2\quad\quad\
\end{eqnarray}holds for any $t\in[0, T^\ast)$.
\end{lemma}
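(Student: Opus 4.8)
The plan is to apply the radial virial identity \eqref{c3} with a suitably chosen radial cut-off function $\phi = \phi_R$ that behaves like $|x|^2$ near the origin and is globally bounded together with enough derivatives. Concretely, I would take $\psi_R(x) = R^2 \psi(x/R)$ (with $\nabla\psi_R = \nabla\phi_R/|x|^b$ as in Lemma \ref{lem2.1}), where $\psi$ is smooth, equals $|x|^2$ for $|x|\le 1$, is constant for $|x|\ge 2$, and satisfies $\psi'' \le 2$ and $|\psi'|/r \le 2$ everywhere. With this choice one has exact equality $\phi_R'' = \frac{\phi_R'}{r} $ and $\Delta\phi_R = n \frac{\phi_R'}{r}$ on $|x|\le R$, so that on the ball $|x|\le R$ the right-hand side of \eqref{c3} reproduces exactly $4(2-b)$ times the virial functional $P(u)$ defined in \eqref{c4} (using $p\Delta\phi - 2c\phi'/r = (np-2c)\phi'/r = \mathbf{p}_c \cdot 2$ there, and the bi-Laplacian / $\nabla\Delta\phi\cdot\nabla|x|^b$ terms vanish since $\phi_R$ is quadratic there). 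The task then reduces to estimating the "error" contributions coming from the annulus $R \le |x| \le 2R$ and the exterior region.

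The key steps, in order, would be: (1) write $I''_{\psi_R}(t) = 4(2-b)P(u) + (\text{error on } |x|\ge R)$, splitting the error into the four groups appearing in \eqref{c3}. (2) For the kinetic-type terms $\int(\frac{\phi''}{r^2}-\frac{\phi'}{r^3})|x|^b|x\cdot\nabla u|^2\,dx$ and $(4-2b)\int\frac{\phi'}{r}|x|^b|\nabla u|^2\,dx$, bound the weight factors by absolute constants times characteristic functions of $|x|\ge R$; the "$2b$" constant in \eqref{e23} comes precisely from tracking the coefficient $(4-2b) - 4 + \dots$ against the already-subtracted $4(2-b)\|\nabla u\|_{b,2}^2$, yielding a net $2b\|\nabla u\|_{b,2}^2$ plus an $O(1)$-constant-times-$\epsilon$ times $\|\nabla u\|_{b,2}^2$ after a Young split. (3) For the nonlinear term on the annulus, use the radial weighted Strauss-type decay from Lemma \ref{lem3.1}: on $|x|\ge R$ one has $|u|^p \lesssim R^{-\frac{(2n-2+b)p}{4}}(\|\nabla u\|_{b,2}\|u\|_2)^{p/2}$, combine with mass conservation and a Young inequality with the kinetic term to extract the factor $R^{-\frac{2\mathbf{p}_c-(2-b)p}{4-p}}M(u_0)^{p/4+1}$ (the exponents being exactly what balancing $\|\nabla u\|_{b,2}^2$ against the Strauss bound produces; this is where the hypothesis $p<4$ is used, to keep the Young exponent positive). (4) For the potential term $-2\int\frac{\phi_R'}{r}(x\cdot\nabla V)|u|^2\,dx$ restricted to $|x|\ge R$, apply Hölder with exponents $(\frac n2, \frac{n}{n-2})$ against the weighted Sobolev embedding $\dot W_b^{1,2}\hookrightarrow L^{\frac{2n}{n-2+b}}(|x|^{\cdot}dx)$ (Hardy–Sobolev, Lemma \ref{lem11}) to produce the term $C\|x\cdot\nabla V\|_{L^{n/2}(|x|>R;\,|x|^{-nb/2}dx)}\|\nabla u\|_{b,2}^2$; hypothesis (II) guarantees this norm is finite. (5) The remaining $-\int(|x|^b\Delta^2\phi_R + \nabla\Delta\phi_R\cdot\nabla|x|^b)|u|^2\,dx$ is supported on $|x|\ge R$ and, by scaling, $|\Delta^2\phi_R|\lesssim R^{-2}|x|^{-b}$ there (after accounting for the $|x|^b$ weight in $\nabla\psi_R = \nabla\phi_R/|x|^b$), so it is bounded by $CR^{b-2}M(u_0)$ via mass conservation. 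Summing (1)–(5) gives \eqref{e23}.

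The main obstacle I anticipate is step (3): extracting a clean negative power of $R$ from the exterior nonlinear term while keeping the coefficient of $\|\nabla u\|_{b,2}^2$ under control. One must interpolate the Strauss pointwise bound against the $L^{p+2}(|x|^c dx)$-norm on the annulus in a way that is compatible with the weight $|x|^c$ (note $c$ may be negative, so the annulus weight is bounded below by $R^c$ up to constants) and then Young-split off the kinetic part; this is where the precise exponent $\frac{2\mathbf{p}_c-(2-b)p}{4-p}$ and the restriction $p<4$ emerge, and getting the bookkeeping right — so that the leftover kinetic coefficient is genuinely $2b + C\epsilon + C\|x\cdot\nabla V\|_{\cdots}$ and not something worse — requires care. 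A secondary subtlety is justifying that $\psi_R\in W^{4,\infty}$ with all the bounds needed to invoke Lemma \ref{lem2.1}, and that the $x\cdot\nabla^2 V\, x^T$ term does not actually appear here (it will only be needed later, for Theorem \ref{thmm1}); at the level of this lemma only the first-order term $x\cdot\nabla V$ enters, through \eqref{c3}, so hypothesis (II) alone suffices.
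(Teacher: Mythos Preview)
Your proposal is essentially the paper's own proof: the same quadratic-near-origin cutoff $\phi_R=R^2\Theta(\cdot/R)$, the decomposition $I''_{\psi_R}=4(2-b)P(u)+\mathcal{R}_1+\mathcal{R}_2+\mathcal{R}_3+\mathcal{R}_4$, the Strauss bound (Lemma~\ref{lem3.1}) plus Young for the exterior nonlinear piece $\mathcal{R}_3$, H\"older together with Hardy--Sobolev (Lemma~\ref{lem11}) for the potential piece $\mathcal{R}_4$, and straightforward scaling for $\mathcal{R}_2$. Two small corrections to your write-up: it is $\phi_R$ (not $\psi_R$) that should be chosen as the $|x|^2$-like cutoff in the framework of Lemma~\ref{lem2.1} (with $\psi_R$ then determined via $\nabla\psi_R=|x|^{-b}\nabla\phi_R$), and the $C\epsilon\|\nabla u\|_{b,2}^2$ contribution comes from the Young split in your step~(3), not from the kinetic terms in step~(2) --- the bare $2b\|\nabla u\|_{b,2}^2$ is what remains from $\mathcal{R}_1$ alone.
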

\begin{proof}
We consider the radial function $\phi_R$ constructed by $\phi_R(x)=R^2\Theta(\frac{x}{R})$, where
\begin{equation}\Theta(x)=
\left\{
\begin{aligned}
 & |x|^2,\quad\quad\quad \mbox{if}\quad |x|\leq 1,\\\label{d13}
 & \mbox{smooth},\quad\ \mbox{if}\quad 1<|x|<2,\\
 & 0,\quad\quad\quad\quad \mbox{if}\quad |x|\geq2
\end{aligned}\right.
\end{equation}
and $\|\Theta\|_{W^{2,\infty}}\leq2$. Applying $\nabla\psi_R=\frac{\nabla\phi_R}{|x|^b}$ to \eqref{c3}, we can rewrite $I''_{\psi_R}(t)$ as
\begin{equation} \label{e13}
I''_{\psi_R}(t)=4(2-b)P(u)+\mathcal{R}_1+\mathcal{R}_2+\mathcal{R}_3+\mathcal{R}_4,
\end{equation}
where $\mathcal{R}_j\ (j=1,2,3,4)$ are defined by
\begin{eqnarray}\nonumber
&&\mathcal{R}_1=4\int_{|x|> R}(\frac{\phi''_R}{r^2}-\frac{\phi'_R}{r^3})|x|^b|x\cdot\nabla u|^2dx+(4-2b)\int_{|x|> R}(\frac{\phi'_R}{r}-2)|x|^b|\nabla u|^2dx,\\\nonumber
&&\mathcal{R}_2=-\int_{|x|> R}(|x|^b\Delta^2\phi_R+\nabla\Delta\phi_R\cdot\nabla|x|^b)|u|^2 dx,
\end{eqnarray}
\begin{eqnarray}\nonumber
&&\mathcal{R}_3=-\frac{2}{p+2}\int_{|x|>R}(p\Delta\phi_R-\frac{2c\phi_R'}{r})|x|^c|u|^{p+2}dx+\frac{4\textbf{p}_c}{p+2}\int_{|x|> R}|x|^c|u|^{p+2}dx,\\\nonumber
&&\mathcal{R}_4=2\int_{|x|> R}(2-\frac{\phi_R'}{r})(x\cdot\nabla V)|u|^2dx.\quad\quad\quad\quad\quad\quad\quad\quad\
\end{eqnarray}
Similarly to the argument of Lemma 2.8 in \cite{ZZ1}, which deals with the INLS$_{b,V}$ without potential, we easily get
\begin{eqnarray}\label{c5}
&&\mathcal{R}_1\leq2b\int_{|x|> R}(2-\frac{\phi'_R}{r})|x|^b|\nabla u|^2dx,\\\label{d11}
&&\mathcal{R}_2\leq C_{n,b}R^{b-2}\|u\|_2^2
\end{eqnarray}
and
\begin{equation}\nonumber
\mathcal{R}_3\leq C_{\textbf{p}_c} \int_{|x|> R}|x|^c|u|^{p+2}dx\leq C_{\textbf{p}_c}R^{c-\frac{(2n-2+b)p}{4}}\|\nabla u\|_{b,2}^{\frac{p}{2}}\|u\|^{\frac{p}{2}+2}_2,
\end{equation}
where the last inequality in $\mathcal{R}_3$ is obtained by Lemma \ref{lem3.1}.
Thus, applying the Young inequality to $\mathcal{R}_3$, we further have
\begin{equation}\label{e18}
\mathcal{R}_3\leq C_{\epsilon,\textbf{p}_c}R^{-\frac{2\textbf{p}_c-(2-b)p}{4-p}}\|u\|^{\frac{p}{2}+2}_2+C\epsilon\|\nabla u\|_{b,2}^2\|u\|^{\frac{p}{2}+2}_2.
\end{equation}

The remaining task is to control the potential part $\mathcal{R}_4$. Since $2-\frac{\phi'_R}{r}\geq0$, we use the H\"{o}lder inequality to get
\begin{eqnarray}\nonumber
\lefteqn{\mathcal{R}_4\ \leq C\int_{|x|> R}|x\cdot\nabla V||u|^2dx}\\\nonumber
&&\leq C(\int_{|x|> R}|x|^{-\frac{nb}{2}}|x\cdot\nabla V|^{\frac{n}{2}}dx)^{\frac{2}{n}}(\int_{|x|> R}|x|^{\frac{nb}{n-2}}|u|^{\frac{2n}{n-2}}dx)^{\frac{n-2}{n}},
\end{eqnarray}
which together with \eqref{c26} yields
\begin{equation}\label{e17}
\mathcal{R}_4\leq C_{b}\|x\cdot\nabla V\|_{L^{\frac{n}{2}}(|x|>R;\ |x|^{-\frac{nb}{2}}dx)}\|\nabla u\|_{b,2}^2.
\end{equation}
Therefore, putting \eqref{c5}-\eqref{e17} all together, we conclude the result of Lemma \ref{lem2}.
\end{proof}

\section{Global existence and blow-up for radial case}
In this section, we are devoted to the proof of Theorems \ref{thmm2} and \ref{cor1}.
\subsection{Global existence}
In this part, we study the global existence for the mass-critical and intercritical INLS$_{b,V}$ with radial initial data, upon the LWP assumption in $W_b^{1,2}$ stated in Section 1.

\textbf{Proof of the global parts in Theorems \ref{thmm2} and \ref{cor1}.}
Let $u\in C([0, T^\ast); W_b^{1,2})$ be the solution to the INLS$_{b,V}$ with $u_0\in  W_b^{1,2}$. In view of the conservation laws, we just need to bound the $W_b^{1,2}$-norm of $u(t)$ for any $t\in [0, T^\ast)$.

\textbf{Case 1} (Mass-critical). Since $V(x)\geq0$, we deduce from the energy conservation that
\begin{eqnarray}\nonumber
\|\nabla u\|^2_{b,2}\leq\|\nabla u\|^2_{b,2}+\int V(x)|u|^2dx=2E_{b,V}(u_0)+\frac{2}{p+2}\|u\|^{p+2}_{c,p+2}.
\end{eqnarray}
Invoking the sharp radial Gagliardo-Nirenberg inequality \eqref{a19}, we further get
\begin{eqnarray}\nonumber
\lefteqn{\|\nabla u\|^2_{b,2}\leq2E_{b,V}(u_0)+\frac{2C_{GN}}{p+2}\|\nabla u\|_{b,2}^{\frac{\textbf{p}_c}{2-b}}\|u\|_{2}^{\frac{(2-b)(p+2)-\textbf{p}_c}{2-b}}}\\\nonumber
&&\quad\quad=2E_{b,V}(u_0)+\left(\frac{\|u_0\|_2}{\|Q_{\omega}\|_2}\right)^p\|\nabla u\|_{b,2}^2.
\end{eqnarray}
Since $\|u_0\|_2<\|Q_{\omega}\|_2$, we obtain that $\|\nabla u\|_{b,2}$ is bounded for any $t\in [0, T^\ast)$, which derives the global existence in Theorem \ref{thmm2} (i).

\textbf{Case 2} (Intercritical). Multiplying both sides of $E_{b,V}(u)$ by $M(u)^\sigma$ and using the sharp radial Gagliardo-Nirenberg inequality \eqref{a19},
we have 
\begin{eqnarray}\nonumber
\lefteqn{E_{b,V}(u)M(u)^\sigma=\frac{1}{2}(\|u\|_{\dot{H}_{b,V}^1}\|u\|_{2}^\sigma)^2-\frac{1}{p+2}\|u\|_{c,p+2}^{p+2}\|u\|_2^{2\sigma}}\\\label{c14}
&&\quad\quad\quad\quad\quad\geq\frac{1}{2}\left(\|u\|_{\dot{H}_{b,V}^1}\|u\|_{2}^\sigma\right)^2-\frac{ C_{GN}}{p+2}(\|u\|_{\dot{H}_{b,V}^1}\|u\|_{2}^\sigma)^{\frac{\textbf{p}_c}{2-b}},
\end{eqnarray}
where we used the fact that $\|u\|_{\dot{H}_{b,V}^1}\geq\|\nabla u\|_{b,2}$ for all $V\geq0$.

Now for $\alpha>0$, which will be chosen later, we introduce a function
\[f(x)=\frac{1}{2}x^2-\frac{2-b}{\textbf{p}_c}\alpha^{2-\frac{\textbf{p}_c}{2-b}}x^{\frac{\textbf{p}_c}{2-b}} \quad\quad \mbox{for}\ x\in[0, \infty).\]
Differentiating the function $f$ with respect to $x$, we have
\[f'(x)=\alpha^{2-\frac{\textbf{p}_c}{2-b}}x(\alpha^{\frac{\textbf{p}_c}{2-b}-2}-x^{\frac{\textbf{p}_c}{2-b}-2}).\]
Since $\textbf{p}_c>2(2-b)$, we see that $f'(x)>0$ on $(0, \alpha)$ and $f'(x)<0$ on $(\alpha, \infty)$, which implies that $f(x)$ has a local maximum at $x_{\max}=\alpha$.

We choose $\alpha>0$ such that $\alpha=\|\nabla Q_{1}\|_{b,2}\|Q_{1}\|_{2}^\sigma$, then it follows from Pohozaev's identities \eqref{a3} that
\begin{equation}\label{c15}
f(\alpha)=\frac{\textbf{p}_c-2(2-b)}{2\textbf{p}_c}(\|\nabla Q_{1}\|_{b,2}\|Q_{1}\|_{2}^\sigma)^2=E_b(Q_{1})M(Q_{1})^\sigma.
\end{equation}
By hypothesis in Theorem \ref{cor1}, \eqref{c14}, \eqref{c15} and the conservation laws, we obtain
\[f(\|u\|_{\dot{H}_{b,V}^1}\|u\|_{2}^\sigma)\leq E_{b,V}(u)M(u)^\sigma <f(\|\nabla Q_{1}\|_{b,2}\|Q_{1}\|_{2}^\sigma)\quad \mbox{for}\ t\in[0, T^\ast).\]
Therefore, we see that
\begin{equation}\label{g1}
\mbox{either}\quad \|u\|_{\dot{H}_{b,V}^1}\|u\|_{2}^\sigma<\alpha\quad \mbox{or}\quad  \|u\|_{\dot{H}_{b,V}^1}\|u\|_{2}^\sigma>\alpha
\end{equation}
is valid for any $t\in[0, T^\ast)$.

Since $\|u_0\|_{\dot{H}_{b, V}^1}\|u_0\|_2^\sigma<\alpha$, by the continuity argument and \eqref{g1}, we have \[\|u(t)\|_{\dot{H}_{b, V}^1}\|u(t)\|_2^\sigma<\alpha\] for any $t\in[0, T^\ast)$, which proves the global existence in Theorem \ref{cor1} (i).

\subsection{Blow-up}
In this section, based on the virial identites, we prove the blow-up results for the intercritical INLS$_{b,V}$ with radial or finite variance initial data.

\textbf{Proof of Theorem \ref{thmm2} (ii).} Using \eqref{c4} and the virial identity in Corollary \ref{cor2}, we have
\begin{equation}\nonumber
\frac{d^2}{dt^2}\|u(t)\|^2_{2-b,2}=2(2-b)^2P(u(t)).
\end{equation}
Since $x\cdot\nabla V+(2-b)V\geq0$, we deduce that
\begin{eqnarray}\label{k1}
P(u(t))\leq \|\nabla u(t)\|^2_{b,2}+\int V(x)|u(t)|^2dx-\frac{\textbf{p}_c}{(2-b)(p+2)}\|u(t)\|^{p+2}_{c,p+2},
\end{eqnarray}
which implies that
\begin{equation}\nonumber
\frac{d^2}{dt^2}\|u(t)\|^2_{2-b,2}\leq 4(2-b)^2E_{b,V}(u(t))+\frac{2(2-b)^2}{p+2}(2-\frac{\textbf{p}_c}{2-b})\|u(t)\|^{p+2}_{c,p+2}.
\end{equation}
So by the energy conservation and $\textbf{p}_c=2(2-b)$, we have
\[\frac{d^2}{dt^2}\|u(t)\|^2_{2-b,2}\leq 4(2-b)^2E_{b,V}(u_0)<0.\]
Using the Glassey's argument (see \cite{GG}), we prove the finite time blow-up result for the fast-decaying and negative-energy solutions.

\textbf{Proof of Theorem \ref{cor1} (ii).} Firstly, we proceed as in the proof of Theorem \ref{cor1} (i), it follows from the assumption
\[\|u_0\|_{\dot{H}_{b, V}^1}\|u_0\|_2^\sigma>\|\nabla Q_{1}\|_{b,2}\|Q_{1}\|_2^\sigma\]
and \eqref{g1} that the lower bound \eqref{c13} of the solution $u(t)$ to the INLS$_{b,V}$ also holds.

\textbf{Case 1} ($u_0\in L^2(|x|^{2-b}dx)$). By Corollary \ref{cor2} and \eqref{c4}, we have
\begin{equation}\label{k3}
\frac{d^2}{dt^2}\|u(t)\|^2_{2-b,2}=2(2-b)^2P(u(t)).
\end{equation}
Now we claim that there exists $\delta=\delta(b, \textbf{p}_c, u_0)>0$ such that the virial functional bound
\begin{equation}\label{c16}
P(u)\leq-\delta
\end{equation}
holds for either \eqref{g42} or \eqref{b6}.

Indeed, from \eqref{k1}, it suffices to bound the integral term $\|u\|^2_{\dot{H}_{b,V}^1}-\frac{\textbf{p}_c}{(p+2)(2-b)}\|u\|^{p+2}_{c,p+2}$.
To our aim, we use the energy conservation to get
\begin{equation}\label{c18}
\|u\|^2_{\dot{H}_{b,V}^1}-\frac{\textbf{p}_c}{(p+2)(2-b)}\|u\|^{p+2}_{c,p+2}=\frac{\textbf{p}_c}{2-b}E_{b,V}(u_0)-(\frac{\textbf{p}_c}{4-2b}-1)\|u\|^2_{\dot{H}_{b,V}^1}.
\end{equation}
Since
\[E_{b,V}(u_0)<\frac{M(Q_{1})^\sigma}{M(u_0)^\sigma} E_b(Q_{1}),\]
let $\varepsilon=\varepsilon(b, \textbf{p}_c, u_0, Q_{1})>0$ be such that $\varepsilon:=\frac{1}{2}(\frac{M(Q_{1})^\sigma}{M(u_0)^\sigma}E_b(Q_{1})-E_{b,V}(u_0))$, we get
\begin{equation}\label{c19}
E_{b,V}(u_0)<\frac{1}{2}E_{b,V}(u_0)+\frac{1}{2}\frac{M(Q_{1})^\sigma}{M(u_0)^\sigma} E_b(Q_{1})=\frac{M(Q_{1})^\sigma}{M(u_0)^\sigma} E_b(Q_{1})-\varepsilon.
\end{equation}
Thus, collecting \eqref{c18}, \eqref{c19} and \eqref{c13}, we obtain
\begin{eqnarray}\nonumber
\lefteqn{\|u\|^2_{\dot{H}_{b,V}^1}-\frac{\textbf{p}_c}{(p+2)(2-b)}\|u\|^{p+2}_{c,p+2}}\\\nonumber
&&\leq\frac{\textbf{p}_c}{2-b}(\frac{M(Q_{1})^\sigma}{M(u_0)^\sigma} E_b(Q_{1})-\varepsilon)-(\frac{\textbf{p}_c}{4-2b}-1)\frac{\|Q_{1}\|_{2}^{2\sigma}}{\|u_0\|_{2}^{2\sigma}}\|\nabla Q_{1}\|^2_{b,2}\\\nonumber
&&=-\frac{\textbf{p}_c}{2-b}\varepsilon,
\end{eqnarray}
where the last equality is obtained by the fact that
\[E_b(Q_{1})=(\frac{1}{2}-\frac{2-b}{\textbf{p}_c})\|\nabla Q_{1}\|^2_{b,2}.\]
The claim \eqref{c16} follows.

At the end, combining \eqref{k3} with \eqref{c16}, we obtain that
\begin{equation}\nonumber
\frac{d^2}{dt^2}\|u(t)\|^2_{2-b,2}\leq-2(2-b)^2\delta
\end{equation}
for any $t\in[0, T^\ast)$.
This shows that the solution $u(t)$ to the INLS$_{b,V}$ blows up in finite time.

\textbf{Case 2} ($u_0$ is radial). The main blow-up mechanism is governed by an ODE equation $f^s<f',$ namely, if $s>1$, then $f$ will blow up in finite time. We proceed by contradiction and assume that the maximal existence $T^\ast=+\infty$.

To start it, we give a lower bound of blow-up rate. That is, there exist $C>0$ and $T_0>0$ such that
\begin{equation}\label{e26}
\|\nabla u(t)\|^2_{b,2}\geq Ct^2
\end{equation}
for all $t\geq T_0.$

In fact, from Lemma \ref{lem2}, if we take $\epsilon>0$ sufficiently small such that $C\epsilon<-b$ and then, we take  $R>0$ sufficiently large such that
\[C\|x\cdot\nabla V\|_{L^{\frac{n}{2}}(|x|>R;\ |x|^{-\frac{nb}{2}}dx)}<-b\]
and
\[C_{\epsilon}R^{-\frac{2\textbf{p}_c-(2-b)p}{4-p}}M(u_0)^{\frac{p}{4}+1}<2(2-b)\delta.\]
Thus we have
\[I''_{\psi_R}(t)<-2(2-b)\delta<0.\]

Now we integrate the above inequality from 0 to $t$, then
\[I'_{\psi_R}(t)\leq -2(2-b)\delta t+I'_{\psi_R}(0).\]
We may choose $T_0>0$ sufficiently large such that $I'_{\psi_R}(0)\leq (2-b)\delta T_0$. Then we get
\begin{equation}\label{e19}
I'_{\psi_R}(t)\leq -(2-b)\delta t,\quad\quad \forall\ t\geq T_0.
\end{equation}
Moreover, by Lemma \ref{lem2.1} and the H\"{o}lder inequality, we deduce
\begin{eqnarray}\nonumber
\lefteqn{|I'_{\psi_R}(t)|\ \leq2R\int|\nabla\Theta\left(\frac{x}{R}\right)||\nabla u||\overline{u}|dx}\\\nonumber
&&\quad\quad\leq 4R\int_{|x|\leq R}|\nabla u||\overline{u}|dx+CR\int_{R<|x|\leq 2R}|\nabla u||\overline{u}|dx\\\label{e20}
&&\quad\quad\leq CR^{1-\frac{b}{2}}\|\nabla u\|_{b,2}\|u\|_2,
\end{eqnarray}
where $\Theta$ is defined in \eqref{d13}.
Thus, combining \eqref{e19} with \eqref{e20}, we obtain
\[(2-b)\delta t\leq|I'_{\psi_R}(t)|\leq CR^{1-\frac{b}{2}}\|\nabla u\|_{b,2}M(u_0)^{\frac{1}{2}},\]
which arrives at \eqref{e26}.

To our aim, we also need to give another finer virial estimate about $I''_{\psi_R}(t)$, which is slight modification of \eqref{e23} in Lemma \ref{lem2}. Here we can see the difference between two cases in the treatment of the functional $P$ and the terms involving $\|\nabla u\|_{b,2}$.

By \eqref{e23}, \eqref{k1} and the energy conservation, we have
\begin{eqnarray}\nonumber
\lefteqn{I''_{\psi_R}(t)\leq4\textbf{p}_cE_{b,V}(u_0)-2(\textbf{p}_c-4+2b)(\|\nabla u\|^2_{b,2}+\int V(x)|u|^2dx)}\\\nonumber
&&\quad\ +C_{\epsilon}R^{-\frac{2\textbf{p}_c-(2-b)p}{4-p}}M(u_0)^{\frac{p}{4}+1}+CR^{b-2}M(u_0)\\\label{e22}
&&\quad\ +C\epsilon M(u_0)^{\frac{p}{4}+1}\|\nabla u\|_{b,2}^2+C\|x\cdot\nabla V\|_{L^{\frac{n}{2}}(|x|>R;\ |x|^{-\frac{nb}{2}}dx)}\|\nabla u\|_{b,2}^2.\quad\quad\quad\quad\quad
\end{eqnarray}
If we take $\epsilon>0$ sufficiently small such that $C\epsilon M(u_0)^{\frac{p}{4}+1}<\frac{1}{2}(\textbf{p}_c-4+2b)$ and take $R>0$ sufficiently large such that
\[C\|x\cdot\nabla V\|_{L^{\frac{n}{2}}(|x|>R;\ |x|^{-\frac{nb}{2}}dx)}<\frac{1}{2}(\textbf{p}_c-4+2b),\]
we then deduce from \eqref{e22} that
\begin{eqnarray}\nonumber
\lefteqn{I''_{\psi_R}(t)\leq4\textbf{p}_cE_{b,V}(u_0)-(\textbf{p}_c-4+2b)\|\nabla u\|^2_{b,2}}\\\nonumber
&&\quad\quad\quad\quad+C_{\epsilon}R^{-\frac{2\textbf{p}_c-(2-b)p}{4-p}}M(u_0)^{\frac{p}{4}+1}+CR^{b-2}M(u_0).
\end{eqnarray}
Thus, by \eqref{e26}, there exists $T_1\geq T_0$ and $\delta_0:=\frac{1}{2}(\textbf{p}_c-4+2b)>0$ such that
\begin{equation}\nonumber
I''_{\psi_R}(t)\leq-\delta_0\|\nabla u\|^2_{b,2}.
\end{equation}
We integrate the above inequality over $[T_1,\ t]$, then it follows from \eqref{e19} that
\begin{equation}\label{e25}
I'_{\psi_R}(t)\leq -\delta_0\int_{T_1}^t\|\nabla u(s)\|^2_{b,2}ds+I'_{\psi_R}(T_1)\leq -\delta_0\int_{T_1}^t\|\nabla u(s)\|^2_{b,2}ds.
\end{equation}
So, combining \eqref{e20} with \eqref{e25}, we obtain
\[\delta_0\int_{T_1}^t\|\nabla u(s)\|^2_{b,2}ds\leq |I'_{\psi_R}(t)|\leq CR^{1-\frac{b}{2}}M(u_0)^{\frac{1}{2}}\|\nabla u\|_{b,2}.\]
Define $f(t):=\int_{T_1}^t\|\nabla u(s)\|^2_{b,2}ds$ and thus
\[Af(t)^2\leq f'(t),\]
where $A:=\frac{\delta_0^2}{C^2R^{2-b}M(u_0)}$. Finally, taking $T_2>T_1$ and integrating on $[T_2,\ t]$, we get
\[A(t-T_2)\leq \frac{1}{f(T_2)}-\frac{1}{f(t)}\leq\frac{1}{f(T_2)}<\infty.\]
Due to $f(T_2)>0$, we let $t\rightarrow\infty$ and arrive at a contradiction. Therefore, the solution $u(t)$ to the INLS$_{b,V}$ blows up in finite time. We complete the proof of Theorem \ref{cor1} (ii).

\section{Variational Characterization}
In the first two subsections, we show the properties of minimizing problem \eqref{g6} for each $(\alpha, \beta)$ in \eqref{g2}, and give the proof of Proposition \ref{prop1}.
\subsection{Minimizing problems}
For each $(\alpha, \beta)\in\Bbb{R}^2$, we define the following functional:
\begin{equation}\label{g22}
\widetilde{J}_{\omega,0}^{\alpha, \beta}(\phi):=S_{\omega,0}(\phi)-\frac{K_{\omega,0}^{\alpha, \beta}(\phi)}{\mu},
\end{equation}
where $\mu:=\max\{2\alpha+(2-b-n)\beta,\ 2\alpha-n\beta\}$. 

\begin{lemma}\label{lemma5}
Let $\beta\in\Bbb{R},\ b<2$ and $\omega>0$. Then
\begin{equation}\nonumber
\mu\widetilde{J}_{\omega,0}^{\alpha, \beta}(\phi)\geq\frac{(2-b)|\beta|}{2}\min\{\|\nabla \phi\|_{b,2}^2,\ \omega\|\phi\|_2^2\}+\frac{p\alpha-(2-b+c)\beta+(2-b)\widetilde{\beta}}{p+2}\|\phi\|_{c,p+2}^{p+2},
\end{equation}
where
\begin{equation}\label{}
\widetilde{\beta}=\left\{
\begin{aligned}
 &\beta,\quad \mbox{if}\ \beta<0,\\\nonumber
 &0,\quad\ \mbox{if}\ \beta\geq0.
\end{aligned}\right.
\end{equation}
\end{lemma}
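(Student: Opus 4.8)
The plan is to expand $\mu\widetilde{J}_{\omega,0}^{\alpha,\beta}(\phi)=\mu S_{\omega,0}(\phi)-K_{\omega,0}^{\alpha,\beta}(\phi)$ directly using the explicit formulas \eqref{g1} and \eqref{g3} (with $V=0$), and then estimate each resulting coefficient in terms of the two ``kinetic'' quantities $\|\nabla\phi\|_{b,2}^2$, $\omega\|\phi\|_2^2$ and the ``potential'' quantity $\|\phi\|_{c,p+2}^{p+2}$. Writing $A:=\|\nabla\phi\|_{b,2}^2$, $B:=\omega\|\phi\|_2^2$, $C:=\|\phi\|_{c,p+2}^{p+2}$, we have $\mu S_{\omega,0}(\phi)=\tfrac{\mu}{2}(A+B)-\tfrac{\mu}{p+2}C$ and $K_{\omega,0}^{\alpha,\beta}(\phi)=\tfrac{2\alpha+(2-b-n)\beta}{2}A+\tfrac{2\alpha-n\beta}{2}B-\tfrac{\alpha(p+2)-(n+c)\beta}{p+2}C$, so that
\[
\mu\widetilde{J}_{\omega,0}^{\alpha,\beta}(\phi)=\tfrac{\mu-2\alpha-(2-b-n)\beta}{2}A+\tfrac{\mu-2\alpha+n\beta}{2}B+\tfrac{-\mu+\alpha(p+2)-(n+c)\beta}{p+2}C.
\]
By the definition $\mu=\max\{2\alpha+(2-b-n)\beta,\ 2\alpha-n\beta\}$, exactly one of the first two coefficients is zero and the other equals $\tfrac{1}{2}|(2-b-n)\beta-(-n\beta)|=\tfrac{(2-b)|\beta|}{2}$; that nonzero coefficient multiplies whichever of $A$, $B$ is retained, hence is at least $\tfrac{(2-b)|\beta|}{2}\min\{A,B\}$, which is the first term on the right-hand side. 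The sign of $\beta$ determines which of $A$, $B$ survives, but the bound by the minimum is sign-agnostic.

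It remains to check the coefficient of $C$, namely $\tfrac{1}{p+2}\bigl(\alpha(p+2)-(n+c)\beta-\mu\bigr)$, equals $\tfrac{1}{p+2}\bigl(p\alpha-(2-b+c)\beta+(2-b)\widetilde\beta\bigr)$. This is the one genuinely case-dependent computation. When $\beta\ge0$ one has $\mu=2\alpha+(2-b-n)\beta$ (using $b<2$, so $(2-b)\beta\ge0$ pushes the first candidate up), whence $\alpha(p+2)-(n+c)\beta-\mu=p\alpha-(2-b+c)\beta$, matching the claim since $\widetilde\beta=0$. When $\beta<0$ one has $\mu=2\alpha-n\beta$, whence $\alpha(p+2)-(n+c)\beta-\mu=p\alpha-c\beta=p\alpha-(2-b+c)\beta+(2-b)\beta$, matching since $\widetilde\beta=\beta$. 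In both cases the coefficient of $C$ is exactly as stated (with equality, not merely an inequality), so the displayed bound follows—indeed the lemma's inequality can be upgraded to an identity once the $\min$ is resolved by the sign of $\beta$.

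I do not anticipate a serious obstacle here: the statement is essentially a bookkeeping identity extracted from the quadratic-plus-$L^{p+2}$ structure of $S_{\omega,0}$ and $K_{\omega,0}^{\alpha,\beta}$, and the only subtlety is keeping the two branches of $\mu$ straight and verifying that the leftover kinetic coefficient is nonnegative and bounded below by $\tfrac{(2-b)|\beta|}{2}$, which uses only $b<2$. The hypothesis $\omega>0$ is used merely to make $\omega\|\phi\|_2^2$ a legitimate nonnegative quantity so that $\min\{\|\nabla\phi\|_{b,2}^2,\ \omega\|\phi\|_2^2\}\ge0$; it plays no deeper role. If one wanted, one could phrase the whole argument as: group $\mu\widetilde J$ into (retained kinetic term) $+$ (coefficient)$\cdot C$, identify the retained term via the definition of $\mu$, and simplify the $C$-coefficient by the two-case split on $\operatorname{sign}\beta$.
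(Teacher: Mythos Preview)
Your proposal is correct and follows essentially the same approach as the paper: both expand $\mu\widetilde{J}_{\omega,0}^{\alpha,\beta}(\phi)=\mu S_{\omega,0}(\phi)-K_{\omega,0}^{\alpha,\beta}(\phi)$ directly, split into the cases $\beta\ge 0$ and $\beta<0$ to identify $\mu$, and then read off the three coefficients. Your additional remark that the stated inequality is in fact an identity once the $\min$ is resolved by the sign of $\beta$ is accurate and slightly sharper than what the paper records.
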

\begin{proof}
By the definition of $S_{\omega,0}$ and $K_{\omega,0}^{\alpha, \beta}$, we observe that
\begin{eqnarray}\nonumber
\lefteqn{\mu\widetilde{J}_{\omega,0}^{\alpha, \beta}(\phi)=\frac{1}{2}[\mu-2\alpha-(2-b-n)\beta]\|\nabla \phi\|_{b,2}^2+\frac{\omega}{2}[\mu-(2\alpha-n\beta)]\|\phi\|_2^2}\\\nonumber
&&\quad\quad\quad-\frac{1}{p+2}[\mu-(p+2)\alpha+(n+c)\beta]\|\phi\|_{c, p+2}^{p+2}.\quad\quad\quad\quad\quad\quad\quad
\end{eqnarray}
For any $\beta\in\Bbb{R}$, one can easily check that
\begin{equation}\label{}
[\mu-2\alpha-(2-b-n)\beta]\|\nabla \phi\|_{b,2}^2=\left\{
\begin{aligned}
 &-(2-b)\beta\|\nabla \phi\|_{b,2}^2,\quad \beta<0,\\\nonumber
 &0,\quad\quad\quad\quad\quad\quad\quad\quad\ \ \beta\geq0,
\end{aligned}\right.
\end{equation}
\begin{equation}\label{}
[\mu-(2\alpha-n\beta)]\|\phi\|_2^2=\left\{
\begin{aligned}
 & 0,\quad\quad\quad\quad\quad\quad \beta<0,\\\nonumber
 &(2-b)\beta\|\phi\|_2^2,\quad \beta\geq0,\quad\quad\quad\quad\quad\quad\quad\quad
\end{aligned}\right.
\end{equation}
\begin{equation}\label{}
[\mu-(p+2)\alpha+(n+c)\beta]\|\phi\|_{c, p+2}^{p+2}=\left\{
\begin{aligned}
 & (-p\alpha+c\beta)\|\phi\|_{c, p+2}^{p+2},\quad\quad\quad\quad\ \beta<0,\\\nonumber
 &[-p\alpha+(2-b+c)\beta]\|\phi\|_{c, p+2}^{p+2},\ \beta\geq0.
\end{aligned}\right.
\end{equation}
Therefore, we have
\begin{equation}\nonumber
\mu\widetilde{J}_{\omega,0}^{\alpha, \beta}(\phi)\geq\frac{(2-b)|\beta|}{2}\min\{\|\nabla \phi\|_{b,2}^2,\ \omega\|\phi\|_2^2\}+\frac{p\alpha-(2-b+c)\beta+(2-b)\widetilde{\beta}}{p+2}\|\phi\|_{c,p+2}^{p+2}.
\end{equation}
The proof of Lemma \ref{lemma5} is completed.
\end{proof}

Next, we prove the positivity of $K_{\omega,V}^{\alpha,\beta}$ near the origin in $W_b^{1,2}$.
\begin{lemma}\label{lemma1}
Let $2-n<b<2,\ b-2<c\leq0,\ 2(2-b)<\emph{\textbf{p}}_c\leq(2-b)(p+2),\ V\geq0,\ x\cdot\nabla V\leq0$ and let $(\alpha, \beta)$ satisfy \eqref{g2}. For $\omega>0$, if the uniform $L^2$-bounded sequence $\{\phi_n\}_{n\in\Bbb{N}}\in W_b^{1,2}\backslash\{0\}$ satisfies $\|\nabla\phi_n\|_{b,2}\rightarrow0$ as $n\rightarrow\infty$, then $K_{\omega,V}^{\alpha,\beta}(\phi_n)>0$ for sufficiently large $n$.
\end{lemma}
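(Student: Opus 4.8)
The plan is to show that the nonlinear term $\|\phi_n\|_{c,p+2}^{p+2}$ is negligible compared to the quadratic terms in $K_{\omega,V}^{\alpha,\beta}(\phi_n)$, so that the sign of $K_{\omega,V}^{\alpha,\beta}(\phi_n)$ is dictated by the positive quadratic part. First I would rewrite $K_{\omega,V}^{\alpha,\beta}(\phi_n)$ by separating the quadratic and nonlinear contributions: using \eqref{g3},
\begin{equation}\nonumber
K_{\omega,V}^{\alpha,\beta}(\phi_n)=\mathcal{Q}(\phi_n)-\frac{\alpha(p+2)-(n+c)\beta}{p+2}\|\phi_n\|_{c,p+2}^{p+2},
\end{equation}
where $\mathcal{Q}(\phi_n)$ collects all quadratic terms. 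Under the hypotheses $V\geq0$ and $x\cdot\nabla V\leq0$, together with $(\alpha,\beta)$ in the range \eqref{g2} (so $2\alpha+(2-b-n)\beta\geq(2-b)\beta\geq0$ when $\beta\geq0$, and $2\alpha-n\beta\geq0$), each of these quadratic pieces has a favorable sign, and in fact
\begin{equation}\nonumber
\mathcal{Q}(\phi_n)\ \geq\ \frac{2\alpha+(2-b-n)\beta}{2}\|\nabla\phi_n\|_{b,2}^2+\frac{2\alpha-n\beta}{2}\,\omega\|\phi_n\|_2^2.
\end{equation}
The subtle point here is that at least one of the coefficients $2\alpha+(2-b-n)\beta$ and $2\alpha-n\beta$ is strictly positive (indeed $\alpha>0$ forces $\mu>0$), so $\mathcal{Q}(\phi_n)$ bounds either $\|\nabla\phi_n\|_{b,2}^2$ or $\|\phi_n\|_2^2$ from below up to a constant; I would need to argue that the relevant one is not degenerate, using $\alpha>0$ and the constraint $2\alpha-n\beta\ge 0$.

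Next I would control the nonlinear term via the Gagliardo--Nirenberg inequality \eqref{g7} (the nonradial version, valid here since $b-2<c\le 0$ and $2(2-b)<\textbf{p}_c\le(2-b)(p+2)$):
\begin{equation}\nonumber
\|\phi_n\|_{c,p+2}^{p+2}\leq C\|\nabla\phi_n\|_{b,2}^{\frac{\textbf{p}_c}{2-b}}\|\phi_n\|_2^{\frac{(2-b)(p+2)-\textbf{p}_c}{2-b}}.
\end{equation}
Since $\{\phi_n\}$ is uniformly bounded in $L^2$ and $\|\nabla\phi_n\|_{b,2}\to0$, and crucially $\frac{\textbf{p}_c}{2-b}>2$ (this is exactly where $\textbf{p}_c>2(2-b)$ is used), the right-hand side is $o\!\left(\|\nabla\phi_n\|_{b,2}^2\right)$. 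Combining with the quadratic lower bound, if the positive coefficient in $\mathcal{Q}(\phi_n)$ is the one multiplying $\|\nabla\phi_n\|_{b,2}^2$, we directly get $K_{\omega,V}^{\alpha,\beta}(\phi_n)\geq \frac{1}{2}(2\alpha+(2-b-n)\beta)\|\nabla\phi_n\|_{b,2}^2>0$ for large $n$. In the remaining case, where only $2\alpha-n\beta>0$ (which forces $\beta>0$ and hence $2\alpha+(2-b-n)\beta\geq(2-b)\beta>0$ as well, so this case does not actually occur in isolation), a short case analysis closes the argument; alternatively one may invoke Lemma \ref{lemma5} applied to $\widetilde{J}_{\omega,0}^{\alpha,\beta}$ to absorb the $L^2$-term, noting $V\ge 0$ and $x\cdot\nabla V\le 0$ only help.

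The main obstacle I anticipate is the bookkeeping of signs in $\mathcal{Q}(\phi_n)$ and verifying that the quadratic form is coercive in the $\dot H^1_b$-direction under the constraint \eqref{g2} — in particular ruling out the degenerate situation where $2\alpha+(2-b-n)\beta=0$, which would leave only the $L^2$-term and break the comparison with the nonlinear term (since the latter carries a positive power of $\|\phi_n\|_2$ but a power of $\|\nabla\phi_n\|_{b,2}$ that tends to $0$). This is handled by observing that $2\alpha+(2-b-n)\beta=0$ together with $2\alpha-n\beta\geq0$ and $\alpha>0$, $\beta\geq0$ forces $\beta>0$ and $(2-b)\beta=2\alpha-n\beta+ n\beta -(n-2+b)\beta\ \ge\ 0$ trivially, but then $2\alpha-n\beta = (2-b-n)\beta+n\beta - 2\alpha + 2\alpha = \cdots$; cleaner is: $2\alpha+(2-b-n)\beta - (2\alpha-n\beta) = (2-b)\beta\ge 0$, so $2\alpha+(2-b-n)\beta\ge 2\alpha-n\beta\ge 0$, and if the former vanishes so does the latter, giving $\alpha=\beta=0$, contradicting $\alpha>0$. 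Hence $2\alpha+(2-b-n)\beta>0$ always, and the $\dot H^1_b$-coercivity is genuine, which finishes the proof.
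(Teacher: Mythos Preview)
Your proposal is correct and follows essentially the same route as the paper: drop the favorable quadratic terms using $V\ge0$, $x\cdot\nabla V\le0$, and \eqref{g2}; apply the nonradial Gagliardo--Nirenberg inequality \eqref{g7}; then use $\textbf{p}_c>2(2-b)$ so that $\|\phi_n\|_{c,p+2}^{p+2}=o(\|\nabla\phi_n\|_{b,2}^2)$. The only cosmetic difference is that the paper discards all quadratic terms except $\frac{2\alpha+(2-b-n)\beta}{2}\|\nabla\phi_n\|_{b,2}^2$ from the outset, whereas you initially retain the full quadratic form and then argue non-degeneracy; your closing observation that $2\alpha+(2-b-n)\beta=(2\alpha-n\beta)+(2-b)\beta>0$ under \eqref{g2} is exactly what makes the paper's shortcut legitimate, so the two arguments coincide.
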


\begin{proof}
Since $V\geq0,\ x\cdot\nabla V\leq0$ and \eqref{g2}, we use the Gagliardo-Nirenberg inequality \eqref{g7} to get
\begin{eqnarray}\nonumber
\lefteqn{K_{\omega,V}^{\alpha,\beta}(\phi)\geq\frac{2\alpha+(2-b-n)\beta}{2}\|\nabla\phi\|^2_{b,2}-\frac{\alpha(p+2)-(n+c)\beta}{p+2}\|\phi\|^{p+2}_{c,p+2}}\\\nonumber
&&\quad\quad \geq \frac{2\alpha+(2-b-n)\beta}{2}\|\nabla\phi\|^2_{b,2}-\frac{\alpha(p+2)-(n+c)\beta}{(p+2)C}\|\nabla\phi\|^{\frac{\textbf{p}_c}{2-b}}_{b,2}\|\phi\|^{\frac{(2-b)(p+2)-\textbf{p}_c}{2-b}}_{2},
\end{eqnarray}
which is satisfied by the relation
\[\alpha(p+2)-(n+c)\beta\geq \alpha p-(2-b+c)\beta\geq0.\]

Since $\{\phi_n\}_{n\in\Bbb{N}}$ is bounded in $L^2$, there exists a constant $C_0>0$ such that
\[C_0:=\sup_{n\in\Bbb{N}}\|\phi_n\|_2<\infty.\]
By the hypothesis $\|\nabla\phi_n\|_{b,2}\rightarrow0$ as $n\rightarrow\infty$, there exists a large $N\in\Bbb{N}$ such that for $n\geq N$, the estimate holds
\[\|\nabla\phi_n\|_{b,2}\leq\left(\frac{2\alpha+(2-b-n)\beta}{4 CC_0^{p+2-\frac{\textbf{p}_c}{2-b}}}\frac{p+2}{\alpha(p+2)-(n+c)\beta}\right)^{\frac{2-b}{\textbf{p}_c-2(2-b)}}.\]
Therefore, combining the above three estimates, we obtain
\begin{eqnarray}\nonumber
\lefteqn{K_{\omega,V}^{\alpha,\beta}(\phi_n)\geq\|\nabla\phi_n\|^2_{b,2}}\\\nonumber
&&\quad\quad\quad \times\left(\frac{2\alpha+(2-b-n)\beta}{2}-\frac{\alpha(p+2)-(n+c)\beta}{p+2}CC_0^{p+2-\frac{\textbf{p}_c}{2-b}}\|\nabla\phi_n\|^{\frac{\textbf{p}_c}{2-b}-2}_{b,2}\right)\\\nonumber
&&\quad\quad\ \geq \frac{2\alpha+(2-b-n)\beta}{4}\|\nabla\phi_n\|^2_{b,2}>0,
\end{eqnarray}
which finishes the proof of this lemma.
\end{proof}

As a consequence, we claim that
\begin{equation}\label{g9}
K_{\omega,V}^{\alpha,\beta}(\phi_\lambda^{\alpha,\beta})>0
\end{equation}
for sufficiently small $\lambda<0$.
Indeed, from a simple computation
\[\|\nabla\phi_\lambda^{\alpha,\beta}\|_{b,2}=e^{\alpha\lambda+\frac{(2-b-n)\beta\lambda}{2}}\|\nabla\phi\|_{b,2},\]
we obtain that $\lim_{\lambda\rightarrow-\infty}\|\nabla\phi_\lambda^{\alpha,\beta}\|_{b,2}=0$, which together with Lemma \ref{lemma1} concludes \eqref{g9}.

Now we consider the minimizing problem $m_{\omega,0}^{\alpha,\beta}$ for each $(\alpha, \beta)$ in \eqref{g2}. To do it, we denote the set of nontrivial solutions to the equation \eqref{c1} by
\[\mathcal{A}_\omega:=\{\phi\in W_b^{1,2}\backslash\{0\}:\ S'_{\omega,0}(\phi)=0\}.\]
The set of ground states is denoted by $\mathcal{G}_\omega$:
\[\mathcal{G}_\omega:=\{\phi\in \mathcal{A}_\omega:\ S_{\omega,0}(\phi)\leq S_{\omega,0}(v)\quad \mbox{for\ all}\ v\in\mathcal{A}_\omega\}.\]
We can solve the minimizing problem \eqref{g6} in the case $V=0$.
\begin{pro}\label{pro2}
Let $n\geq3,\ 2-n<b<2,\ b-2< c\leq\min\{\frac{nb}{n-2},0\},\ 2(2-b)<\emph{\textbf{p}}_c<(2-b)(p+2)$ and let $(\alpha, \beta)$ satisfy \eqref{g2}. Then for $\omega>0$, there exists $\phi\in W_b^{1,2}\backslash\{0\}$ such that
\[S_{\omega,0}(\phi)=m_{\omega,0}^{\alpha,\beta}\quad\  \mbox{and}\quad\  K_{\omega,0}^{\alpha, \beta}(\phi)=0.\]

\end{pro}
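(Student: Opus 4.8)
The plan is to solve the constrained minimization problem \eqref{g6} for $V=0$ by a direct method, using the compact embedding Lemma \ref{lemma6} to extract a strongly convergent minimizing sequence and then a Lagrange-multiplier argument to show the multiplier is zero. First I would record some elementary positivity facts. By Lemma \ref{lemma5} and the range \eqref{g2} (in particular $\beta\ge 0$, so $\widetilde\beta=0$, and $2\alpha-n\beta\ge0$ together with $\textbf{p}_c<(2-b)(p+2)$ forcing $p\alpha-(2-b+c)\beta>0$ after checking the sign), we get $\widetilde J_{\omega,0}^{\alpha,\beta}(\phi)\ge c_0\bigl(\min\{\|\nabla\phi\|_{b,2}^2,\omega\|\phi\|_2^2\}+\|\phi\|_{c,p+2}^{p+2}\bigr)$ for some $c_0>0$. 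On the constraint set $K_{\omega,0}^{\alpha,\beta}(\phi)=0$ this gives $S_{\omega,0}(\phi)=\widetilde J_{\omega,0}^{\alpha,\beta}(\phi)\ge c_0(\cdots)>0$, so $m_{\omega,0}^{\alpha,\beta}\ge0$; moreover, combining with Lemma \ref{lemma1} (which shows $K_{\omega,0}^{\alpha,\beta}>0$ near the origin) one sees the constraint set stays a uniform distance away from $0$ in $W_b^{1,2}$, hence $m_{\omega,0}^{\alpha,\beta}>0$. Finiteness of $m_{\omega,0}^{\alpha,\beta}$ follows since for any fixed $\phi$ with $\|\phi\|_{c,p+2}\ne0$ the scaling $\phi\mapsto\phi_\lambda^{\alpha,\beta}$ makes $K_{\omega,0}^{\alpha,\beta}$ change sign (it is positive for $\lambda\to-\infty$ by \eqref{g9} and negative for $\lambda\to+\infty$ since the nonlinear term dominates when $p\alpha-(2-b+c)\beta$ combined with the leading exponent wins), so the constraint set is nonempty.

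Next I would take a minimizing sequence $\{\phi_n\}\subset W_b^{1,2}\setminus\{0\}$ with $K_{\omega,0}^{\alpha,\beta}(\phi_n)=0$ and $S_{\omega,0}(\phi_n)\to m_{\omega,0}^{\alpha,\beta}$. The coercivity estimate above (via $\widetilde J$) bounds $\|\nabla\phi_n\|_{b,2}$, $\|\phi_n\|_2$ and $\|\phi_n\|_{c,p+2}$ uniformly, so $\{\phi_n\}$ is bounded in $W_b^{1,2}$. Passing to a subsequence, $\phi_n\rightharpoonup\phi$ weakly in $W_b^{1,2}$, and by the compact embedding $W_b^{1,2}\hookrightarrow L^{p+2}(|x|^c\,dx)$ of Lemma \ref{lemma6} (applicable since $2(2-b)<\textbf{p}_c<(2-b)(p+2)$ and $b-2<c\le\frac{nb}{n-2}$), we get $\|\phi_n\|_{c,p+2}\to\|\phi\|_{c,p+2}$. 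Since $\|\phi_n\|_{c,p+2}$ is bounded below away from $0$ (otherwise $\|\nabla\phi_n\|_{b,2}\to0$ along a subsequence, contradicting Lemma \ref{lemma1} and $K_{\omega,0}^{\alpha,\beta}(\phi_n)=0$), the limit satisfies $\phi\ne0$. By weak lower semicontinuity of $\|\nabla\cdot\|_{b,2}$ and $\|\cdot\|_2$ we have $K_{\omega,0}^{\alpha,\beta}(\phi)\le\liminf K_{\omega,0}^{\alpha,\beta}(\phi_n)=0$ and likewise $S_{\omega,0}(\phi)\le m_{\omega,0}^{\alpha,\beta}$.

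The remaining — and I expect the main — obstacle is to upgrade $K_{\omega,0}^{\alpha,\beta}(\phi)\le0$ to the equality $K_{\omega,0}^{\alpha,\beta}(\phi)=0$. If $K_{\omega,0}^{\alpha,\beta}(\phi)<0$, I would rescale: since $K_{\omega,0}^{\alpha,\beta}(\phi_\lambda^{\alpha,\beta})>0$ for $\lambda$ very negative (by \eqref{g9}) and is negative at $\lambda=0$, there is $\lambda_0<0$ with $K_{\omega,0}^{\alpha,\beta}(\phi_{\lambda_0}^{\alpha,\beta})=0$; the function $\lambda\mapsto S_{\omega,0}(\phi_\lambda^{\alpha,\beta})=\widetilde J_{\omega,0}^{\alpha,\beta}(\phi_\lambda^{\alpha,\beta})+\frac1\mu K_{\omega,0}^{\alpha,\beta}(\phi_\lambda^{\alpha,\beta})$ should be shown to be strictly increasing on the relevant range (using the explicit exponential dependence of each term and the sign of the coefficients in \eqref{g3}), so $S_{\omega,0}(\phi_{\lambda_0}^{\alpha,\beta})<S_{\omega,0}(\phi)\le m_{\omega,0}^{\alpha,\beta}$, contradicting the definition of the infimum. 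Hence $K_{\omega,0}^{\alpha,\beta}(\phi)=0$, and then $S_{\omega,0}(\phi)\le m_{\omega,0}^{\alpha,\beta}$ forces $S_{\omega,0}(\phi)=m_{\omega,0}^{\alpha,\beta}$, which also upgrades the weak convergence $\|\nabla\phi_n\|_{b,2}\to\|\nabla\phi\|_{b,2}$, $\|\phi_n\|_2\to\|\phi\|_2$ and so $\phi_n\to\phi$ strongly. The delicate point is verifying the strict monotonicity of $\lambda\mapsto S_{\omega,0}(\phi_\lambda^{\alpha,\beta})$ in the direction that produces the contradiction; this requires carefully tracking that each exponent appearing — $2\alpha+(2-b-n)\beta$ for the gradient term, $2\alpha-n\beta$ for the $L^2$ and potential terms, and $(p+2)\alpha-(n+c)\beta$ for the nonlinearity — interacts with the constraint $K_{\omega,0}^{\alpha,\beta}(\phi_\lambda^{\alpha,\beta})=0$ so that the derivative of $S$ has a definite sign, which is exactly where the structural hypotheses \eqref{g2} on $(\alpha,\beta)$ are used.
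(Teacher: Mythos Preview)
Your overall strategy matches the paper's: take a minimizing sequence, use $\widetilde J_{\omega,0}^{\alpha,\beta}$ and $K_{\omega,0}^{\alpha,\beta}=0$ to get boundedness in $W_b^{1,2}$, extract a weak limit, upgrade to strong $L^{p+2}(|x|^cdx)$ convergence via Lemma~\ref{lemma6}, rule out $\phi=0$ via Lemma~\ref{lemma1}, and finally show $K_{\omega,0}^{\alpha,\beta}(\phi)=0$. The first several steps are fine.

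The genuine gap is in your upgrade step. You propose to show that $\lambda\mapsto S_{\omega,0}(\phi_\lambda^{\alpha,\beta})$ is strictly \emph{increasing} on $[\lambda_0,0]$, in order to conclude $S_{\omega,0}(\phi_{\lambda_0}^{\alpha,\beta})<S_{\omega,0}(\phi)$. But by definition $\frac{d}{d\lambda}S_{\omega,0}(\phi_\lambda^{\alpha,\beta})=K_{\omega,0}^{\alpha,\beta}(\phi_\lambda^{\alpha,\beta})$, and if $\lambda_0$ is the first zero of $K$ to the left of $0$ (which is how the intermediate value argument naturally produces it), then $K_{\omega,0}^{\alpha,\beta}(\phi_\lambda^{\alpha,\beta})<0$ on $(\lambda_0,0]$. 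Hence $S_{\omega,0}(\phi_\lambda^{\alpha,\beta})$ is strictly \emph{decreasing} there, giving $S_{\omega,0}(\phi_{\lambda_0}^{\alpha,\beta})>S_{\omega,0}(\phi)$, which is consistent with $m_{\omega,0}^{\alpha,\beta}\le S_{\omega,0}(\phi_{\lambda_0}^{\alpha,\beta})$ and $S_{\omega,0}(\phi)\le m_{\omega,0}^{\alpha,\beta}$ and yields no contradiction.

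The fix is to run the monotonicity argument on $\widetilde J_{\omega,0}^{\alpha,\beta}$ rather than on $S_{\omega,0}$. The paper does this with the simpler scalar rescaling $\phi\mapsto\lambda\phi$: by Lemma~\ref{lemma1} there is $\lambda_0\in(0,1)$ with $K_{\omega,0}^{\alpha,\beta}(\lambda_0\phi)=0$, and since $\widetilde J_{\omega,0}^{\alpha,\beta}$ is a nonnegative combination of $\|\cdot\|_2^2$ and $\|\cdot\|_{c,p+2}^{p+2}$ (for $\beta\ge0$), one gets the strict inequality $\widetilde J_{\omega,0}^{\alpha,\beta}(\lambda_0\phi)<\lambda_0^2\,\widetilde J_{\omega,0}^{\alpha,\beta}(\phi)$. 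Crucially, $\widetilde J_{\omega,0}^{\alpha,\beta}$ (not $S_{\omega,0}$) is the functional that is weakly lower semicontinuous along the minimizing sequence, because its $L^{p+2}$ term carries a \emph{positive} sign and converges strongly; hence $\widetilde J_{\omega,0}^{\alpha,\beta}(\phi)\le\liminf_n \widetilde J_{\omega,0}^{\alpha,\beta}(\phi_n)=m_{\omega,0}^{\alpha,\beta}$. Combining gives $m_{\omega,0}^{\alpha,\beta}\le S_{\omega,0}(\lambda_0\phi)=\widetilde J_{\omega,0}^{\alpha,\beta}(\lambda_0\phi)<\lambda_0^2\,m_{\omega,0}^{\alpha,\beta}\le m_{\omega,0}^{\alpha,\beta}$, the desired contradiction. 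Your $(\alpha,\beta)$-scaling could also be salvaged the same way (each term of $\widetilde J_{\omega,0}^{\alpha,\beta}$ scales as $e^{\kappa\lambda}$ with $\kappa\ge0$, and the $L^{p+2}$ piece has $\kappa>0$), but the point is that the comparison must go through $\widetilde J_{\omega,0}^{\alpha,\beta}$, not $S_{\omega,0}$.
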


\begin{proof}
We split the proof of Proposition \ref{pro2} into the following three steps.

\textbf{Step 1.} We first prove that the minimizing sequence of \eqref{g6} with $V=0$ is bounded in $W_b^{1,2}$ and the weak limit of it is nonzero.

Let $\{\phi_n\}_{n\in\Bbb{N}}\subset W_b^{1,2}\backslash\{0\}$ be a minimizing sequence of \eqref{g6} with $V=0$, namely
\[K_{\omega,0}^{\alpha, \beta}(\phi_n)=0\quad \mbox{and}\quad S_{\omega,0}(\phi_n)\rightarrow m_{\omega,0}^{\alpha,\beta}\quad\quad \mbox{as}\quad n\rightarrow\infty.\]
Thus it follows from \eqref{g22} that
\begin{equation}\label{b5}
\widetilde{J}_{\omega,0}^{\alpha, \beta}(\phi_n)=S_{\omega,0}(\phi_n)\rightarrow m_{\omega,0}^{\alpha,\beta}\quad\quad \mbox{as}\quad  n\rightarrow\infty.
\end{equation}
Since $(\alpha, \beta)$ satisfies \eqref{g2} and $\textbf{p}_c>2(2-b)$, Lemma \ref{lemma5} and \eqref{b5} yield that $\|\phi_n\|_{c,p+2}^{p+2}$ is bounded. Since $K_{\omega,0}^{\alpha, \beta}(\phi_n)=0$, then
\[\frac{2\alpha+(2-b-n)\beta}{2}\|\nabla\phi_n\|_{b,2}^2+\frac{(2\alpha-n\beta)\omega}{2}\|\phi_n\|_2^2\leq C\]
for some $C>0$, which shows that $\{\phi_n\}_{n\in\Bbb{N}}$ is bounded in $W^{1,2}_b$.
Therefore there exists $\phi\in W_b^{1,2}$ such that, up to a subsequence, $\phi_n\rightharpoonup \phi$ weakly in $W^{1,2}_b$. From Lemma \ref{lemma6}, we obtain
\begin{equation}\label{b29}
\phi_n\rightarrow \phi\quad \mbox{in}\quad L^{p+2}(|x|^cdx)
\end{equation}
for all $2(2-b)<\textbf{p}_c<(2-b)(p+2)$.

Now we claim that $\phi\neq0$. Assume that $\phi=0$ by contradiction. Then we deduce from $K_{\omega,0}^{\alpha, \beta}(\phi_n)=0$ and \eqref{b29} that
\[\frac{2\alpha+(2-b-n)\beta}{2}\|\nabla\phi_n\|_{b,2}^2+\frac{(2\alpha-n\beta)\omega}{2}\|\phi_n\|_2^2\rightarrow0\quad\quad \mbox{as}\quad  n\rightarrow\infty.\]
Thus, Lemma \ref{lemma1} implies that $K_{\omega,0}^{\alpha,\beta}(\phi_n)>0$ for sufficiently large $n$, which contradicts $K_{\omega,0}^{\alpha,\beta}(\phi_n)=0$. Hence $\phi\neq0$.

\textbf{Step 3.} We prove that $\phi$ is the minimizer of \eqref{g6} with $V=0$.

Denote $\phi_n=\phi+r_n$, where $r_n\rightharpoonup 0$ in $W_b^{1,2}$ as $n\rightarrow\infty$. By the definition of $K_{\omega,0}^{\alpha, \beta}$ and a simple computation, we have
\begin{equation}\nonumber
K_{\omega,0}^{\alpha, \beta}(\phi)=K_{\omega,0}^{\alpha, \beta}(\phi_n)+\frac{2\alpha+(2-b-n)\beta}{2}\|\nabla r_n\|_{b,2}^2+\frac{(2\alpha-n\beta)\omega}{2}\|r_n\|_2^2+o_n(1),
\end{equation}
where $o_n(1)\rightarrow0$ as $n\rightarrow\infty.$ This implies that
\[K_{\omega,0}^{\alpha, \beta}(\phi)\leq \liminf_{n\rightarrow\infty}K_{\omega,0}^{\alpha, \beta}(\phi_n)=0.\]
So $K_{\omega,0}^{\alpha, \beta}(\phi)\leq0.$

Now we assume that $K_{\omega,0}^{\alpha, \beta}(\phi)<0$. From Lemma \ref{lemma1}, we easily deduce that $K_{\omega,0}^{\alpha,\beta}(\lambda\phi)>0$ for sufficiently small $\lambda\in(0, 1)$. This together with the continuity of $K_{\omega,0}^{\alpha,\beta}(\lambda\phi)$ in $\lambda$ implies that there exists $\lambda_0\in(0,1)$ such that $K_{\omega,0}^{\alpha, \beta}(\lambda_0\phi)=0$.
By the definition of $m_{\omega,0}^{\alpha, \beta}$ and \eqref{g22}, we have
\begin{equation}\label{g29}
m_{\omega,0}^{\alpha, \beta}\leq S_{\omega,0}(\lambda_0\phi)=\widetilde{J}_{\omega,0}^{\alpha, \beta}(\lambda_0\phi)<\lambda_0^2\widetilde{J}_{\omega,0}^{\alpha, \beta}(\phi).
\end{equation}
Moreover, by a simple computation, we obtain
\[\widetilde{J}_{\omega,0}^{\alpha, \beta}(\phi)=\widetilde{J}_{\omega,0}^{\alpha, \beta}(\phi_n)+o_n(1),\]
where $o_n(1)\rightarrow0$ as $n\rightarrow\infty$. This together with \eqref{b5} yields
\begin{equation}\label{g30}
\widetilde{J}_{\omega,0}^{\alpha, \beta}(\phi)\leq \liminf_{n\rightarrow\infty}\widetilde{J}_{\omega,0}^{\alpha, \beta}(\phi_n)\rightarrow m_{\omega,0}^{\alpha, \beta}.
\end{equation}
Therefore, combining \eqref{g29} with \eqref{g30}, we infer that
\[m_{\omega,0}^{\alpha, \beta}\leq S_{\omega,0}(\lambda_0\phi)<\lambda_0^2\widetilde{J}_{\omega,0}^{\alpha, \beta}(\phi)\leq m_{\omega,0}^{\alpha, \beta},\]
which is a contradiction. Hence, $K_{\omega,0}^{\alpha, \beta}(\phi)=0$. Moreover,
\[S_{\omega,0}(\phi)=\widetilde{J}_{\omega,0}^{\alpha, \beta}(\phi)\leq m_{\omega,0}^{\alpha, \beta},\]
which together with the definition of $m_{\omega,0}^{\alpha, \beta}$ yields that $S_{\omega,0}(\phi)=m_{\omega,0}^{\alpha, \beta}$. Hence, $\phi$ is the minimizer of \eqref{g6} with $V=0$.
\end{proof}

\subsection{Proof of Proposition \ref{prop1}}
Now, we are able to prove Proposition \ref{prop1}, which shows the parameter independence of $m_{\omega,0}^{\alpha, \beta}$ via its characterization by the ground states $Q_{\omega,0}$.

\textbf{Proof of Proposition \ref{prop1}.}
(i) We consider the minimization problem
\begin{equation}
\mathcal{M}_\omega=\{\phi\in W_b^{1,2}:\ S_{\omega,0}(\phi)=m_{\omega,0}^{\alpha,\beta},\ \ K_{\omega,0}^{\alpha,\beta}(\phi)=0\}.
\end{equation}
By Proposition \ref{pro2}, we show that $\mathcal{M}_\omega\neq\emptyset$. To show that the minimizer of \eqref{g6} with $V=0$ is also the ground state of the equation \eqref{c1}, it suffices to prove that $\mathcal{M}_\omega=\mathcal{G}_\omega$. 

On one hand, we prove $\mathcal{M}_\omega\subset\mathcal{G}_\omega$. Assume $\phi\in\mathcal{M}_{\omega}$, we firstly claim that $\phi\in\mathcal{A}_\omega$.

Indeed, since $\phi$ is the minimizer of \eqref{g6} with $V=0$, so there exists a Lagrange multiplier $\tau\in\Bbb{R}$ such that $S'_{\omega,0}(\phi)=\tau K'_{\omega,0}(\phi)$.
Recalling the definition $K_{\omega,0}$, we deduce
\begin{equation}\label{g31}
0=K_{\omega,0}(\phi)=\mathcal{L}_{\alpha, \beta}S_{\omega,0}(\phi)=\langle S'_{\omega,0}(\phi),\ \mathcal{L}_{\alpha, \beta}\phi \rangle=\tau \langle K'_{\omega,0}(\phi),\ \mathcal{L}_{\alpha, \beta}\phi \rangle,
\end{equation}
where we write $K_{\omega,0}:=K_{\omega,0}^{\alpha, \beta}$ and $\mathcal{L}_{\alpha, \beta}\phi:=\frac{\partial}{\partial\lambda}\phi_\lambda^{\alpha, \beta}|_{\lambda=0}$ for simplicity.

Since
\[\mathcal{L}_{\alpha, \beta}K_{\omega,0}(\phi)=\langle K'_{\omega,0}(\phi),\ \mathcal{L}_{\alpha, \beta}\phi \rangle,\]
which linked with \eqref{g31} yields that
\begin{equation}\label{g33}
0=K_{\omega,0}(\phi)=\tau\mathcal{L}_{\alpha, \beta}K_{\omega,0}(\phi)=\tau\mathcal{L}_{\alpha, \beta}^2S_{\omega,0}(\phi).
\end{equation}
We observe that
\begin{eqnarray}\nonumber
\lefteqn{\mathcal{L}_{\alpha, \beta}^2S_{\omega,0}(\phi)=\frac{[2\alpha+(2-b-n)\beta]^2}{2}\|\nabla\phi\|_{b,2}^2}\\\nonumber
&&\quad\quad\quad\quad+\frac{\omega(2\alpha-n\beta)^2}{2}\|\phi\|_2^2-\frac{[\alpha(p+2)-(n+c)\beta]^2}{p+2}\|\phi\|_{c,p+2}^{p+2}\\\nonumber
&&\quad\quad\quad\leq\mu\left(\frac{2\alpha+(2-b-n)\beta}{2}\|\nabla\phi\|_{b,2}^2+\frac{\omega(2\alpha-n\beta)}{2}\|\phi\|_2^2\right)\\\nonumber
&&\quad\quad\quad\quad-\frac{[\alpha(p+2)-(n+c)\beta]^2}{p+2}\|\phi\|_{c,p+2}^{p+2}\\\label{g32}
&&\quad\quad\quad=\frac{\alpha(p+2)-(n+c)\beta}{p+2}[\mu-\alpha(p+2)+(n+c)\beta]\|\phi\|_{c,p+2}^{p+2}<0,\quad\quad\quad
\end{eqnarray}
where the last inequality holds by the conditions that $c\geq b-2,\ \textbf{p}_c>2(2-b)$ and \eqref{g2}. Thus, it follows from \eqref{g33} and \eqref{g32} that
\[0=K_{\omega,0}(\phi)=\tau\mathcal{L}_{\alpha, \beta}^2S_{\omega,0}(\phi)<0.\]
This implies that $\tau=0$, and hence $S'_{\omega,0}(\phi)=0$. Therefore $\phi\in\mathcal{A}_\omega$ is proved.

Next, for any $v\in\mathcal{A}_\omega$, we deduce from \eqref{g31} that $K_{\omega,0}(v)=\langle S'_{\omega,0}(v),\ \mathcal{L}_{\alpha, \beta}v \rangle=0$ and $v\neq0$. By the definition of $\mathcal{M}_{\omega}$, we have
\[S_{\omega,0}(\phi)\leq S_{\omega,0}(v).\]
This implies that $\phi\in\mathcal{G}_{\omega}$. So $\mathcal{M}_\omega\subset\mathcal{G}_\omega$.

On the other hand, from Proposition \ref{pro2}, there exists $v\in\mathcal{M}_{\omega}$ such that $S_{\omega,0}(v)=m_{\omega,0}^{\alpha,\beta}$ and $K_{\omega,0}(v)=0,$ which immediately implies $v\in\mathcal{A}_{\omega}$. So if we assume $\phi\in\mathcal{G}_{\omega}$,
by the definition of $\mathcal{G}_{\omega}$, we get
\[S_{\omega,0}(\phi)\leq S_{\omega,0}(v)=m_{\omega,0}^{\alpha,\beta}.\]
Moreover, we deduce from $\phi\in\mathcal{A}_\omega$ and \eqref{g31} that $K_{\omega,0}(\phi)=0$ and $\phi\neq0$, which implies that $m_{\omega,0}^{\alpha,\beta}\leq S_{\omega,0}(\phi)$. Thus, $S_{\omega,0}(\phi)=m_{\omega,0}^{\alpha,\beta}$ and then $\mathcal{G}_{\omega}\subset\mathcal{M}_{\omega}$. Therefore, we obtain that $\mathcal{G}_{\omega}=\mathcal{M}_{\omega}$. Hence, $m_{\omega,0}^{\alpha,\beta}$ is attained by the ground state $Q_\omega$ of \eqref{c1}.

(ii) We introduce the functional
\begin{eqnarray}\label{g8}
\lefteqn{J_{\omega,V}^{\alpha, \beta}(\phi):=S_{\omega,V}(\phi)-\frac{1}{2\alpha+(2-b-n)\beta}K_{\omega,V}^{\alpha, \beta}(\phi)}\\\nonumber
&&\quad\quad=\frac{\beta}{4\alpha+2(2-b-n)\beta}\int[(2-b)V+x\cdot\nabla V+(2-b)\omega]|\phi|^2dx\\\nonumber
&&\quad\quad\quad+\frac{\alpha p-(2-b+c)\beta}{(p+2)[2\alpha+(2-b-n)\beta]}\|\phi\|_{c,p+2}^{p+2}.
\end{eqnarray}
In particular, $J_{\omega,0}^{\alpha, \beta}=\widetilde{J}_{\omega,0}^{\alpha, \beta}$ in case $V=0$ and $\beta\geq0$, which is defined in \eqref{g22}.

By considering the following minimizing problem in terms of $J_{\omega,V}^{\alpha, \beta}$:
\[n_{\omega,V}^{\alpha, \beta}:=\inf\{J_{\omega,V}^{\alpha, \beta}(\phi):\ \phi\in W_b^{1,2}\backslash\{0\},\ K_{\omega,V}^{\alpha, \beta}(\phi)\leq0\},\]
we claim that
\begin{equation}\label{g36}
n_{\omega,V}^{\alpha, \beta}=m_{\omega,V}^{\alpha,\beta}.
\end{equation}

Indeed, Let $\phi\in W_b^{1,2}\backslash\{0\}$ satisfy $K_{\omega,V}^{\alpha, \beta}(\phi)\leq0$. If $K_{\omega,V}^{\alpha, \beta}(\phi)=0$, by the definition of $m_{\omega,V}^{\alpha,\beta}$ and \eqref{g8}, we obtain
\[m_{\omega,V}^{\alpha,\beta}\leq S_{\omega,V}(\phi)=J_{\omega,V}^{\alpha, \beta}(\phi).\]
If else $K_{\omega,V}^{\alpha, \beta}(\phi)<0$. From Lemma \ref{lemma1} and the continuity of $K_{\omega,V}^{\alpha,\beta}(\lambda\phi)$ in $\lambda$, there exists $\lambda_0\in(0, 1)$ such that $K_{\omega,V}^{\alpha,\beta}(\lambda_0\phi)=0$. Thus,
\[m_{\omega,V}^{\alpha,\beta}\leq S_{\omega,V}(\lambda_0\phi)=J_{\omega,V}^{\alpha, \beta}(\lambda_0\phi)<J_{\omega,V}^{\alpha, \beta}(\phi),\]
where the last inequality is obtained by the fact that $J_{\omega,V}^{\alpha, \beta}(\lambda\phi)$ is monotone increasing in $\lambda\in(0, 1)$.
Hence, we conclude that $n_{\omega,V}^{\alpha, \beta}\geq m_{\omega,V}^{\alpha,\beta}$.

To prove  $n_{\omega,V}^{\alpha, \beta}\leq m_{\omega,V}^{\alpha,\beta}$, we assume $\phi\in W_b^{1,2}\backslash\{0\}$ satisfies $K_{\omega,V}^{\alpha, \beta}(\phi)=0$. By the definition of $n_{\omega,V}^{\alpha, \beta}$ and \eqref{g8}, we get $n_{\omega,V}^{\alpha, \beta}\leq J_{\omega,V}^{\alpha, \beta}(\phi)=S_{\omega,V}$. Thus we have $n_{\omega,V}^{\alpha, \beta}\leq m_{\omega,V}^{\alpha,\beta}$. Therefore, we conclude \eqref{g36}.

Now, let $\phi\in W_b^{1,2}\backslash\{0\}$ satisfies $K_{\omega,V}^{\alpha, \beta}(\phi)=0$. Then we obtain $S_{\omega,V}(\phi)=J_{\omega,V}^{\alpha, \beta}(\phi)$. Since $V\geq0$ and $x\cdot\nabla V\leq0$, we get $K_{\omega,0}^{\alpha, \beta}(\phi)\leq K_{\omega,V}^{\alpha, \beta}(\phi)=0$. Thus it follows from \eqref{g36} that
\[m_{\omega,0}^{\alpha,\beta}\leq J_{\omega,0}^{\alpha,\beta}(\phi)\leq J_{\omega,V}^{\alpha,\beta}(\phi)=S_{\omega,V}(\phi),\]
which arrives at $m_{\omega,0}^{\alpha,\beta}\leq m_{\omega,V}^{\alpha,\beta}$. Hence, we finish the proof of Proposition \ref{prop1}. \quad\quad\quad $\square$

\subsection{Invariant Flow}
In this part, we will prove that the sets $\mathcal{N}^{\pm}$ defined in \eqref{g23} and \eqref{g24} are invariant under the flow of \eqref{a0}.

We denote
\[L_{\omega,V}(\phi):=\|\phi\|_{\dot{H}_{b,V}^1}+\omega\|\phi\|_2^2\]
and the frequency
\[\omega_1:=-\frac{1}{2}\inf_{x\in\Bbb{R}^n}[(2-b)V+x\cdot\nabla V].\]
The following lemma makes a comparison between $S_{\omega,V}(\phi)$ and $L_{\omega,V}(\phi)$.
\begin{lemma}\label{lemma3}
Let $b<2,\ c\geq b-2,\ 2(2-b)<\emph{\textbf{p}}_c\leq(2-b)(p+2),\ x\cdot\nabla V+(2-b)V\geq0$ and let $(\alpha,\beta)$ satisfy \eqref{g2}. Assume $\omega\geq\omega_1$ and $\phi\in W_b^{1,2}$ satisfies $K_{\omega,V}^{\alpha,\beta}(\phi)\geq0$. Then
\begin{equation}\nonumber
2S_{\omega,V}(\phi)\leq L_{\omega,V}(\phi)\leq\frac{2\alpha(p+2)-2(n+c)\beta}{\alpha p-(2-b+c)\beta}S_{\omega,V}(\phi).
\end{equation}
\end{lemma}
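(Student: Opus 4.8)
The plan is to reduce both estimates to comparing, term by term, the coefficients of the five basic quantities $\|\nabla\phi\|_{b,2}^2$, $\int V(x)|\phi|^2\,dx$, $\omega\|\phi\|_2^2$, $\int (x\cdot\nabla V)|\phi|^2\,dx$ and $\|\phi\|_{c,p+2}^{p+2}$ that occur in $S_{\omega,V}$, $L_{\omega,V}$ and $K_{\omega,V}^{\alpha,\beta}$. The lower bound is immediate and needs no hypothesis: directly from the definitions,
\[2S_{\omega,V}(\phi)=\|\nabla\phi\|_{b,2}^2+\int V(x)|\phi|^2\,dx+\omega\|\phi\|_2^2-\frac{2}{p+2}\|\phi\|_{c,p+2}^{p+2}=L_{\omega,V}(\phi)-\frac{2}{p+2}\|\phi\|_{c,p+2}^{p+2}\le L_{\omega,V}(\phi).\]

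For the upper bound I would introduce the two parameters $\theta:=\alpha(p+2)-(n+c)\beta$ and $\gamma:=\alpha p-(2-b+c)\beta$, so that $K_{\omega,V}^{\alpha,\beta}$ carries $-\tfrac{\theta}{p+2}\|\phi\|_{c,p+2}^{p+2}$ and $\tfrac12\big(2\alpha+(2-b-n)\beta\big)\|\nabla\phi\|_{b,2}^2=\tfrac12(\theta-\gamma)\|\nabla\phi\|_{b,2}^2$. Under \eqref{g2} one has $\theta-\gamma=2\alpha+(2-b-n)\beta=(2\alpha-n\beta)+(2-b)\beta>0$ (the two summands cannot both vanish), and using $2\alpha\ge n\beta$ together with $\textbf{p}_c>2(2-b)$ one gets $\gamma\ge\tfrac{\beta}{2}\big(\textbf{p}_c-2(2-b)\big)>0$ when $\beta>0$ and $\gamma=\alpha p>0$ when $\beta=0$; hence $\theta>0$ and $\gamma>0$, so the quotient $\tfrac{2\theta}{\gamma}=\tfrac{2\alpha(p+2)-2(n+c)\beta}{\alpha p-(2-b+c)\beta}$ in the statement is meaningful. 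Expanding $2\theta\,S_{\omega,V}(\phi)-\gamma\,L_{\omega,V}(\phi)-2K_{\omega,V}^{\alpha,\beta}(\phi)$ in the five quantities above, the coefficients of $\|\nabla\phi\|_{b,2}^2$ and of $\|\phi\|_{c,p+2}^{p+2}$ vanish, while those of $\int V|\phi|^2\,dx$, $\omega\|\phi\|_2^2$ and $\int(x\cdot\nabla V)|\phi|^2\,dx$ assemble into a single potential term, giving the identity
\[2\theta\,S_{\omega,V}(\phi)-\gamma\,L_{\omega,V}(\phi)-2K_{\omega,V}^{\alpha,\beta}(\phi)=\beta\int_{\Bbb{R}^n}\big[(2-b)V(x)+x\cdot\nabla V+(2-b)\omega\big]|\phi|^2\,dx.\]
The right-hand side is nonnegative: $\beta\ge0$ by \eqref{g2}, and by the hypothesis $x\cdot\nabla V+(2-b)V\ge0$ together with $\omega\ge\omega_1$ the integral is $\ge0$; and $K_{\omega,V}^{\alpha,\beta}(\phi)\ge0$ by assumption. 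Therefore $2\theta\,S_{\omega,V}(\phi)\ge\gamma\,L_{\omega,V}(\phi)$, and dividing by $\gamma>0$ yields $L_{\omega,V}(\phi)\le\tfrac{2\theta}{\gamma}S_{\omega,V}(\phi)$, which is exactly the asserted upper bound.

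I expect the only points requiring care to be bookkeeping ones: verifying the displayed coefficient identity (a short computation) and checking the strict positivity of $\gamma$ — which is precisely where the constraints $2\alpha-n\beta\ge0$ in \eqref{g2} and $\textbf{p}_c>2(2-b)$ enter, and which is needed both for the quotient in the statement to be well defined and for the final division to be legitimate. The nonnegativity of the potential integral is guaranteed by assumption (I) and $\omega\ge\omega_1$, so no additional input is needed there.
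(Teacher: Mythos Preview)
Your proof is correct and follows essentially the same route as the paper's: the paper rewrites $K_{\omega,V}^{\alpha,\beta}(\phi)+\tfrac{\gamma}{2}L_{\omega,V}(\phi)=\theta\,S_{\omega,V}(\phi)-\tfrac{\beta}{2}\int[(2-b)V+x\cdot\nabla V+(2-b)\omega]|\phi|^2\,dx$ and drops the nonnegative integral, which is exactly your identity $2\theta S_{\omega,V}-\gamma L_{\omega,V}-2K_{\omega,V}^{\alpha,\beta}=\beta\int[\cdots]$ rearranged. Your explicit verification that $\gamma=\alpha p-(2-b+c)\beta>0$ (via $2\alpha\ge n\beta$ and $\textbf{p}_c>2(2-b)$) is a useful addition, since the paper divides by this quantity without comment.
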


\begin{proof}
Since
\[S_{\omega,V}(\phi)=\frac{1}{2}L_{\omega,V}(\phi)-\frac{1}{p+2}\|\phi\|_{c,p+2}^{p+2}\leq\frac{1}{2}L_{\omega,V}(\phi),\]
it is clear that the first inequality holds. To prove the second inequality, we rewrite \eqref{g3} as
\begin{eqnarray}\nonumber
\lefteqn{K_{\omega,V}^{\alpha,\beta}(\phi)=\frac{2\alpha+(2-b-n)\beta}{2}L_{\omega,V}(\phi)-\frac{\alpha(p+2)-(n+c)\beta}{p+2}\|\phi\|^{p+2}_{c,p+2}}\\\nonumber
&&\quad\quad\quad -\frac{\beta}{2}\int [(2-b)V+x\cdot\nabla V+(2-b)\omega]|\phi|^2dx,\quad\quad\quad\quad\quad\quad
\end{eqnarray}
then we further have
\begin{eqnarray}\nonumber
\lefteqn{K_{\omega,V}^{\alpha,\beta}(\phi)+\frac{\alpha p-(2-b+c)\beta}{2}L_{\omega,V}(\phi)=[\alpha(p+2)-(n+c)\beta]S_{\omega,V}(\phi)}\\\nonumber
&&\quad\quad\quad\quad\quad\quad\quad\quad\quad\quad-\frac{\beta}{2}\int [(2-b)V+x\cdot\nabla V+(2-b)\omega]|\phi|^2dx\\\nonumber
&&\quad\quad\quad\quad\quad\quad\quad\quad\quad\leq [\alpha(p+2)-(n+c)\beta]S_{\omega,V}(\phi).\quad\quad\quad\quad\quad\quad\quad\quad
\end{eqnarray}
Thus, together with $K_{\omega,V}^{\alpha,\beta}(\phi)\geq0$, we infer that
\[\frac{\alpha p-(2-b+c)\beta}{2}L_{\omega,V}(\phi)\leq [\alpha(p+2)-(n+c)\beta]S_{\omega,V}(\phi).\]
We prove this lemma.

\end{proof}

\begin{lemma}\label{lemma4}
Let $2-n<b<2,\ b-2<c\leq0,\ 2(2-b)<\emph{\textbf{p}}_c\leq(2-b)(p+2),\ V\geq0, -(2-b)V\leq x\cdot\nabla V\leq0,\ x\nabla^2Vx^T\leq-(n+3-b-\frac{2\alpha}{\beta})x\cdot\nabla V$ and let $(\alpha, \beta)$ satisfy \eqref{g2}.
Assume $\phi\in \mathcal{N}_{\alpha,\beta}:=\{\phi\in W_b^{1,2}:\ S_{\omega,V}(\phi)<m_{\omega,0},\ K_{\omega,V}^{\alpha,\beta}(\phi)<0\}$. Then
\begin{equation}\label{g14}
K_{\omega,V}^{\alpha, \beta}(\phi)\leq-[2\alpha+(2-b-n)\beta](m_{\omega,0}-S_{\omega,V}(\phi)),
\end{equation}
where $\nabla^2V$ is the Hessian matrix of $V$.
\end{lemma}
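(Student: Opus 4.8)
The plan is to exploit the scaling family $\phi_\lambda^{\alpha,\beta}$ and compare the action of $\phi$ along this flow with the minimizing level $m_{\omega,0}$. First I would consider the function $\lambda\mapsto S_{\omega,V}(\phi_\lambda^{\alpha,\beta})$ and its derivative $K_{\omega,V}^{\alpha,\beta}(\phi_\lambda^{\alpha,\beta})$. Since $\phi\in\mathcal{N}_{\alpha,\beta}$ we have $K_{\omega,V}^{\alpha,\beta}(\phi)<0$, while by \eqref{g9} we know $K_{\omega,V}^{\alpha,\beta}(\phi_\lambda^{\alpha,\beta})>0$ for $\lambda<0$ sufficiently negative. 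Hence there is a smallest $\lambda_0<0$ with $K_{\omega,V}^{\alpha,\beta}(\phi_{\lambda_0}^{\alpha,\beta})=0$, so that $\phi_{\lambda_0}^{\alpha,\beta}$ is admissible for the constrained problem \eqref{g6} and thus $S_{\omega,V}(\phi_{\lambda_0}^{\alpha,\beta})\geq m_{\omega,V}^{\alpha,\beta}\geq m_{\omega,0}$, using Proposition \ref{prop1}(ii).

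Next I would write, by the fundamental theorem of calculus,
\[
S_{\omega,V}(\phi)-S_{\omega,V}(\phi_{\lambda_0}^{\alpha,\beta})=\int_{\lambda_0}^{0}\frac{\partial}{\partial\lambda}S_{\omega,V}(\phi_\lambda^{\alpha,\beta})\,d\lambda=\int_{\lambda_0}^{0}K_{\omega,V}^{\alpha,\beta}(\phi_\lambda^{\alpha,\beta})\,d\lambda,
\]
and the task reduces to controlling the integrand from above by $-[2\alpha+(2-b-n)\beta]$ times a quantity that integrates to $\lambda_0\cdot(\text{something})\leq$ what we need; more precisely the key structural identity is a differential inequality of the form
\[
\frac{d}{d\lambda}K_{\omega,V}^{\alpha,\beta}(\phi_\lambda^{\alpha,\beta})\leq [2\alpha+(2-b-n)\beta]\,K_{\omega,V}^{\alpha,\beta}(\phi_\lambda^{\alpha,\beta})
\]
on the relevant $\lambda$-interval. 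This is exactly where the hypotheses $V\geq0$, $-(2-b)V\leq x\cdot\nabla V\leq0$, and the Hessian condition $x\nabla^2Vx^T\leq-(n+3-b-\tfrac{2\alpha}{\beta})x\cdot\nabla V$ enter: computing $\partial_\lambda K_{\omega,V}^{\alpha,\beta}(\phi_\lambda^{\alpha,\beta})$ produces the second $\lambda$-derivative $\mathcal{L}_{\alpha,\beta}^2 S_{\omega,V}$, whose potential contributions involve $\int[(2-b)V+x\cdot\nabla V]|\phi|^2$, $\int(x\cdot\nabla V)|\phi|^2$ and $\int x\nabla^2Vx^T|\phi|^2$, and one checks that the stated sign conditions make the "excess" over $[2\alpha+(2-b-n)\beta]K_{\omega,V}^{\alpha,\beta}$ nonpositive, the nonlinear term contributing with the favorable sign because $\alpha(p+2)-(n+c)\beta>[2\alpha+(2-b-n)\beta]$ (which follows from $c\geq b-2$, $\textbf{p}_c>2(2-b)$ and \eqref{g2}). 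Integrating this Gronwall-type inequality from $\lambda$ to $0$ and using $K_{\omega,V}^{\alpha,\beta}(\phi_0^{\alpha,\beta})=K_{\omega,V}^{\alpha,\beta}(\phi)<0$ gives $K_{\omega,V}^{\alpha,\beta}(\phi_\lambda^{\alpha,\beta})\leq e^{[2\alpha+(2-b-n)\beta]\lambda}K_{\omega,V}^{\alpha,\beta}(\phi)$ for $\lambda\in[\lambda_0,0]$, which in particular keeps the integrand negative, so combining with the displayed FTC identity and $S_{\omega,V}(\phi_{\lambda_0}^{\alpha,\beta})\geq m_{\omega,0}$ yields
\[
m_{\omega,0}-S_{\omega,V}(\phi)\leq -\int_{\lambda_0}^0 K_{\omega,V}^{\alpha,\beta}(\phi_\lambda^{\alpha,\beta})\,d\lambda\leq -K_{\omega,V}^{\alpha,\beta}(\phi)\int_{\lambda_0}^{0}e^{[2\alpha+(2-b-n)\beta]\lambda}d\lambda\leq \frac{-K_{\omega,V}^{\alpha,\beta}(\phi)}{2\alpha+(2-b-n)\beta},
\]
which is \eqref{g14} after rearranging.

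The main obstacle I anticipate is verifying the differential inequality for the potential terms: one must carefully expand $\mathcal{L}_{\alpha,\beta}^2 S_{\omega,V}(\phi_\lambda^{\alpha,\beta})$ — keeping track of how $V(e^{\beta\lambda}x)$, $(x\cdot\nabla V)(e^{\beta\lambda}x)$, and the Hessian term scale under the dilation — and then check that the precise combination $\mathcal{L}_{\alpha,\beta}^2 S_{\omega,V}-[2\alpha+(2-b-n)\beta]\,K_{\omega,V}^{\alpha,\beta}$ has a sign. A secondary technical point is justifying that $\lambda_0$ is finite and that the map $\lambda\mapsto K_{\omega,V}^{\alpha,\beta}(\phi_\lambda^{\alpha,\beta})$ is $C^1$ with the claimed derivative, for which one uses the explicit scaling of each term in \eqref{g3} together with $\phi\in W_b^{1,2}$ and assumption (II) ensuring integrability of $x\cdot\nabla V$ against $|\phi|^2$. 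Once those are in place the Gronwall step and the concluding estimate are routine.
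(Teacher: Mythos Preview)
Your approach is essentially the same as the paper's: define $g(\lambda)=S_{\omega,V}(\phi_\lambda^{\alpha,\beta})$, locate $\lambda_0<0$ with $g'(\lambda_0)=0$ via \eqref{g9}, use Proposition~\ref{prop1}(ii) to get $g(\lambda_0)\ge m_{\omega,0}$, and establish the key differential inequality $g''(\lambda)\le[2\alpha+(2-b-n)\beta]\,g'(\lambda)$. The only difference is in how you exploit this inequality: the paper simply integrates $g''-[2\alpha+(2-b-n)\beta]g'\le0$ once over $[\lambda_0,0]$ to obtain $g'(0)-g'(\lambda_0)\le[2\alpha+(2-b-n)\beta](g(0)-g(\lambda_0))$, which gives \eqref{g14} immediately; your Gronwall detour is correct in spirit but contains a sign slip --- from $g''\le cg'$ one gets $K_{\omega,V}^{\alpha,\beta}(\phi_\lambda^{\alpha,\beta})\ge e^{c\lambda}K_{\omega,V}^{\alpha,\beta}(\phi)$ for $\lambda\le0$ (not $\le$), and it is this \emph{lower} bound that feeds correctly into your final chain of inequalities.
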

\begin{proof}
We write $g(\lambda):=S_{\omega,V}(\phi_\lambda^{\alpha, \beta})$, where $\phi_\lambda^{\alpha, \beta}(x)=e^{\alpha\lambda}\phi(e^{\beta\lambda}x)$. Then
\begin{eqnarray}\nonumber
\lefteqn{g'(\lambda)=\frac{2\alpha+(2-b-n)\beta}{2}e^{[2\alpha+(2-b-n)\beta]\lambda}\|\nabla\phi\|^2_{b,2}}\\\nonumber
&&\quad+\frac{2\alpha-n\beta}{2}e^{(2\alpha-n\beta)\lambda}(\omega\|\phi\|_2^2+\int V(e^{-\beta\lambda}x)|\phi|^2dx)\\\nonumber
&&\quad-\frac{\beta e^{[2\alpha-(n+1)\beta]\lambda}}{2}\int x\cdot\nabla V(e^{-\beta\lambda}x)|\phi|^2dx\\\label{g37}
&&\quad-\frac{(p+2)\alpha-(n+c)\beta}{p+2}e^{[(p+2)\alpha-(n+c)\beta]\lambda}\|\phi\|^{p+2}_{c,p+2}.\quad\quad\quad
\end{eqnarray}

Since $\phi\in\mathcal{N}_{\alpha,\beta}$, we deduce that
\begin{equation}\nonumber
g'(0)=K_{\omega,V}^{\alpha,\beta}(\phi)<0.
\end{equation}
By \eqref{g9}, we get
\begin{equation}\nonumber
K_{\omega,V}^{\alpha,\beta}(\phi_\lambda^{\alpha,\beta})=g'(\lambda)>0
\end{equation}
for sufficiently small $\lambda<0$. Thus by the continuity of $g'(\lambda)$ in $\lambda$, there exists $\lambda_0<0$ such that
\begin{equation}\label{g25}
g'(\lambda_0)=0 \quad \mbox{and}\quad g'(\lambda)<0\quad \mbox{for}\ \lambda\in(\lambda_0, 0].
\end{equation}
Thus, by the definition of $m_{\omega,V}^{\alpha,\beta}$ and Proposition \ref{prop1} (ii), we have
\begin{equation}\label{g13}
m_{\omega,0}\leq m_{\omega,V}^{\alpha,\beta}\leq S_{\omega,V}(\phi_{\lambda_0}^{\alpha,\beta})=g(\lambda_0).
\end{equation}

Now we differentiate $g'(\lambda)$ again to obtain
\begin{eqnarray}\nonumber
\lefteqn{g''(\lambda)=\frac{[2\alpha+(2-b-n)\beta]^2}{2}e^{[2\alpha+(2-b-n)\beta]\lambda}\|\nabla\phi\|^2_{b,2}}\\\nonumber
&&\quad\quad+\frac{(2\alpha-n\beta)^2}{2}e^{(2\alpha-n\beta)\lambda}(\omega\|\phi\|_2^2+\int V(e^{-\beta\lambda}x)|\phi|^2dx)\\\nonumber
&&\quad\quad+\frac{(2\alpha-n\beta)^2}{2}e^{(2\alpha-n\beta)\lambda}\int x\cdot\nabla V(e^{-\beta\lambda}x)|\phi|^2dx\\\nonumber
&&\quad\quad+\frac{\beta^2}{2}e^{[2\alpha-(n+2)\beta]\lambda}\int x\nabla^2 V(e^{-\beta\lambda}x)x^T|\phi|^2dx\\\nonumber
&&\quad\quad-\frac{[(p+2)\alpha-(n+c)\beta]^2}{p+2}e^{[(p+2)\alpha-(n+c)\beta]\lambda}\|\phi\|^{p+2}_{c,p+2}\\\nonumber
&&\quad\leq\frac{[2\alpha+(2-b-n)\beta]^2}{2}e^{[2\alpha+(2-b-n)\beta]\lambda}\|\nabla\phi\|^2_{b,2}\\\nonumber
&&\quad\quad+\frac{(2\alpha-n\beta)^2}{2}e^{(2\alpha-n\beta)\lambda}(\omega\|\phi\|_2^2+\int V(e^{-\beta\lambda}x)|\phi|^2dx)\\\nonumber
&&\quad\quad-\frac{\beta[2\alpha+(2-b-n)\beta]}{2}e^{[2\alpha-(n+1)\beta]\lambda}\int x\cdot\nabla V(e^{-\beta\lambda}x)|\phi|^2dx\\\nonumber
&&\quad\quad-\frac{[(p+2)\alpha-(n+c)\beta]^2}{p+2}e^{[(p+2)\alpha-(n+c)\beta]\lambda}\|\phi\|^{p+2}_{c,p+2},
\end{eqnarray}
which together with \eqref{g37} yields
\begin{eqnarray}\nonumber
\lefteqn{g''(\lambda)-[2\alpha+(2-b-n)\beta]g'(\lambda)}\\\nonumber
&&\leq-\frac{(2-b)(2\alpha-n\beta)\beta}{2}e^{(2\alpha-n\beta)\lambda}(\omega\|\phi\|_2^2+\int V(e^{-\beta\lambda}x)|\phi|^2dx)\\\nonumber
&&\ -\frac{[(p+2)\alpha-(n+c)\beta][\alpha p-(2-b+c)\beta]}{p+2}e^{[(p+2)\alpha-(n+c)\beta]\lambda}\|\phi\|^{p+2}_{c,p+2}\leq0.\quad\quad\quad
\end{eqnarray}
Integrating the above inequality over $[\lambda_0, 0]$, we get
\[g'(0)-g'(\lambda_0)\leq [2\alpha+(2-b-n)\beta](g(0)-g(\lambda_0)).\]
Thus, by \eqref{g25} and \eqref{g13}, we have $K_{\omega,V}^{\alpha, \beta}(\phi)\leq-[2\alpha+(2-b-n)\beta](m_{\omega,0}-S_{\omega,V}(\phi))$. Hence, Lemma \ref{lemma4} is proved.
\end{proof}
\begin{pro}\label{lemm5}
Let $2-n<b<2,\ b-2<c\leq0,\ 2(2-b)<\emph{\textbf{p}}_c<(2-b)(p+2),\ V\geq0, -(2-b)V\leq x\cdot\nabla V\leq0,\ x\nabla^2Vx^T\leq-(3-b)x\cdot\nabla V$. Then the sets $\mathcal{N}^{\pm}$ are invariant under the flow of \eqref{a0}. 
\end{pro}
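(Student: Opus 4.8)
The plan is to combine conservation of the action $S_{\omega,V}$ along the flow with a continuity (barrier) argument at the level set $\{K_{\omega,V}^{n,2}=0\}$, using Proposition \ref{prop1}(ii) to forbid a sign change. First I would note that since the INLS$_{b,V}$ conserves both $E_{b,V}$ and $M$, the action
$S_{\omega,V}(u(t))=E_{b,V}(u(t))+\frac{\omega}{2}M(u(t))=S_{\omega,V}(u_0)$ is constant in $t$; hence the condition $S_{\omega,V}(u(t))<m_{\omega,0}$ is automatically preserved, and the whole problem reduces to showing that the sign of $K_{\omega,V}^{n,2}(u(t))=(2-b)P(u(t))$ cannot change on $[0,T^\ast)$. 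The case $u_0=0$ is trivial (then $u\equiv0\in\mathcal{N}^+$), so I assume $u_0\neq0$; by mass conservation $u(t)\neq0$ for every $t\in[0,T^\ast)$.

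Next I would record the continuity of $t\mapsto K_{\omega,V}^{n,2}(u(t))$ on $[0,T^\ast)$. The terms $\|\nabla u(t)\|_{b,2}^2$ and $\|u(t)\|_2^2$ are continuous because $u\in C([0,T^\ast);W_b^{1,2})$; the term $\|u(t)\|_{c,p+2}^{p+2}$ is continuous by the continuous embedding $W_b^{1,2}\hookrightarrow L^{p+2}(|x|^c dx)$ (Lemma \ref{lemma6}); and the potential term $\int(x\cdot\nabla V)|u(t)|^2dx$ is finite and continuous because assumption (II), H\"older's inequality and the Hardy--Sobolev inequality (Lemma \ref{lem11}) give $|\int(x\cdot\nabla V)|u(t)|^2dx|\leq C\|x\cdot\nabla V\|_{L^{n/2}(|x|^{-nb/2}dx)}\|\nabla u(t)\|_{b,2}^2$. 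This is exactly where hypothesis (II) is used.

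Then comes the barrier argument. Suppose $u_0\in\mathcal{N}^+$. If $K_{\omega,V}^{n,2}(u_0)=0$, then since $u_0\neq0$ and $(\alpha,\beta)=(n,2)$ satisfies \eqref{g2}, Proposition \ref{prop1}(ii) (together with Proposition \ref{pro2}) yields $S_{\omega,V}(u_0)\geq m_{\omega,V}^{n,2}\geq m_{\omega,0}$, contradicting $u_0\in\mathcal{N}^+$; hence $K_{\omega,V}^{n,2}(u_0)>0$. If the conclusion failed, there would be $t_1\in(0,T^\ast)$ with $K_{\omega,V}^{n,2}(u(t_1))<0$ (the action condition already being preserved); putting $t_0=\inf\{t\in[0,t_1]:K_{\omega,V}^{n,2}(u(t))<0\}$, continuity forces $t_0>0$ and $K_{\omega,V}^{n,2}(u(t_0))=0$ with $u(t_0)\neq0$, so again $S_{\omega,V}(u(t_0))\geq m_{\omega,0}$ — but $S_{\omega,V}(u(t_0))=S_{\omega,V}(u_0)<m_{\omega,0}$, a contradiction. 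Thus $u(t)\in\mathcal{N}^+$ for all $t$. The case $u_0\in\mathcal{N}^-$ is symmetric: if $K_{\omega,V}^{n,2}(u(t_1))\geq0$ for some $t_1\in(0,T^\ast)$, then at the first such time $t_0>0$ one has $K_{\omega,V}^{n,2}(u(t_0))=0$ and $u(t_0)\neq0$, again forcing $S_{\omega,V}(u(t_0))\geq m_{\omega,0}$, a contradiction. Moreover, since $(n,2)$ satisfies \eqref{g2} and the hypothesis $x\nabla^2Vx^T\leq-(3-b)x\cdot\nabla V$ is precisely the condition of Lemma \ref{lemma4} for $(\alpha,\beta)=(n,2)$ (with $2\alpha+(2-b-n)\beta=2(2-b)$), Lemma \ref{lemma4} upgrades the invariance of $\mathcal{N}^-=\mathcal{N}_{n,2}$ to the quantitative bound $K_{\omega,V}^{n,2}(u(t))\leq-2(2-b)(m_{\omega,0}-S_{\omega,V}(u_0))<0$ for all $t\in[0,T^\ast)$, which is what is needed in Theorem \ref{thmm1}(ii).

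I expect the only genuinely delicate point to be the continuity of $t\mapsto\int(x\cdot\nabla V)|u(t)|^2dx$ — that is, checking that assumption (II) makes the virial functional $K_{\omega,V}^{n,2}$ a continuous function along the trajectory, which is exactly what makes the no-crossing argument rigorous; everything else is conservation of $S_{\omega,V}$ together with Propositions \ref{prop1}--\ref{pro2} and Lemma \ref{lemma4}.
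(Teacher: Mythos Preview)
Your proof is correct and follows essentially the same approach as the paper: conservation of $S_{\omega,V}$ plus a barrier argument at the zero set of $K_{\omega,V}^{n,2}$, using Proposition~\ref{prop1}(ii) to rule out $K_{\omega,V}^{n,2}(u(t_0))=0$ with $u(t_0)\neq0$. The only cosmetic difference is that for $\mathcal{N}^-$ the paper invokes Lemma~\ref{lemma4} directly to get the strictly negative bound (which, combined with continuity, already forbids a zero crossing), whereas you first run the pure continuity argument and then quote Lemma~\ref{lemma4} as an upgrade; both are fine.
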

\begin{proof}
Since $u_0\in \mathcal{N}^+$, then $S_{\omega,V}(u_0)<m_{\omega,0}$ and $K_{\omega,V}^{n,2}(u_0)\geq0$. We discuss it into two cases:

When $K_{\omega,V}^{n,2}(u_0)=0$, if follows from the definition of $m_{\omega,V}^{n,2}$ and Proposition \ref{prop1} (ii) that
\[m_{\omega,V}^{n,2}\leq S_{\omega,V}(u_0)<m_{\omega,0}\leq m_{\omega,V}^{n,2}.\]
Thus we have $u_0\equiv0$, and then $u(x,t)\equiv0$. Therefore, $u(t)\in \mathcal{N}^+$. 

When $K_{\omega,V}^{n,2}(u_0)>0$, we prove $u(t)\in \mathcal{N}^+$ by contradiction as follows. If the conclusion does not hold, then there exists $t_0\in[0, T^\ast)$ such that $K_{\omega,V}^{n,2}(u(t_0))=0$. This implies that
\[m_{\omega,V}^{n,2}\leq S_{\omega,V}(u(t_0)).\]
By Proposition \ref{prop1} and the conservation laws, we infer that
\[S_{\omega,V}(u(t_0))=S_{\omega,V}(u_0)< m_{\omega,0}\leq m_{\omega,V}^{n,2},\]
which arrives at a contradiction. Hence $u(t)\in \mathcal{N}^+$.

For $u_0\in \mathcal{N}^-$, Lemma \ref{lemma4} and the conservation laws imply that $K_{\omega,V}^{n,2}(u)\leq-2(2-b)(m_{\omega,0}-S_{\omega,V}(u_0))<0$, which concludes $u(t)\in \mathcal{N}^-$.

\end{proof}

\section{Global existence and Blow-up for non-radial case}
In this section, we prove Theorem \ref{thmm1} and Corollary \ref{thm4}, of which the proof of blow-up relies on the method of Du-Wu-Zhang \cite{DWZ}. 
To start it, we establish the following $L^2$ estimate in the exterior ball.
\begin{lemma}\label{lem8}
For $2-n<b\leq0$. Let $u\in C([0, \infty); W^{1,2}_b)$ be the solution to the INLS$_{b,V}$, which satisfies
\begin{equation}\label{d4}
\sup_{t\in[0, \infty)}\|\nabla u(t)\|_{b,2}<\infty.
\end{equation}
Then there exists a constant $C>0$ such that for any $\eta>0,\ R>0$ and $t\in[0,\ \frac{\eta R^{1-b/2}}{C_1C\|u_0\|_2}]$, the following estimate
\begin{equation}\label{d5}
\int_{|x|>R}|u(x,t)|^2dx\leq o_R(1)+\eta
\end{equation}
holds, where $o_R(1)$ denotes a function of $R$ satisfying $o_R(1)\rightarrow0$ as $R\rightarrow\infty$.
\end{lemma}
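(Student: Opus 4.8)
The plan is to localize the $L^{2}$ mass in the region $\{|x|>R\}$ by a smooth cutoff, show that the resulting quantity has initial value $o_{R}(1)$ and grows in time at rate $O(R^{-(1-b/2)})$, and then read off the claim by restricting $t$ to the interval where this growth does not exceed $\eta$. Only the uniform bound \eqref{d4} and mass conservation are needed.

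Concretely, I fix $\chi\in C^{\infty}(\Bbb{R}^{n})$ with $0\le\chi\le1$, $\chi(x)=0$ for $|x|\le\tfrac12$ and $\chi(x)=1$ for $|x|\ge1$, and set $\chi_{R}(x):=\chi(x/R)$, so that $\nabla\chi_{R}$ is supported in $\{R/2\le|x|\le R\}$ with $|\nabla\chi_{R}|\le CR^{-1}$. Define
\[
g(t):=\int\chi_{R}(x)\,|u(x,t)|^{2}\,dx .
\]
Since $\chi_{R}\equiv1$ on $\{|x|>R\}$ we have $\int_{|x|>R}|u(t)|^{2}\,dx\le g(t)$, and since $\chi_{R}$ vanishes on $\{|x|\le R/2\}$ we have $g(0)\le\int_{|x|\ge R/2}|u_{0}|^{2}\,dx=:o_{R}(1)\to0$ as $R\to\infty$, because $u_{0}\in W_{b}^{1,2}\subset L^{2}$.

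Next I differentiate $g$ along the flow. Multiplying \eqref{a0} by $\chi_{R}\overline{u}$, integrating over $\Bbb{R}^{n}$, taking imaginary parts and integrating by parts, the contributions of the potential term $V|u|^{2}$ and of the nonlinear term $|x|^{c}|u|^{p+2}$ vanish (both are real), and so does $\chi_{R}|x|^{b}|\nabla u|^{2}$; this leaves
\[
g'(t)=2\,\mathrm{Im}\int|x|^{b}\,(\nabla\chi_{R}\cdot\nabla u)\,\overline{u}\,dx ,
\]
an identity justified exactly as in the proof of Corollary \ref{cor2} (an $\varepsilon$-regularization argument), the singularity of $|x|^{b}$ at the origin being harmless since $\nabla\chi_{R}$ is supported away from $0$. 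On the support of $\nabla\chi_{R}$ one has $|x|^{b/2}\le C_{b}R^{b/2}$ because $b\le0$; writing $|x|^{b}=|x|^{b/2}\cdot|x|^{b/2}$, estimating one factor by $C_{b}R^{b/2}$, using $|\nabla\chi_{R}|\le CR^{-1}$ and the weighted Cauchy--Schwarz inequality gives
\[
|g'(t)|\le\frac{C}{R^{1-b/2}}\Big(\int|x|^{b}|\nabla u|^{2}\,dx\Big)^{1/2}\Big(\int|u|^{2}\,dx\Big)^{1/2}
=\frac{C}{R^{1-b/2}}\,\|\nabla u(t)\|_{b,2}\,\|u(t)\|_{2}.
\]
By conservation of mass $\|u(t)\|_{2}=\|u_{0}\|_{2}$, and by \eqref{d4} the constant $C_{1}:=\sup_{t\ge0}\|\nabla u(t)\|_{b,2}$ is finite, so $|g'(t)|\le C\,C_{1}\|u_{0}\|_{2}\,R^{-(1-b/2)}$.

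Integrating on $[0,t]$ yields $g(t)\le g(0)+C\,C_{1}\|u_{0}\|_{2}\,R^{-(1-b/2)}\,t$, so that for every $t\in[0,\eta R^{1-b/2}/(C_{1}C\|u_{0}\|_{2})]$ the last term is at most $\eta$, whence $\int_{|x|>R}|u(t)|^{2}\,dx\le g(t)\le o_{R}(1)+\eta$, which is the assertion. I do not expect a genuine obstacle here: the two points requiring care are the rigorous justification of the formula for $g'(t)$ (carried out as in Corollary \ref{cor2}) and the bookkeeping of constants so that the $C_{1}$ appearing in the time threshold is precisely $\sup_{t}\|\nabla u(t)\|_{b,2}$; the decisive step is the weighted Cauchy--Schwarz estimate converting the cutoff-gradient term into $R^{-(1-b/2)}\|\nabla u\|_{b,2}\|u\|_{2}$.
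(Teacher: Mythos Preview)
Your proof is correct and follows essentially the same approach as the paper: the paper also chooses a smooth cutoff $\psi_R(x)=\theta(x/R)$ with $\theta=0$ on $\{|x|\le\tfrac12\}$ and $\theta=1$ on $\{|x|\ge1\}$, applies the virial identity to obtain $I'_{\psi_R}(t)=2\,\mathrm{Im}\int|x|^{b}\nabla\psi_R\cdot\nabla u\,\overline{u}\,dx$, and then bounds $\||x|^{b/2}\nabla\psi_R\|_{L^\infty(|x|\ge R/2)}$ by $CR^{b/2-1}$ to reach the same $R^{-(1-b/2)}$ growth rate. The only cosmetic difference is that the paper cites its Lemma \ref{lem2.1} for the derivative formula rather than deriving it directly from the equation as you do.
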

\textbf{Proof of Lemma \ref{lem8}.} Let $\psi_R(x)=\theta(\frac{x}{R})$ be a radial function defined as
\begin{equation}\theta(x)=
\left\{
\begin{aligned}
 & 0,\quad\quad\quad\quad \mbox{if}\quad 0\leq |x|\leq \frac{1}{2},\\\nonumber
 & \mbox{smooth},\quad\ \mbox{if}\quad \frac{1}{2}<|x|<1,\\\nonumber
 & 1,\quad\quad\quad\quad  \mbox{if}\quad 1\leq|x|.
\end{aligned}\right.
\end{equation}
We note that there exists constant $C>0$ such that $|\nabla\theta|\leq C$.
Using \eqref{b26} in Lemma \ref{lem2.1}, $\nabla\psi_R=\frac{\nabla\phi_R}{|x|^b}$ and \eqref{d4}, we have
\begin{eqnarray}\nonumber
\lefteqn{I_{\psi_R}(t)=I_{\psi_R}(0)+\int_0^t\frac{d}{ds}I_{\psi_R}(s)ds}\\\nonumber
&&\quad\leq I_{\psi_R}(0)+\int_0^t |2\textmd{Im}\int_{|x|\geq\frac{R}{2}}|x|^b\nabla\psi_R\cdot\nabla u\overline{u}dx|ds\\\nonumber
&&\quad\leq I_{\psi_R}(0)+2t\||x|^{\frac{b}{2}}\nabla\psi_R\|_{L^\infty(|x|\geq\frac{R}{2})}\|\nabla u\|_{b,2}\|u\|_2\\\nonumber
&&\quad\leq I_{\psi_R}(0)+\frac{C_1C\|u_0\|_2t}{R^{1-\frac{b}{2}}}
\end{eqnarray}
for any $t\in [0, \infty)$, where $C_1:=\sup_{t\in[0, \infty)}\|\nabla u(t)\|_{b,2}$. For $u_0\in W_b^{1,2}$, one can see that
\[I_{\psi_R}(0)=\int\psi_R(x)|u_0|^2dx\leq\int_{|x|\geq\frac{R}{2}}|u_0|^2dx=o_R(1)\]
and
\[\int_{|x|> R}|u|^2dx\leq I_{\psi_R}(t).\]
Thus we conclude \eqref{d5}. The proof of Lemma \ref{lem8} is completed.\quad\quad\quad\quad $\square$

Next, we give the following important result, which shows that there exists no global solution whose $L^q(|x|^{\frac{c(q-2)}{p}}dx)$ norms are uniformly bounded in time.
\begin{lemma}\label{lem7}
Let $n\geq3,\ 2-n<b\leq0,\ V(x)\geq0$ and $x\cdot\nabla V\in L^{\frac{n}{2}}(|x|^{-\frac{nb}{2}}dx)$. Let $u\in C([0, T^\ast); W^{1,2}_b)$ be the corresponding solution to the INLS$_{b,V}$ with $u_0\in W_b^{1,2}$. If there exists $\delta>0$ such that the virial functional satisfies the bound
\begin{equation}\label{e16}
\sup_{t\in [0, T^\ast)}P(u)\leq-\delta,
\end{equation}
then there exists no global solution $u\in C([0, \infty); W^{1,2}_b)$ with
\begin{equation}\label{e6}
\sup_{t\in [0, \infty)}\|u(t)\|_{\frac{c(q-2)}{p},q}<\infty\quad\quad \mbox{for\ some}\ q>p+2.
\end{equation}
\end{lemma}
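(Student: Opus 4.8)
The plan is to argue by contradiction, following the localized virial scheme of Du-Wu-Zhang \cite{DWZ} adapted to the operator $\mathcal{A}_{b,V}$, with the weighted bound \eqref{e6} playing the role that the radial Strauss estimate plays in the proof of Lemma \ref{lem2}. Suppose the solution were global, $T^\ast=+\infty$, and that \eqref{e6} held, say $\mathcal{K}:=\sup_{t\geq0}\|u(t)\|_{\frac{c(q-2)}{p},q}<\infty$ for some $q>p+2$. Put $\theta:=\frac{p}{q-2}$; the hypothesis $q>p+2$ is exactly $\theta\in(0,1)$, and since $c=\theta\cdot\frac{c(q-2)}{p}$ and $p+2=2(1-\theta)+q\theta$, H\"{o}lder's inequality gives, for any measurable $\Omega\subseteq\mathbb{R}^n$,
\[
\int_{\Omega}|x|^c|u|^{p+2}\,dx\leq\Big(\int_{\Omega}|u|^2\,dx\Big)^{1-\theta}\Big(\int_{\Omega}|x|^{\frac{c(q-2)}{p}}|u|^q\,dx\Big)^{\theta}.
\]
With $\Omega=\mathbb{R}^n$ and mass conservation this yields $\|u(t)\|_{c,p+2}^{p+2}\leq M(u_0)^{1-\theta}\mathcal{K}^{q\theta}$, so by the energy conservation \eqref{f5} and $V\geq0$,
\[
\sup_{t\geq0}\|\nabla u(t)\|_{b,2}^2\leq 2E_{b,V}(u_0)+\tfrac{2}{p+2}M(u_0)^{1-\theta}\mathcal{K}^{q\theta}=:C_1^2<\infty .
\]
Hence \eqref{d4} holds, so Lemma \ref{lem8} applies: for every $\eta>0$ and $R>0$ one has $\int_{|x|>R}|u(x,t)|^2\,dx\leq o_R(1)+\eta$ for $t\in[0,T_R]$, where $T_R:=\frac{\eta R^{1-b/2}}{C_1C\|u_0\|_2}$.

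Next I would run the localized virial with a radial weight $\psi_R\geq0$ chosen so that $\psi_R(x)=\frac{2}{2-b}|x|^{2-b}$ on $|x|\leq R$, $\psi_R$ is constant on $|x|\geq2R$ with $0\leq\psi_R\leq CR^{2-b}$, and the associated $\phi_R$ (satisfying $\nabla\psi_R=\nabla\phi_R/|x|^b$) obeys $\phi_R(x)=|x|^2$ on $|x|\leq R$, $|\nabla\phi_R(x)|\leq2|x|$, and $\partial_r^2\phi_R\leq2$, $\partial_r\phi_R/r\leq2$ everywhere. Using the radial-weight identity \eqref{c3} (only $\phi_R$, not $u$, needs to be radial) and isolating the region $|x|\leq R$, where $\phi_R=|x|^2$, we reach the decomposition \eqref{e13}, $I''_{\psi_R}(t)=4(2-b)P(u)+\mathcal{R}_1+\mathcal{R}_2+\mathcal{R}_3+\mathcal{R}_4$, with all $\mathcal{R}_j$ supported in $|x|>R$. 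Decomposing $\nabla u$ into its radial and angular parts and invoking $\partial_r^2\phi_R\leq2$, $\partial_r\phi_R/r\leq2$ and $b\leq0$ --- the computation behind \eqref{c5}, which does not use radiality of $u$ --- gives $\mathcal{R}_1\leq0$. As in \eqref{d11} and \eqref{e17}, $\mathcal{R}_2\leq CR^{b-2}M(u_0)$ and $\mathcal{R}_4\leq C\|x\cdot\nabla V\|_{L^{n/2}(|x|>R;\,|x|^{-nb/2}dx)}C_1^2=o_R(1)$, using $x\cdot\nabla V\in L^{n/2}(|x|^{-nb/2}dx)$ (and only H\"{o}lder and the Hardy-Sobolev inequality \eqref{c26}, so no radiality). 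For the nonlinear remainder I would replace the Strauss step of Lemma \ref{lem2} by $|\mathcal{R}_3|\leq C\int_{|x|>R}|x|^c|u|^{p+2}\,dx$ together with the H\"{o}lder inequality above applied with $\Omega=\{|x|>R\}$ and $\int_{|x|>R}|x|^{\frac{c(q-2)}{p}}|u|^q\,dx\leq\mathcal{K}^q$, obtaining on $[0,T_R]$
\[
|\mathcal{R}_3(t)|\leq C\mathcal{K}^{q\theta}\Big(\int_{|x|>R}|u(t)|^2\,dx\Big)^{1-\theta}\leq C\mathcal{K}^{q\theta}\big(o_R(1)+\eta\big)^{1-\theta}.
\]

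Now fix $\eta>0$ so small that $C\mathcal{K}^{q\theta}(2\eta)^{1-\theta}\leq(2-b)\delta$, and then choose $R\geq R_\ast$ large enough that the $\mathcal{R}_2$, $\mathcal{R}_4$ terms and $o_R(1)$ are absorbed; by \eqref{e16} this gives $I''_{\psi_R}(t)\leq-2(2-b)\delta$ for all $t\in[0,T_R]$. Integrating twice and using $I_{\psi_R}\geq0$,
\[
(2-b)\delta\,T_R^2\leq I_{\psi_R}(0)+I'_{\psi_R}(0)\,T_R .
\]
The decisive point is that $u_0\in W_b^{1,2}$ is a fixed function, so by dominated convergence $I_{\psi_R}(0)=\int\psi_R|u_0|^2\,dx\leq C\int_{|x|\leq R}|x|^{2-b}|u_0|^2\,dx+CR^{2-b}\int_{|x|>R}|u_0|^2\,dx=o(R^{2-b})$ as $R\to\infty$; and from $I'_{\psi_R}(0)=2\,\mathrm{Im}\int\nabla\phi_R\cdot\nabla u_0\,\overline{u_0}\,dx$, $|\nabla\phi_R|\leq2|x|$ and Cauchy-Schwarz, $|I'_{\psi_R}(0)|\leq C\big(\int_{|x|\leq2R}|x|^{2+b}|\nabla u_0|^2\,dx\big)^{1/2}\big(\int_{|x|\leq2R}|x|^{-b}|u_0|^2\,dx\big)^{1/2}$, where the first factor is $o(R)$ (dominated convergence against $|x|^b|\nabla u_0|^2\in L^1$) and the second is $O(R^{-b/2})$, so $I'_{\psi_R}(0)=o(R^{1-b/2})$. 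Since $T_R=\tau R^{1-b/2}$ with $\tau:=\frac{\eta}{C_1C\|u_0\|_2}>0$ fixed, the displayed inequality reads $(2-b)\delta\,\tau^2R^{2-b}\leq o(R^{2-b})$, i.e.\ $(2-b)\delta\,\tau^2\leq o(1)$, which is impossible for $R$ large. This contradiction establishes the lemma.

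The step I expect to be the main obstacle is the control of the nonlinear remainder $\mathcal{R}_3$ in the absence of radial symmetry, where the exterior Strauss gain --- and hence the virial estimate of Lemma \ref{lem2} --- is unavailable. Hypothesis \eqref{e6} is precisely the ingredient that lets one trade $\mathcal{R}_3$ for a positive power of the exterior mass $\int_{|x|>R}|u|^2$, which Lemma \ref{lem8} keeps small on a time window of length $\sim R^{1-b/2}$; one must then balance the three scales --- the radius $R$, the window $T_R$, and the smallness $\eta$ --- and this is what forces the use of the fact that $I_{\psi_R}(0)$ and $I'_{\psi_R}(0)$ are $o(R^{2-b})$ and $o(R^{1-b/2})$ rather than merely $O$ of those quantities.
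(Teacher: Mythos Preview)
Your argument is correct and follows essentially the same route as the paper's proof: contradiction via the localized virial identity \eqref{e13}, with the H\"older interpolation (your $\theta=p/(q-2)$ corresponds exactly to the paper's $\vartheta_q$ via $\vartheta_q(p+2)=2(1-\theta)$) replacing the radial Strauss bound to control $\mathcal{R}_3$ by a power of the exterior mass, then Lemma~\ref{lem8} and a double time integration on $[0,T_R]$ with $T_R\sim R^{1-b/2}$. The only cosmetic difference is that you obtain $I_{\psi_R}(0)=o(R^{2-b})$ and $I'_{\psi_R}(0)=o(R^{1-b/2})$ by a clean dominated-convergence argument, whereas the paper splits at $|x|=\sqrt{R}$ to reach the same conclusion.
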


\begin{proof}
We assume by contradiction that there exists a global solution $u(t)$ to the INLS$_{b,V}$ such that
\begin{equation}\label{d3}
T^\ast=\infty\quad\quad \mbox{and}\quad\quad  C_0:=\sup_{t\in[0, \infty)}\|u(t)\|_{\frac{c(q-2)}{p},q}<\infty
\end{equation}
for some $q>p+2$. Then we claim that there exists a constant $0<C_1=C_1(C_0, E_b(u_0),$
$M(u_0))<\infty$ such that
\begin{equation}\label{g20}
C_1:=\sup_{t\in[0, \infty)}\|\nabla u(t)\|_{b,2}<\infty.
\end{equation}

To see it, we invoke the following weighted interpolation relationship
\begin{equation}\nonumber
\left(L^{p_0}(w_0^{p_0}dx),\ L^{p_1}(w_1^{p_1}dx)\right)_{\vartheta, \widetilde{q}}=L^{\widetilde{q}}(w_0^{(1-\vartheta)\widetilde{q}}w_1^{\vartheta\widetilde{q}}dx)
\end{equation}
cited in \cite{DF}, where $\frac{1}{\widetilde{q}}=\frac{1-\vartheta}{p_0}+\frac{\vartheta}{p_1}$ and $0<\vartheta<1$, which can be applied to give
\begin{equation}\label{e4}
\|u\|_{c,p+2}\leq C\|u\|^{1-\vartheta}_{\frac{c(q-2)}{p},q}\|u\|_2^{\vartheta}
\end{equation}
for all $q>p+2$. Since $V(x)\geq0$, then it follows from the energy conservation and \eqref{e4} that
\begin{eqnarray}\nonumber
\lefteqn{\|\nabla u\|_{b,2}\leq2E_{b,V}(u_0)+\frac{2}{p+2}\|u\|_{c,p+2}^{p+2}}\\\nonumber
&&\quad\quad\leq2E_{b,V}(u_0)+\frac{2}{p+2}\|u\|^{(1-\vartheta)(p+2)}_{\frac{c(q-2)}{p},q}\|u\|_2^{\vartheta(p+2)},
\end{eqnarray}
which together with \eqref{d3} yields the result \eqref{g20}.

Now we continue the argument of \eqref{e6}. Recalling the functional $I''_{\psi_R}(t)$ defined in \eqref{e13}, we claim that there exists $C=C(b, \textbf{p}_c, C_0, C_2)$ and $\vartheta_q>0$ such that the following nonradial version of the localized virial estimate
\begin{eqnarray}\nonumber
\lefteqn{I''_{\psi_R}(t)\leq 4(2-b)P(u)+C\|u\|_{L^2(|x|>R)}^{\vartheta_q(p+2)}}\\\label{e7}
&&\quad\quad\quad\ +\frac{C}{R^{2-b}}\|u\|_{L^2(|x|>R)}+C\|x\cdot\nabla V\|_{L^{\frac{n}{2}}(|x|>R;\ |x|^{-\frac{nb}{2}}dx)}
\end{eqnarray}
holds for $q>p+2$. 

Indeed, from \eqref{e13}, the estimates for $\mathcal{R}_1$ and $\mathcal{R}_2$ are the same as in \eqref{c5} and \eqref{d11} respectively, so it reduces to control the terms $\mathcal{R}_3$ and $\mathcal{R}_4$.

For  $\mathcal{R}_3$, invoking the interpolation inequality \eqref{e4} again, we have $\vartheta_q$ with $0<\vartheta_q=\frac{2(q-p-2)}{(q-2)(p+2)}<1$ such that
\begin{eqnarray}\nonumber
\lefteqn{\mathcal{R}_3\ \leq C_{p,c}\|u\|^{(1-\vartheta_q)(p+2)}_{L^q(|x|>R;\ |x|^{\frac{c(q-2)}{p}}dx)}\|u\|_{L^2(|x|>R)}^{\vartheta_q(p+2)}}\\\label{d9}
&&\leq C_{p,c}C_0^{(1-\vartheta_q)(p+2)}\|u\|_{L^2(|x|>R)}^{\vartheta_q(p+2)}.\quad\quad\quad\quad
\end{eqnarray}

For  $\mathcal{R}_4$, it follows from \eqref{e17} and \eqref{g20} that
\begin{equation}\label{e2}
\mathcal{R}_4\leq C_{b}C_1^2\|x\cdot\nabla V\|_{L^{\frac{n}{2}}(|x|>R;\ |x|^{-\frac{nb}{2}}dx)}.
\end{equation}
Thus, putting \eqref{c5}, \eqref{d11}, \eqref{d9}-\eqref{e2} all together, we conclude the estimate \eqref{e7}.

Finally, by \eqref{e16}, \eqref{e7} and Lemma \ref{lem8}, we have
\begin{eqnarray}\nonumber
\lefteqn{I''_{\psi_R}(s)\leq-4(2-b)\delta+o_R(1)+C\eta^{\frac{\vartheta_q(p+2)}{2}}+\frac{CM(u_0)^{\frac{1}{2}}}{R^{2-b}}+C\|x\cdot\nabla V\|_{L^{\frac{n}{2}}(|x|>R;\ |x|^{-\frac{nb}{2}}dx)}}\\\nonumber
&&\quad\ \leq-4(2-b)\delta+C\eta^{\frac{\vartheta_q(p+2)}{2}}+o_R(1)\quad\quad\quad\quad\quad\quad\quad\quad\quad\quad\quad\quad\quad\quad\quad\quad\quad\quad
\end{eqnarray}
for any $\eta>0, R>0$ and $s\in [0,\ \frac{\eta R^{1-\frac{b}{2}}}{C_1C_2\|u_0\|_2}]$.
We take $\eta=\eta_0>0$ sufficiently small such that
\[C\eta^{\frac{\vartheta_q(p+2)}{2}}\leq(4-2b)\delta,\]
which yields
\begin{equation}\label{d12}
I''_{\psi_R}(s)\leq-(4-2b)\delta+o_R(1).
\end{equation}

Now we set
\[T=T(R):=\alpha_0R^{1-\frac{b}{2}}=\frac{\eta_0R^{1-\frac{b}{2}}}{C_1C-2M(u_0)},\]
where $\alpha_0$ is independent of $R$. So, by integrating \eqref{d12} over $s\in[0,t]$ and then integrating over $t\in[0, T]$, one has
\begin{equation}\label{d15}
I_{\psi_R}(T)\leq I_{\psi_R}(0)+I'_{\psi_R}(0)\alpha_0R^{1-\frac{b}{2}}+\left(-(4-2b)\delta+o_R(1)\right)\alpha_0^2 R^{2-b}.
\end{equation}
To proceed it, we claim that for $R\rightarrow\infty$,
\begin{equation}\label{d14}
I_{\psi_R}(0)=o_R(1)R^{2-b},\quad\quad I'_{\psi_R}(0)=o_R(1)R^{1-\frac{b}{2}}.
\end{equation}

Indeed, by $\nabla\psi_R=\frac{\nabla\phi_R}{|x|^b}$, defined as \eqref{d13}, we deduce that
\begin{eqnarray}\nonumber
\lefteqn{I_{\psi_R}(0)\leq\frac{2}{2-b}\int_{|x|\leq\sqrt{R}}|x|^{2-b}|u_0|^2dx+\int_{\sqrt{R}\leq|x|\leq2R}(\int_0^x\frac{R^2\nabla\Theta(\frac{x}{R})}{|x|^b}dx)|u_0|^2dx}\\\nonumber
&&\quad\leq\frac{2}{2-b}R^{1-\frac{b}{2}}M(u_0)+CR^{2-b}\int_{|x|\geq\sqrt{R}}|u_0|^2dx=o_R(1)R^{2-b}.\quad\quad\quad\quad
\end{eqnarray}
On the other hand, we have
\begin{eqnarray}\nonumber
\lefteqn{I'_{\psi_R}(0)=4\textrm{Im}\int_{|x|\leq\sqrt{R}}x\cdot\nabla u_0\overline{u}_0dx+2\int_{\sqrt{R}\leq|x|\leq2R}R\partial_r\Theta(\frac{x}{R})\frac{x\cdot\nabla u_0}{r}\overline{u}_0dx}\\\nonumber
&&\quad\leq4\||x|^{1-\frac{b}{2}}\|_{L^\infty(|x|\leq\sqrt{R})}\|\nabla u_0\|_{b,2}\|u_0\|_2\\\nonumber
&&\quad\quad +2CR\||x|^{-\frac{b}{2}}\|_{L^\infty(\sqrt{R}\leq|x|\leq2R)}\|\nabla u_0\|_{b,2}\|u_0\|_{L^2(|x|\geq\sqrt{R})}\\\nonumber
&&\quad\leq4R^{\frac{2-b}{4}}\|\nabla u_0\|_{b,2}\|u_0\|_2+2CR^{1-\frac{b}{2}}\|\nabla u_0\|_{b,2}o_R(1)=o_R(1)R^{1-\frac{b}{2}},
\end{eqnarray}
which arrives at \eqref{d14}.

Hence, combining \eqref{d15} with \eqref{d14}, and choosing $R$ large enough, we infer that
\begin{equation}\label{g15}
I_{\psi_R}(T)\leq [-(4-2b)\delta\alpha_0^2+o_R(1)]R^{2-b}\leq -(2-b)\delta\alpha_0^2,
\end{equation}
which contradicts $I_{\psi_R}(T)=\int\psi_R(x)|u(x, T)|^2dx\geq0$.
Hence, we finish the proof of Lemma \ref{lem7}.

\end{proof}
With the above lemmas in hand, we give the proof of Theorem \ref{thmm1}.

\textbf{Proof of Theorem \ref{thmm1}.}
(i) Let $u_0\in \mathcal{N}^+$, it follows from Proposition \ref{lemm5} that the corresponding solution $u(t)\in \mathcal{N}^+$ for any $t\in[0, T^\ast)$. By the local well-posedness theory, we only need to control the $W_b^{1,2}$-norm of $u(t).$

To see it, we use Lemma \ref{lemma3} and $V\geq0$ to get
\[C_\omega\|u(t)\|^2_{W_b^{1,2}}\leq L_{\omega,V}(u(t))\leq\frac{2\textbf{p}_c}{\textbf{p}_c-4+2b}S_{\omega,V}(u(t))<Cm_{\omega,0}^{\alpha,\beta}\]
for all $t\in[0, T^\ast)$, where $C$ is a positive constant depending on $b$ and $\textbf{p}_c$. Thus we get $\|u(t)\|_{W_b^{1,2}}$ is bounded for any $t\in[0, T^\ast)$, and therefore $u(t)$ exists globally in $[0, \infty)$.

(ii) Let $u_0\in \mathcal{N}^-$, we obtain $u(t)\in \mathcal{N}^-$ by Proposition \ref{lemm5}. We now prove the blow-up results in two cases:

(1)  If $T^\ast<+\infty$, the local well-posedness theory implies that
$\lim_{t\rightarrow T^\ast}\|\nabla u(t)\|_{b,2}=\infty.$

(2) If $T^\ast=+\infty$. By Lemma \ref{lemma4}, there exists
\[\delta_1:=2(m_{\omega,0}^{n,2}-S_{\omega,V}(u_0))>0\]
such that
\[P(u(t))=\frac{1}{2-b}K_{\omega,V}^{n,2}(u(t))\leq-\delta_1\]
for all $t\in [0, T^\ast)$. Thus, Lemma \ref{lem7} implies that there exists a time sequence $\{t_n\}$ such that $t_n\rightarrow\infty$ and
\begin{equation}\label{e9}
\lim_{t_n\rightarrow \infty}\|u(t_n)\|_{\frac{c(q-2)}{p},q}=\infty
\end{equation}
for any $q>p+2$. For such $u(t_n)$, we use the Hardy-Sobolev inequality in Lemma \ref{lem11} to get
\begin{equation}\label{g26}
\|u(t_n)\|_{\frac{c(q-2)}{p},q}\leq C_b\|\nabla u(t_n)\|_{b,2}
\end{equation}
for all $2(2-b)<\textbf{p}_c\leq\frac{2c(2-b)}{b}$. Therefore, combining \eqref{e9} with \eqref{g26}, we obtain
\[\lim_{n\rightarrow \infty}\|\nabla u(t_n)\|_{b,2}=\infty.\]
The proof of Theorem \ref{thmm1} is completed.
 \quad\quad\quad $\square$

Finally, we give the proof of Corollary \ref{thm4}.

\textbf{Proof of Corollary \ref{thm4}.}  From Theorem \ref{thmm1} (ii), it is sufficient to show that \eqref{g40} implies that
$u_0\in\mathcal{N}^-.$

Firstly, we claim that the following two conditions are equivalent:
\begin{equation}\label{k4}
E_{b,V}(u_0)M(u_0)^\sigma<E_{b}(Q_{1})M(Q_{1})^\sigma\quad \Leftrightarrow\quad S_{\omega,V}(u_0)<m_{\omega,0}.
\end{equation}

Indeed, by Proposition \ref{prop1}, we obtain $m_{\omega,0}=S_{\omega,0}(Q_{\omega})$,
where $Q_{\omega}(x)$ is the ground state of the elliptic equation \eqref{c1}. So for $\omega>0$, we write
\[f(\omega):=S_{\omega,0}(Q_{\omega})-S_{\omega,V}(u_0).\]
Let $Q_{\omega}=\omega^{\frac{2-b+c}{(2-b)p}}Q_{1}(\omega^{\frac{1}{2-b}}x)$, one can easily check that $Q_{1}(x)$ satisfies the equation \eqref{c1} with $\omega=1$.
This implies that
\[S_{\omega,0}(Q_{\omega})=\omega^{\frac{(2-b)(p+2)-\textbf{p}_c}{(2-b)p}}S_{1,0}(Q_{1}).\]
By rewriting
\begin{equation}\label{g10}
f(\omega)=\omega^{\frac{(2-b)(p+2)-\textbf{p}_c}{(2-b)p}}S_{1,0}(Q_{1})-S_{\omega,V}(u_0),
\end{equation}
we conclude that the claim \eqref{k4} is valid if and only if $\sup_{\omega>0}f(\omega)>0$.

Since
\[f'(\omega)=\frac{(2-b)(p+2)-\textbf{p}_c}{(2-b)p}\omega^{\frac{2(2-b)-\textbf{p}_c}{(2-b)p}}S_{1,0}(Q_{1})-\frac{1}{2}M(u_0),\]
we solve $f'(\omega_0)=0$ and obtain
\[\omega_0=\left(\frac{(2-b)p}{2(2-b)(p+2)-2\textbf{p}_c}\frac{M(u_0)}{S_{1,0}(Q_{1})}\right)^{\frac{(2-b)p}{2(2-b)-\textbf{p}_c}}>0.\]
A simple calculation shows that the function $f$ has a maximum value at $\omega=\omega_0$.
Therefore, the statement $S_{\omega,V}(u_0)<m_{\omega,0}$ holds if and only if $f(\omega_0)>0$.

Substituting $\omega_0$ into \eqref{g10}, we have
\begin{eqnarray}\nonumber
\lefteqn{f(\omega_0)=\left(\frac{2(2-b)(p+2)-2\textbf{p}_c}{(2-b)p}\right)^{\frac{(2-b)p}{\textbf{p}_c-4+2b}}\frac{\textbf{p}_c-4+2b}{2(2-b)(p+2)-2\textbf{p}_c}}\\\nonumber
&&\quad\quad\times\frac{S_{1,0}(Q_{1})^{\frac{(2-b)p}{\textbf{p}_c-4+2b}}}{M(u_0)^{\frac{(2-b)(p+2)-\textbf{p}_c}{\textbf{p}_c-4+2b}}}-E_{b,V}(u_0).\quad\quad\quad\quad\quad\quad\quad\quad\quad\quad
\end{eqnarray}
Thus it follows from $f(\omega_0)>0$ that
\begin{eqnarray}\nonumber
\lefteqn{\left(\frac{2(2-b)(p+2)-2\textbf{p}_c}{(2-b)p}\right)^{\frac{(2-b)p}{\textbf{p}_c-4+2b}}\frac{\textbf{p}_c-4+2b}{2(2-b)(p+2)-2\textbf{p}_c}S_{1,0}(Q_{1})^{\frac{(2-b)p}{\textbf{p}_c+2b-4}}}\\\label{g11}
&&\quad\quad\quad\quad\quad\quad\quad\quad\quad>E_{b,V}(u_0)M(u_0)^\sigma,\quad\quad\quad\quad\quad\quad\quad\quad\quad\quad\quad\quad\quad\quad
\end{eqnarray}
where $\sigma$ is defined in \eqref{k6}.

Recalling the Pohozaev identities \eqref{a3} related to $Q_{1}$, we deduce that
\[S_{1,0}(Q_{1})=\frac{(2-b)p}{2\textbf{p}_c}\|\nabla Q_{1}\|^2_{b,2}\]
and
\[E_{b}(Q_{1})M(Q_{1})^\sigma=\left(\frac{(2-b)(p+2)}{\textbf{p}_c}-1\right)^{\frac{(2-b)(p+2)-\textbf{p}_c}{\textbf{p}_c-4+2b}}\frac{\textbf{p}_c-4+2b}{2\textbf{p}_c}\|\nabla Q_{1}\|_{b,2}^{\frac{2(2-b)p}{\textbf{p}_c-4+2b}},\]
which yields that
\[\mbox{L.H.S.\ of}\ \eqref{g11}= E_{b}(Q_{1})M(Q_{1})^\sigma.\]
Hence, we obtain $E_{b}(Q_{1})M(Q_{1})^\sigma> E_{b,V}(u_0)M(u_0)^\sigma.$

Next, we need to show that \eqref{g40} implies that
 \begin{equation}\label{g41}
K_{\omega,V}^{n,2}(u_0)<0.
\end{equation}

To this end, by the definition of the functional $P$ and \eqref{k1}, we get
\begin{eqnarray}\nonumber
\lefteqn{P(u_0)\leq\|\nabla u_0\|^2_{b,2}+\int V(x)|u_0|^2dx-\frac{\textbf{p}_c}{(p+2)(2-b)}\|u_0\|^{p+2}_{c,p+2}}\\\nonumber
&&\quad=\frac{\textbf{p}_c}{2-b}E_{b,V}(u_0)-\left(\frac{\textbf{p}_c}{4-2b}-1\right)\|u_0\|^2_{\dot{H}_{b, V}^1}\\\nonumber
&&\quad<\frac{\textbf{p}_c}{2-b}\frac{E_{b}(Q_{1})M(Q_{1})^\sigma}{M(u_0)^\sigma}-\left(\frac{\textbf{p}_c}{4-2b}-1\right)\frac{\|\nabla Q_{1}\|^2_{b,2}\|Q_{1}\|_2^{2\sigma}}{\|u_0\|_2^{2\sigma}},
\end{eqnarray}

so it follows from the Pohozaev's identities and \eqref{g40} that $P(u_0)<0,$ and then \eqref{g41} is proved.

Finally, combining \eqref{k4} with \eqref{g41}, we conclude that $u_0\in\mathcal{N}^-$.  Corollary \ref{thm4} is proved.\quad\quad\quad\quad\quad\quad $\square$

\vspace{4mm}

\subsection*{Acknowledgments}
This work is supported by the National Natural Science Foundation of China (Grant No. 12401147).

\subsection*{Competing Interests}
The authors have no conflicts to disclose.


\begin{thebibliography}{60}


\bibitem{AKJ} {\sc J. An, J. Kim, R. Jang}, {\em Global existence and blow-up for the focusing inhomogeneous nonlinear Schr\"{o}dinger equation with inverse-square potential}, Discrete Contin. Dyn. Syst., 28(2) (2023), pp.~1046--1067.


\bibitem{CKN} {\sc L. Caffarelli, R. Kohn, L. Nirenberg}, {\em  First order interpolation inequalities with weights}, Compos. Math., 53 (1984), pp.~259--275.
\bibitem{CH} {\sc D. Cao, P. Han}, {\em Inhomogeneous critical nonlinear Schr\"{o}dinger equations with a harmonic potential}, J. Math. Phys., 51 (2010), pp.~043505.

\bibitem{TC} {\sc T. Cazenave}, {\em Semilinear Schr\"{o}dinger Equations}, Courant Lecture Notes in Mathematics,  vol.10, New York University, Courant Institute of Mathematical Sciences, New York; American Mathematical Society, Providence, RI, 2003.
\bibitem{DLY} {\sc Y. Deng, Y. Li, F. Yang}, {\em On the positive radial solutions of a class of singular semilinear elliptic equations}, J. Differ. Equations, 253 (2012), pp.~481--501.
\bibitem{VDD2} {\sc V.D. Dinh}, {\em Global dynamics for a class of inhomogeneous nonlinear Schr\"{o}dinger equations with potential}, Math. Nachr., 294(4) (2021), pp.~672--716.
\bibitem{D5} {\sc V.D. Dinh}, {\em Blowup of $H^1$ solutions for a class of the focusing inhomogeneous nonlinear Schr\"{o}dinger equation}, Nonlinear Anal., 174 (2018), pp.~169--188.
\bibitem{DK} {\sc V.D. Dinh, S. Keraani}, {\em Long time dynamics of non-radial solutions to inhomogeneous nonlinear Schr\"{o}dinger equations}, SIAM J. Math. Anal., 53(4) (2021), pp.~4765--4811.
\bibitem{DWZ} {\sc D. Du, Y. Wu, K. Zhang}, {\em On blow-up criterion for the nonlinear Schr\"{o}dinger equations}, Discrete Contin. Dyn. Syst., 36 (2016), pp.~3639-3650.
\bibitem{F4} {\sc L.G. Farah}, {\em Global well-posedness and blow-up on the energy space for the inhomogeneous nonlinear Schr\"{o}dinger equation}, J. Evol. Equ., 16(1) (2016), pp.~193--208.
\bibitem{DF} {\sc D. Freitag}, {\em Real interpolation of weighted $L^p$-spaces}, Math. Nachr., 86 (1978), pp.~15--18.
\bibitem{G5} {\sc F. Genoud}, {\em An inhomogeneous, $L^2$-critical, nonlinear Schr\"{o}dinger equation}, Z. Anal. Anwend., 31(3) (2012), pp.~283--290.
\bibitem{GS4} {\sc F. Genoud, C.A. Stuart}, {\em Schr\"{o}dinger equations with a spatially decaying nonlinearity: existence and stability of standing waves}, Discrete Contin. Dyn. Syst., 21(1) (2008), pp.~137--186.
\bibitem{GC4} {\sc C.M. Guzman}, {\em On well posedness for the inhomogeneous nonlinear Schr\"{o}dinger equation}, Nonlinear Anal. Real World Appl., 37 (2017), pp.~249--286.
\bibitem{GG} {\sc R.T. Glassey}, {\em On the blowing up of solutions to the Cauchy problem for nonlinear Schr\"{o}dinger equations}, J. Math. Phys., 18 (1976), pp.~1794--1797.

\bibitem{G3} {\sc Z. Guo, X. Guan, F. Wan}, {\em Existence and regularity of positive solutions of a degenerate elliptic problem}, Math. Nachr., 292 (2019), pp.~56--78.
\bibitem{GGW} {\sc Z. Guo, X. Guan, F. Wan}, {\em Sobolev type embedding and weak solutions with a prescribed singular set}, Sci. China Math., 59(10) (2016), pp.~1975--1994.
\bibitem{G4} {\sc Z. Guo, Y. Li, F. Wan}, {\em Asymptotic behavior at the isolated singularities of solutions of some equations on singular manifolds with conical metrics}, Commun. Part. Diff. Eq., 45 (2020), pp.~1647--1681.

\bibitem{GWY} {\sc Q. Guo, H. Wang, X. Yao}, {\em Scattering and blow-up criteria for 3d cubic focusing nonlinear inhomogeneous NLS with a potential}, arXiv.org/abs/1801.05165.
\bibitem{GI} {\sc S. Gustafson, T. Inui}, {\em Scattering and blow-up for threshold even solutions to the nonlinear Schr\"{o}dinger equation with repulsive delta potential at low frequencies}, Journal of Differential Equations, 412 (2024), pp.~758-796.
\bibitem{HI} {\sc M. Hamano, M. Ikeda}, {\em Global dynamics below the ground state for the focusing Schr\"{o}dinger equation with a potential}, J. Evol. Equ., 20(3) (2020), pp.~1131--1172.
\bibitem{H1} {\sc Y. Hong}, {\em Scattering for a nonlinear Schr\"{o}dinger equation with a potential}, Commun. Pure Appl. Anal., 15 (2016), pp.~1571--1601.
\bibitem{IMN} {\sc S. Ibrahim, N. Masmoudi, K. Nakanishi}, {\em Scattering threshold for the focusing nonlinear Klein-Gordon equation}, Anal. PDE, 4 (2011), pp.~405--460.
\bibitem{II} {\sc M. Ikeda, T. Inui}, {\em Global dynamics below the standing waves for the focusing semilinear Schr\"{o}dinger equation with a repulsive Dirac delta potential}, Anal. PDE, 10(2) (2017), pp.~481--512.

\bibitem{KMVZ} {\sc R. Killip, J. Murphy, M. Visan, J. Zheng}, {\em The focusing cubic NLS with inverse-square potential in three space dimensions}, Differ. Integral Equ., 30 (2017), pp.~161--206.

\bibitem{M} {\sc R. Musina}, {\em Ground state solutions of a critical problem invovling cylindrical weights}, Nonlinear Anal., 68 (2008), pp.~3972--3986.

\bibitem{SC} {\sc Y. Shen, Z. Chen}, {\em Nonlinear degenerate elliptic equation with Hardy potential and critical parameter}, Nonlinear Anal., 69 (2008), pp.~1462--1477.
\bibitem{SW2} {\sc N. Shioji, K. Watanabe}, {\em Uniqueness and nondegeneracy of positive radial solutions of $div(\rho\nabla u)+\rho(-gu+hu^p)=0$}, Calculus Var. Partial Differ. Equations, 55 (2016), pp.~32.

\bibitem{WW} {\sc Z. Wang, M. Willem}, {\em Caffarelli-Kohn-Nirenberg inequalities with remainder terms, J. Funct. Anal., 203} (2003), pp.~550--568.
\bibitem{ZYS} {\sc Y. Zhang, J. Yang, Y. Shen}, {\em Solutions for nonlinear elliptic equations with general weight in the Sobolev-Hardy space}, P. Am. Math. Soc., 139(1) (2011), pp.~219--230.

\bibitem{ZOZ} {\sc B. Zheng, T. Ozawa, J. Zhai}, {\em Blow-up solutions for a class of divergence Schr\"{o}dinger equations with intercritical inhomogeneous nonlinearity}, J. Math. Phys., 64 (2023), pp.~011502.
\bibitem{ZO} {\sc B. Zheng, T. Ozawa}, {\em The blow-up dynamics for the divergence Schr\"{o}dinger equations with inhomogeneous nonlinearity}, preprint.
\bibitem{ZZ1} {\sc B. Zheng, W. Zhu}, {\em Strong instability of standing waves for the divergence Schr\"{o}dinger equation with inhomogeneous nonlinearity}, J. Math. Anal. Appl., 530 (2024), pp.~127730.





\end{thebibliography}
\end{document}